\documentclass{amsart}

\usepackage{geometry}
\usepackage{amsmath}
\usepackage{amssymb}
\usepackage{amsthm}
\usepackage{color}
\usepackage[shortlabels]{enumitem}
\usepackage{hyperref}

\usepackage{hyperref}

\newtheorem{prop}{Proposition} 

\newtheorem{lemma}{Lemma}
\newtheorem{thm}{Theorem}
\newtheorem{defn}{Definition}

\numberwithin{equation}{section}

\usepackage[dvipsnames]{xcolor}

\begin{document}

\begin{abstract}
Polymath 8b~\cite{Polymath:2014:VSS} proved that $H_1= \liminf_{n \rightarrow \infty} (p_{n+1}-p_n) \leq 246$.  In this paper we show how the Bombieri-Vinogradov theorem  can be combined  with newer equidistribution estimates for smooth moduli to obtain the improved bound   $H_1 \leq 240$.  
\end{abstract}

\title{ bounded gaps between primes}
\date{}
\author{Julia Stadlmann}
\maketitle

\vspace{-5mm} 

\section{Introduction}\label{sec:intro}

\vspace{3mm} In this paper we study  $H_1= \liminf_{n \rightarrow \infty} (p_{n+1}-p_n)$, the smallest gap between consecutive primes which appears infinitely often.  The twin prime conjecture suggests that $H_1=2$, but only since  Zhang's breakthrough~\cite{Zhang:2015:BGP} in 2013 is it  known that $H_1$ is finite. Zhang's bound $H_1 \leq 70000000$ has since been improved several times. In particular, Maynard~\cite{Maynard:2015:SGP} showed that $H_1 \leq  600$, and the Polymath project~\cite{Polymath:2014:VSS} improved this to the current best known bound $H_1 \leq  246$. A core ingredient of Zhang's result were his newly proved equidistribution estimates for primes in arithmetic progressions with smooth moduli. However, Maynard's new sieve weights later allowed for the use of the older and   simpler Bombieri-Vinogradov theorem~\cite{Maynard:2015:SGP}. Polymath's bound $H_1 \leq 246$ also only uses the Bombieri-Vinogradov theorem and no newer equidistribution estimates~\cite{Polymath:2014:VSS}.

\medskip

More generally, one may consider $H_m=\liminf_{n \rightarrow \infty} (p_{n+m}-p_n)$, the gap between $m+1$ consecutive primes.   Maynard~\cite{Maynard:2015:SGP} first showed that $H_m$ is bounded, giving estimate $H_m \ll m^3 \exp(4m)$, and  Tao obtained a similar bound around the same time. The Polymath project~\cite{Polymath:2014:VSS} improved this bound to  $H_m \ll \exp((4-\frac{28}{157})m)$. Baker and Irving~\cite{Baker:2017:BIP} first introduced Harman's sieve into this problem, giving  $H_m \ll \exp(3.815 m)$, which we improved to $H_m \ll \exp(3.8075 m)$~\cite{Stadlmann:2023:PAP}. Concrete upper bounds for $H_2, \dots, H_5$ are also known, see~\cite{Polymath:2014:VSS} and \cite{Stadlmann:2023:PAP}.
 The best known upper bounds on $H_m$ for $m\geq 2$ all use equidistribution estimates of Zhang type. 
 
 \medskip 
 Equidistribution estimates of Zhang type give an exponent of distribution larger than $\frac{1}{2}$,  improving on the exponent in the  Bombieri-Vinogradov theorem, but  only apply to a limited choice of moduli, like  $x^{\delta}$-smooth numbers.   When applying the GPY method with Maynard-Tao sieve weights, a larger exponent of distribution corresponds to a bigger function support in the relevant optimization problem, whereas a restricted choice of moduli also restricts the support. 
 In Polymath's approach to proving upper bounds on $H_m$~\cite{Polymath:2014:VSS}, the gains from having a larger exponent of distribution outweigh the losses from being restricted to  $x^{\delta}$-smooth numbers when $m\geq 2$, but not  when $m=1$. 
 
 \medskip The aim of this paper is to combine the function support given by the Bombieri-Vinogradov theorem with that of newer equidistribution estimates, so that we may   benefit from the advantages of both, and improve the bound on $H_1$. We prove the following:

\begin{thm} \label{thm}
The quantity $H_1=\liminf_{n \rightarrow \infty} (p_{n+1}-p_n)$ satisfies $ H_m \leq 240$. 
\end{thm}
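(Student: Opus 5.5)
We follow the Maynard--Tao/Polymath framework. For an admissible $k$-tuple $\mathcal H=\{h_1,\dots,h_k\}$ of diameter $240$ (Polymath 8b exhibits admissible $50$-tuples of diameter $246$; we need a $k$ with an admissible $k$-tuple of diameter at most $240$, which means $k=49$ or a suitably chosen $k\le 50$ tuple) we consider
\[
S=\sum_{N\le n<2N}\Big(\sum_{i=1}^k \mathbf 1_{\mathcal P}(n+h_i)-1\Big)w_n,
\]
with sieve weights $w_n$ built from a smooth function $F$ on a region $\mathcal R\subset[0,\infty)^k$. If $S>0$ for large $N$, two of the $n+h_i$ are prime and $H_1\le 240$. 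Using Polymath's asymptotics, $S>0$ follows once the ratio $\sum_i J_i(F)/I(F)$ exceeds $4$ (after normalising the level $x^{1/2}$ from Bombieri--Vinogradov). The only new ingredient is the choice of region: instead of the simplex $\{t_1+\dots+t_k\le 1\}$ (or Polymath's $\epsilon$-enlarged version), we take the union of the Bombieri--Vinogradov simplex with a larger region where the moduli are restricted to be $x^{\delta}$-smooth, on which a Zhang-type estimate (e.g.\ Polymath 8a's $MPZ[\varpi,\delta]$ with $\varpi$ near $7/300$) gives level $x^{1/2+2\varpi}$.

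The key steps, in order, are as follows. First, we decompose the weights as $w_n=(\sum_j \lambda^{(j)}_d)^2$ with pieces $\lambda^{(1)}$ supported on $d\le x^{1/4-\epsilon}$ (for Bombieri--Vinogradov) and $\lambda^{(2)}$ supported on smooth $d$ up to $x^{1/4+\varpi}$. Second, we expand the square. The diagonal terms are controlled by the respective equidistribution theorems. The cross terms involve products of moduli of the form $[d_1,d_2]$ with one factor smooth and one not, so the level must be bounded by that of the weaker theorem in a compatible way. To handle these cross terms we would construct $\mathcal R$ so that every pairwise sum of support points lies in a region covered by one of the two estimates, and where necessary we would use a generalised Bombieri--Vinogradov bound for moduli $qr$ with $r$ smooth. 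Third, since smoothness of $d$ is a condition on the prime factorisation, we would use Polymath's treatment of the restriction to $x^\delta$-smooth moduli: either the condition is automatic once all prime factors are small, or we lose only $O(\delta)$ in the functionals as $\delta\to 0$. Fourth, the $J_i$ (prime) terms need equidistribution of primes weighted by $\lambda$ with one coordinate removed, so the marginal regions must also satisfy the support constraints. This is where Polymath's ``$\epsilon$-trick'' lower bounds on the truncated $J_i$ would be reused.

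The final step, which we expect to be the main obstacle, is numerical. We need an explicit polynomial (or piecewise polynomial) $F$ on the combined region with $\sum_i J_i/I>4$ for the chosen $k$, which would be verified by a quadratic-program/eigenvalue computation as in Polymath 8b. The difficulty is twofold. The region must be designed so that the cross-term constraints do not eat the gain: the gain over Polymath's $M_{k}$ values near $4$ is tiny, since $k=50$ gave $M\approx 4.0043$. And the resulting integrals over non-simplex regions must be computed exactly or rigorously. We would first try a region of the form $\{\sum t_i\le 1\}\cup\{\sum t_i\le 1+4\varpi,\ \max t_i\le\eta\}$, with symmetric polynomial $F$ in the power-sum variables, and tune $\eta,\varpi$ numerically.
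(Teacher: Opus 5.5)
Your outline has the right overall shape: enlarge the support beyond the Bombieri--Vinogradov simplex using a Zhang-type level, keep the $\epsilon$-trick for the $J_i$, and verify $k=49$ numerically. The gap is the step you treat as a detail, which is where the proof actually lives. Your claim that ``the only new ingredient is the choice of region'' is where the proposal breaks.

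\begin{itemize}
\item With a support that is a union of the Bombieri--Vinogradov simplex and an extended region, the cross terms produce moduli $\prod_i[d_i,d_i']$ of size up to about $x^{1/2+\varpi}$. Such a modulus can contain a prime factor as large as $x^{1/4}$ coming from the simplex side.
\item These moduli are beyond Bombieri--Vinogradov and are not $x^{\delta}$-smooth. In general they are also not $i$-tuply $x^\delta$-densely divisible in the sense Polymath's $MPZ^{(i)}[\varpi,\delta]$ requires. So no design of the region inside such a union makes ``every pairwise sum covered by one of the two estimates''.
\item Your fallback, a ``generalised Bombieri--Vinogradov bound for moduli $qr$ with $r$ smooth'' at level beyond $x^{1/2}$, is not an available theorem. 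It is precisely the new analytic input the paper has to prove.
\item You also cannot sidestep it by imposing smoothness or coordinatewise truncation on the whole support. The loss from that restriction grows, rather than shrinks, as $\delta\to 0$, so your ``$O(\delta)$'' claim is backwards. As the paper notes, for $m=1$ this loss outweighs the gain from the larger level, which is why Polymath's $246$ uses Bombieri--Vinogradov alone.
\end{itemize}

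The paper supplies the missing estimate in Section~\ref{sec:equiest}.
\begin{itemize}
\item It reruns the dispersion arguments of Polymath~8a, Baker--Irving's Type~I estimate and the author's earlier estimate for smooth moduli. The only hypothesis kept is that the modulus has divisors in a few prescribed ranges (Lemma~\ref{lem:theorem58} and Lemmas~\ref{lem:typeIIPoly}--\ref{lem:typeIII}).
\item To avoid needing further factorizations, it replaces the $q$-van der Corput bound by the second inequality of Corollary~4.16 (Lemma~\ref{lem:cor416}).
\item It constrains the support not by $\max_i t_i\le\eta$ but by $\sum_{i\in I}t_i\le B_{|I|}$ for $I=\{i:t_i>\delta\}$. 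This forces every modulus above $x^{1/2}$ to have a large $x^\delta$-smooth part.
\item The partition Lemmas~\ref{lem:partition1}--\ref{lem:partition3} and Proposition~\ref{prop:tupleconditions} turn that smooth part into the required divisors.
\end{itemize}

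In the actual proof the constraint is imposed on the whole support, not on a union with the full simplex. The parameters are $n=1$, $A_1=0.253$, $\varepsilon=0.0075$, $\delta=0.028$, $B_{1,1}=B_{1,2}=0.15$ and $B_{1,m}=0.17$ for $m\ge 3$. This gives $\omega=0.003$, a regime of tiny level gain with comparatively large $\delta$, far from the endpoint of Polymath~8a's range that you proposed. The choice $\xi_2=0.4$ in Proposition~\ref{prop:harman} makes $\rho=1_{\mathbb{P}}$. The matrix entries are computed exactly by the recursion and matrix-multiplication scheme of Section~\ref{sec:code}.

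Without an estimate of the Section~\ref{sec:equiest} type, your second and third steps have no valid equidistribution input for the moduli above $x^{1/2}$.
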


The value $246$ corresponds to the length of the shortest admissible $50$-tuple, while $240$ is the length of the shortest admissible $49$-tuple. The bound $H_1 \leq 240$ is therefore the smallest possible improvement. However, while Polymath~\cite{Polymath:2014:VSS} used symmetric polynomials of degree $\leq 27$ to achieve $H_1 \leq 246$, we only needed polynomials of degree $\leq 21$ to get $H_1 \leq 240$. As such,  we believe that  our method could achieve bigger improvements, but due to limited  resources and limited time, we have decided to present this modest improvement as a proof of concept.

\subsection{Proof strategy} By using   equidistribution estimates for moduli with a large smooth factor, we can choose a considerably larger support for the functions used in the GPY optimization problem, giving better results in the corresponding eigenvalue computation. Our strategy is   outlined below.

 \medskip
 
 First we need to determine a suitable function support. On the one hand, this support should capture many tuples to which the available equidistribution estimates are applicable, but on the other hand it should be simple enough that we can efficiently integrate over the chosen  region.  Our basic idea is to consider a union $S=S_{BV} \cup S_{Z}$ of the following sets: 
 \begin{align*} 
 &S_{BV}=\left\{(t_1, \dots, t_k) \in [0,1]^k: t_1+\dots+t_k \leq \frac{1}{4}\right\}, \\
 &S_{Z}(\delta,\omega,\underline{b})=\left\{(t_1, \dots, t_k) \in [0,1]^k:    t_1+\dots+t_k \in \left[ \frac{1}{4},  \frac{1}{4}+\omega \right] \mbox{ and }  \sum_{i \in I} t_i \leq b_{|I|} \mbox{ for }  I=\{i: t_i > \delta\} \right\}.
 \end{align*}
 To decide whether $S$ is a suitable support, we must ask if we have the necessary  equidistribution estimates for all moduli $m = d_1^{t_1} \dots d_k^{t_k} e_1^{s_1} \dots e_k^{s_k}$ with $((t_1, \dots, t_k), (s_1, \dots, s_k)) \in S \times S$. For elements of $S_{BV} \times S_{BV}$ we can simply apply the   Bombieri-Vinogradov theorem. On the other hand, the condition $\sum_{i \in I} t_i \leq b_{|I|}$ ensures that elements of $(S \times S) \setminus (S_{BV} \times S_{BV})$ have a large $x^\delta$-smooth factor. This $x^\delta$-smooth factor is convenient for applying equidistribution estimates, provided we choose the bounds $ \underline{b} = (b_1, b_2, \dots)$ suitably small. 
 (To increase the support further, we will also apply the epsilon-enlargement trick of Polymath~\cite{Polymath:2014:VSS}. Additionally, to restrict the choice of sequences for which  equidistribution estimates  must apply, we will    implement Harman's sieve.)
 
 \medskip However, while the moduli corresponding to $(S \times S) \setminus (S_{BV} \times S_{BV})$ have a reasonably large  $x^\delta$-smooth factor, they are not necessarily $x^\delta$-smooth themselves. In particular, moduli coming from $S_{BV} \times S_Z$ may have a prime factor of size up to $x^{1/4}$, meaning they are not   even $i$-tuply $x^\delta$-densely divisible and thus neither the equidistribution estimates of Polymath~\cite{Polymath:2014:EDZ} nor those proved by the author in~\cite{Stadlmann:2023:PAP} are directly applicable to the chosen support. As such, we must prove variations of the equidistribution estimates of~\cite{Polymath:2014:EDZ} and~\cite{Stadlmann:2023:PAP} in which the restrictions on the moduli have been considerably relaxed.  
 
 \medskip  Having chosen a support and proved suitable equidistribution estimates, we need to solve the  optimization problem produced by the GPY method with Maynard-Tao sieve weights. This involves maximizing a ratio of integrals, and can be transformed into an eigenvalue computation for a matrix whose entries are determined by these integrals. Here one needs to choose a basis of functions to integrate. To prove $H_1 \leq 246$, Polymath~\cite{Polymath:2014:VSS} used   symmetric polynomials of the form $ \cdot p(t_1, \dots, t_k)^2 (1-t_1-\dots-t_k)^b$, where $p(t_1, \dots, t_k)$ is itself a symmetric polynomial of degree $a$ and where $2a+b \leq 27$. To show the strength of our method, we will use the same kind of basis to prove $H_1 \leq 240$, but with the stricter condition $2a+b \leq 21$. This means we get an improved result despite working with a significantly smaller basis. 
 
 \medskip However, trying to integrate symmetric polynomials over $S=S_{BV} \cup S_Z$ causes significant complications: We need to obtain exact integrals as even minor errors in the matrix entries can quickly lead to incorrect results. However, trying to use Mathematica to integrate a polynomial in $49$ variables over a region like this seems pretty hopeless: even for small degree polynomials and 10 variables a computation can already take 10 minutes or more, and the duration appears to increase exponentially, so that we would never get any result for $49$ variables. Therefore we need to find a better way to integrate polynomials over $S$. 
 
 \medskip Some of the key ideas of our integral computation are as follows: We first consider sets 
 \begin{align*} 
 &T_s(k)=\{(t_1, \dots, t_k) \in [0,\delta]^k: t_1+\dots+t_k \leq 1\}, \\ 
 &T_b(k)=\{(t_1, \dots, t_k) \in [\delta,1]^k: t_1+\dots+t_k \leq 1\} 
  \end{align*} 
 and polynomials $ t_1^{a_1} \dots t_k^{a_k} (1-t_1-\dots-t_k)^b$. The integrals of $ t_1^{a_1} \dots t_k^{a_k} (1-t_1-\dots-t_k)^b$ over $T_s(k)$ and $T_b(k)$ can be expressed as polynomials in $\delta$, with coefficients only dependent on the value of $\lfloor \frac{1}{\delta} \rfloor$. To compute these coefficients we find a recursion which relates an integral over $T_s(k)$ to integrals of polynomials of equal or lower degree over $T_s(k-1)$ and $T_b(k-1)$. We then also integrate over more complicated sets, like for instance $T_m=\{(t_1, \dots, t_r, t_{r+1}, \dots, t_k) \in [\delta,1]^r\times [0,\delta]^{k-r} : t_1+\dots+t_k \leq 1\}$. Such integrals can be expressed as   matrix multiplications,   using the coefficient vectors for $T_s(k)$ and $T_b(k)$ produced in the previous step. The actual integral  over $S=S_{BV} \cup S_Z$ is then split up into integrals over scaled $T_m$, so that we can use the results of the previous steps. 
 
 \medskip Overall, our calculation of the needed matrix entries involves no integration at all, it only requires matrix multiplication. However, many very large matrices need to be multiplied together, and so despite a lot of  effort  to speed up our code, it can still take   days to compute all necessary matrix entries for symmetric polynomials of degree $2a+b \leq 21$. Our code also uses a lot of memory, a trade off from focusing on computation speed. Due to these time and memory constraints, it was not practical for us  to consider symmetric polynomials of degree  $2a+b \leq 27$.  As such,  we believe that  with greater resources the same method could be used to get a better bound  on $H_1$.

\subsection{Structure of the paper} 

In Section~\ref{sec:gpysummary} we carry out the key steps of the GPY method with Maynard-Tao sieve weights and  Polymath's epsilon-enlargement trick. Our arguments  follow Polymath's approach~\cite{Polymath:2014:VSS}, but many adjustments need to be made   to deal with a more complicated function support and account for the use of  Harman's sieve. 

\smallskip

In Section~\ref{sec:equiest} we record the equidistribution estimates used in our proof. Although the relevant estimates are  closely based on~\cite{Polymath:2014:EDZ} and~\cite{Stadlmann:2023:PAP}, the moduli restrictions in these papers are too strong, and we need to prove relaxed conditions on the moduli. 

\smallskip

The function support used in Section~\ref{sec:gpysummary} is  quite general and involves many different parameters.   Dependent on these parameters, the  equidistribution estimates of Section~\ref{sec:equiest} may or may not be good enough to cover all relevant moduli. In Section~\ref{sec:domain} we show how to calculate good parameters for which the available  equidistribution estimates   work well enough.

\smallskip In Section~\ref{sec:code} we briefly discuss how integrals over the chosen support can be expressed in terms of matrix multiplication and recursion.  

\smallskip In Section~\ref{sec:proof} we combine the results of the previous sections to complete the proof of Theorem~\ref{thm}.

\subsection{Notation}  Below is a list of notation and terminology used throughout this paper.

\medskip
In this paper, lower case Roman letters  usually denote integers, and letters $p$ and $p_i$ denote primes.

 $1_S$ is the indicator function of set $S$ and  $1_{\mathbb{P}}$ is the prime indicator function. 

 $\mu(n)$ is the Möbius function and $\phi(n)$ is Euler's totient function.  $\tau(n)$ is the number of divisors of $n$.

\medskip
We say a positive integer $n $ is  $y$-smooth if all its prime factors are of size less than $y$. 

We write $P(y)= \prod_{p <y} p$. If $n \mid P(y)$, then $n$ is $y$-smooth and squarefree.

 \medskip
 We   use Vinogradov  notation ($\ll$ and $\gg$) and write $X \asymp Y$ if $Y \ll X \ll Y$. If we write $X \ll_A Y$, then the implied constant may depend on $A$.  

\section{The gpy sieve }\label{sec:gpysummary}

In this section we present a version of the GPY method. The GPY method was first developed by Goldston, Pintz and Yıldırım~\cite{Goldston:2009:PTI}, who proved that $\liminf_{n \rightarrow \infty} \frac{p_{n+1}-p_n}{\log p_n}=0$. Since then, it has been used in all proofs of upper bounds on $H_1$, from Zhang's proof of $H_1 \leq 70000000$~\cite{Zhang:2015:BGP} to Polymath's proof of $H_1 \leq 246$~\cite{Polymath:2014:VSS}. We will use the GPY sieve weights developed by Maynard in~\cite{Maynard:2015:SGP}, and a modification of   Polymath's epsilon enlargement trick with a new function support. Additionally, like Baker and Irving~\cite{Baker:2017:BIP}, we will use a minorant for the primes, constructed via Harman's sieve.

\subsection{Important definitions}

 Before we   state an amended version of the GPY sieve, we need to introduce some   definitions. We first define the function support we  use  to construct   Maynard-Tao sieve weights.

 \begin{defn}[Simplex subset]\label{def:intregion} Let $k \in \mathbb{N}$,   $\delta >0$ and $\varepsilon>0$. Let $n \in \mathbb{N}$ and $\underline{A}=(A_0,A_1, \dots,A_n)$ with  $$-\varepsilon =A_0 <  \dots < A_n <\frac{1}{2}-\varepsilon.$$ 
 Let $\underline{B}$ be a $n \times \lfloor \frac{1}{\delta} \rfloor$ matrix with  $(j,m)th$ entry equal $B_{j,m}$. Matrix  $\underline{B}$ must satisfy   that for any $m \geq 1$,  $$\delta<B_{j,m} \leq B_{j,m+1} \leq B_{j,m} + \delta.$$
 
  Then $T_k(\delta,\underline{A},\underline{B},\varepsilon)$ is defined as follows:
\begin{align*}
T_k(\delta,\underline{A},\underline{B},\varepsilon)=\bigcup_{j=1}^n \left\{(t_1, \dots, t_k) \in [0,1]^k:     \sum_{i=1}^k t_i \in \left[A_{j-1}+\varepsilon,  A_j+\varepsilon \right) \mbox{ and }  \sum_{i \in I} t_i \leq B_{j,|I|} \mbox{ for }  I=\{i: t_i > \delta\} \right\}.
\end{align*}
 \end{defn}
Specific values for parameters $A_j$ and $B_{j,m}$ will be chosen at a  later stage, once we have derived  equidistribution estimates for moduli with large $x^\delta$-smooth factors. 
 The condition $\delta<B_{j,m} \leq B_{j,m+1} \leq B_{j,m} + \delta$
 is quite important: If $\sum_{i \in I} t_i \leq B_{j,|I|} \mbox{ for }  I=\{i: t_i > \delta\}$, this condition implies that also $\sum_{i \in I'} t_i \leq B_{j,|I'|}$ for any $I' \subseteq I$.  This property will be used in several proofs in this section. 
 
 \medskip
 Next we define a set of corresponding moduli for which we need equidistribution estimates. 
 
 \begin{defn}[Relevant moduli]\label{def:neededmoduli}
Let $x >0$ and $\varepsilon_0>0$. Let $T_k(\delta,\underline{A},\underline{B},\varepsilon)$ be as given in Definition~\ref{def:intregion}. 

Further let $Q(x;\delta,\underline{A},\underline{B},\varepsilon,j,j',m,m',\varepsilon_0)$ be the following set of integers in $[1,x]$:
\begin{align*}\left\{ e e' \prod_{i=1}^m f_i \prod_{i'=1}^{m'} f'_{i'}    \in [1,x]: 
\begin{array}{ll}
  \log_x(f_1 \dots f_m) \leq (1-\varepsilon_0)B_{j,m}, & \log_x(f'_1 \dots f'_{m'}) \leq  (1-\varepsilon_0)B_{j',m'}, \\
  \log_x(f_1 \dots f_m e) \leq (1-\varepsilon_0)(A_j-\varepsilon), &  \log_x(f'_1 \dots f'_{m'} e') \leq (1-\varepsilon_0)(A_{j'}+\varepsilon), \\
ee' \mbox{ is } x^\delta\mbox{-smooth}, & \log_x(f_i), \log_x(f'_i) \geq  \delta \mbox{ for all } i
\end{array}\right\}
\end{align*}
We then say the moduli corresponding to region $T_k(\delta,\underline{A},\underline{B},\varepsilon)$ are the set 
\begin{align*}
Q^*(x;\delta,\underline{A},\underline{B},\varepsilon,\varepsilon_0) = \bigcup_{j=1}^n \bigcup_{j'=1}^n \bigcup_{m=0}^{\lfloor \frac{1}{\delta} \rfloor }
\bigcup_{m'=0}^{\lfloor \frac{1}{\delta} \rfloor } Q(x;\delta,\underline{A},\underline{B},\varepsilon,j,j',m,m',\varepsilon_0).
\end{align*} 
 \end{defn}
 
 Unlike in Polymath's proof, we will not always work directly with primes in arithmetic progressions. Instead we can replace the prime indicator function with a suitable minorant $\rho(n) \leq 1_{\mathbb{P}}(n)$, constructed via Harman's sieve, which satisfies an equidistribution statement of the type described below.
 
\begin{defn}[Equidistribution]\label{def:equidist}
Let  $T_k(\delta,\underline{A},\underline{B},\varepsilon)$ be as given in Definition~\ref{def:intregion} and let 
$Q^*(x;\delta,\underline{A},\underline{B},\varepsilon,\varepsilon_0)$ be as given in Definition~\ref{def:neededmoduli}.

We say that a function $f: \mathbb{N} \times (1,\infty) \rightarrow \mathbb{C}$ has equidistribution properties for  moduli corresponding to $T_k(\delta,\underline{A},\underline{B},\varepsilon)$ provided that the following holds: For  any $x>1$, $\varepsilon_0>0$ and  $A>0$, and for any $a \in \mathbb{Z}$ with $(a,p)=1$ for all primes $p \leq x$, we have
\begin{align*}
\sum_{\substack{q \in Q^*(x;\delta,\underline{A},\underline{B},\varepsilon,\varepsilon_0) \\  q \mbox{ \scriptsize  is squarefree  }   }} \Bigg| \sum_{\substack{n \in [x,2x] \\ n \equiv a (q)}} f(n;x) - \dfrac{1}{\phi(q)} \sum_{\substack{n \in [x,2x] \\(n,q)=1 }}f(n;x) \Bigg| \ll_{A,\varepsilon_0} \dfrac{x}{\log(x)^A}.
\end{align*}
\end{defn}

Next we write down the GPY sieve with a minorant of the prime indicator function. Recall here that $(h_1, \dots, h_k)$ is an admissible $k$-tuple if $h_1 < \dots < h_k$ are integers which avoid at least one residue class modulo $p$ for each prime $p$.  

\begin{defn}[GPY sieve]\label{def:gpysieve}
Let $(h_1, \dots, h_k)$ be an admissible $k$-tuple. 

Let $W= \prod_{p \leq \log\log\log x} p$ and choose $b \in \mathbb{N}$ with  $(b+h_i,W)=1$ for all $i \in \{1,\dots,k\}$. 

Let $\rho: \mathbb{N} \times (1,\infty) \rightarrow \mathbb{R}$ with $\rho(n;x) \leq 1_{\mathbb{P}}(n)$ for all large $x$ and $n \in [x,2x]$.

\medskip

For given $L \in \mathbb{N}$, $c_1, \dots, c_L \in \mathbb{R}$ and  smooth compactly supported functions $F_{l,i}:[0,\infty) \rightarrow \mathbb{R}$, we define the following sieve weights:
\begin{align*}
\nu(n)=\left(\sum_{l=1}^L c_l \lambda_{F_{l,1}}(n+h_1) \dots \lambda_{F_{l,k}}(n+h_k) \right)^2  \quad \mbox{ with  }  \quad \lambda_F(n)=\sum_{d \mid n } \mu(d) F(\log_x(d)).
\end{align*}

For the chosen weight function $\nu: \mathbb{N} \rightarrow [0,\infty)$, we then define the GPY sieve function
\begin{align*}
&N(x;\rho,\nu)=  \sum_{\substack{x \leq n \leq 2x \\  n \equiv b(W) }} \nu(n) \left( \sum_{i=1}^k \rho(n+h_i;x) - 1 \right).
\end{align*}
\end{defn}
If $N(x;\rho,\nu) >0$ then there must exist some $n_0 \in [x,2x]$  for which $\sum_{i=1}^k \rho(n_0+h_i;x) > 1 $. But then    $(n_0+h_1, \dots, n_0+h_k)$ must contain at least two primes. Thus $N(x;\rho,\nu) > 0$ for large $x$ would imply that $H_1 \leq h_k - h_1$. The difficulty is thus  to find a choice of $F_{l,i}$ (and thus a choice of $\nu$) for which we can prove the inequality $N(x;\rho,\nu) > 0$. This can be reduced to an optimization problem involving integrals:

\begin{defn}[Key integrals]\label{def:int}
Let $T_k(\delta,\underline{A},\underline{B},\varepsilon)$ be as given in Definition~\ref{def:intregion}. Let $F$ be a symmetric, square-integrable function supported on $T_k(\delta,\underline{A},\underline{B},\varepsilon)$. Then   define
\begin{align*} 
&I(F;\delta,\underline{A},\underline{B},\varepsilon)=\mathop{\int \dots \int}_{T_k(\delta,\underline{A},\underline{B},\varepsilon)} F(t_1, \dots, t_k)^2 \mbox{d}t_1 \dots \mbox{d}t_k, \\
&J(F;\delta,\underline{A},\underline{B},\varepsilon)=\sum_{m=1}^n\sum_{m'=1}^n\mathop{\int \dots \int}_{\substack{t_1+\dots+t_{k-1} \leq \max\{A_m-\varepsilon,A_{m'}-\varepsilon\} \\ t_1+\dots+t_{k-1}+t_k \in [A_{m-1}+\varepsilon,A_m+\varepsilon] \\  t_1+\dots + t_{k-1}+t'_k \in [A_{m'-1}+\varepsilon,A_{m'}+\varepsilon]  }} \!\!\!\!\!\!\!\!\!\!\!\!\!\!\!\!\!\!\!\!\!\!\!\!\! F(t_1,\dots, t_{k-1}, t_k)  F(t_1, \dots, t_{k-1}, t'_k)\mbox{d}t_1 \dots   \mbox{d}t_{k-1}\mbox{d}t_k\mbox{d}t'_k,\\
&K(F;\delta,\underline{A},\underline{B},\varepsilon)=\sum_{m=1}^n\sum_{m'=1}^n\mathop{\int \dots \int}_{\substack{ t_1 + \dots + t_{k-1} > \max\{A_m-\varepsilon,A_{m'}-\varepsilon\}\\ t_1+\dots+t_{k-1}+t_k \in [A_{m-1}+\varepsilon,A_m+\varepsilon] \\  t_1+\dots + t_{k-1}+t'_k \in [A_{m'-1}+\varepsilon,A_{m'}+\varepsilon]  }} F(t_1, \dots, t_k)^2 \mbox{d}t_1 \dots \mbox{d}t_k.
\end{align*} 
\end{defn}

\subsection{Main sieve statement}

\medskip We are now ready to state the main result of this section: 
\begin{prop}\label{prop:GPYsieve}
Let  $T_k(\delta,\underline{A},\underline{B},\varepsilon)$ be as given in Definition~\ref{def:intregion}. Suppose that function $\rho: \mathbb{N} \times (1,\infty) \rightarrow \mathbb{R}$ and constants $c_1$ and $c_2$ have the following properties:
\begin{enumerate}[{\rm(1)}]
\item  Prime minorant: $-c_2 \leq \rho(n;x) \leq 1_{\mathbb{P}}(n)$ for all large $x$ and $n \in [x,2x]$.  
\item  $\rho$ has  equidistribution properties for  moduli corresponding to $T_k(\delta,\underline{A},\underline{B},\varepsilon)$. $($See Definition~\ref{def:equidist}.$)$
\item If $\rho(n;x) \neq 0$, then all prime factors of $n$ exceed $x^\beta$, for some $\beta > \max\{ B_{j,1}: 1\leq j \leq n \}$.  
\item The sum of $\rho(n;x)$ over $[x,2x]$ satisfies
$\sum_{n \in [x,2x]} \rho(n;x) = (1-c_1 +o(1)) \frac{x}{\log(x)}$.
\end{enumerate}
Further let $I(F;\delta,\underline{A},\underline{B},\varepsilon)$,  $J(F;\delta,\underline{A},\underline{B},\varepsilon)$ and $K(F;\delta,\underline{A},\underline{B},\varepsilon)$ be as given in Definition~\ref{def:int}. Suppose there exists a symmetric, square-integrable function $F$ supported on $T_k(\delta,\underline{A},\underline{B},\varepsilon)$ with 
\begin{align}\label{inequ:intmain}
\frac{k(1-c_1)  J(F;\delta,\underline{A},\underline{B},\varepsilon)-k c_2   K(F;\delta,\underline{A},\underline{B},\varepsilon)}{I(F;\delta,\underline{A},\underline{B},\varepsilon)} > 1.
\end{align}
Then $H_1 = \liminf_{n \rightarrow \infty} (p_{n+1}-p_n) \leq H(k)$, where $H(k)$ is the diameter of the shortest admissible $k$-tuple.
\end{prop}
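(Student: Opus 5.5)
We follow Polymath's proof of the epsilon-enlargement GPY theorem, with $1_{\mathbb{P}}$ replaced by $\rho$. First, by a standard approximation argument (Stone--Weierstrass), we reduce to $F$ of the form $F=\sum_{l=1}^L c_l \prod_{i=1}^k F_{l,i}(t_i)$ with smooth $F_{l,i}$. The support must be placed slightly inside $T_k(\delta,\underline{A},\underline{B},\varepsilon)$, shrunk by a factor $(1-\varepsilon_0)$. The ratio in \eqref{inequ:intmain} is continuous in $L^2$, so strict inequality is preserved. We also decompose $F=\sum_{j=1}^n F^{(j)}$, where $F^{(j)}$ is supported on the $j$-th layer $\sum t_i\in[A_{j-1}+\varepsilon,A_j+\varepsilon)$. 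This matches the double sums over $m,m'$ in $J$ and $K$. We then set $\nu$ as in Definition~\ref{def:gpysieve} and expand $N(x;\rho,\nu)=\sum_i S_2^{(i)}-S_1$, where $S_1=\sum\nu(n)$ and $S_2^{(i)}=\sum\nu(n)\rho(n+h_i;x)$.

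\textbf{The sum $S_1$.} Expanding the square gives a sum over $d_i,e_i$ with $[d_i,e_i]\mid n+h_i$ of products of $\mu$ and $F$-values. Since $\prod_i d_i\le x^{\sum t_i}$ with $\sum t_i<1/2-\varepsilon+\varepsilon<1/2$, the total modulus is below $x^{1-\epsilon'}$. The standard Fourier/contour computation of Maynard and Polymath then gives
\[
S_1=(1+o(1))\,\frac{B^{-k}x}{W}\, I(F)
\]
with $B=\phi(W)\log x/W$. No equidistribution input is needed here, only a trivial count in residue classes.

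\textbf{The sums $S_2^{(k)}$: the main term.} Fix $i=k$ by symmetry. The condition $d_k,e_k\mid n+h_k$ combined with $\rho(n+h_k)\ne 0$ is where property (3) is used. It forces $d_k=e_k=1$ whenever $\log_x d_k\le B_{j,1}<\beta$, apart from the small divisors $d_k\le x^{\delta}$, which cannot divide an $n$ free of prime factors below $x^\beta$. Hence only tuples with $t_k$ large and $d_k>1$ could survive. I expect these to be handled by the support and by the $K$-term, not by the main term.

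The main term is then $\sum_{d,e}\lambda_d\lambda_e\sum_{n\equiv a\,(q)}\rho(n+h_k)$ with $q=W\prod_{i<k}[d_i,e_i]$. Here $(d_1,\dots,d_{k-1})$ lies in the projection of the $j$-th layer and $(e_1,\dots,e_{k-1})$ in the $j'$-th layer. We replace the inner sum by $\frac{1}{\phi(q)}\sum\rho$ using property (2). The key check is that every such $q$ lies in $Q^*$. Write $d_1\cdots d_{k-1}=f_1\cdots f_m e$, where the $f$'s are the prime factors above $x^\delta$ and $e$ is $x^\delta$-smooth. Then the constraint $\sum_{I}t_i\le B_{j,|I|}$, applied to the subset $I'$ of large coordinates, gives $\log_x(f_1\cdots f_m)\le (1-\varepsilon_0)B_{j,m}$, by the monotonicity property of $\underline{B}$ noted after Definition~\ref{def:intregion}.

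The other constraint concerns the cutoff $\sum_{i<k} t_i\le A_j-\varepsilon$ (respectively $A_{j'}+\varepsilon$). This is exactly the region $t_1+\dots+t_{k-1}\le\max\{A_m-\varepsilon,A_{m'}-\varepsilon\}$ used in $J$, following Polymath's enlargement trick. With one factor bounded by $A_m-\varepsilon$ and the other by $A_{m'}+\varepsilon$, the total is at most $1/2$. Property (4) then yields the main term
\[
(1-c_1+o(1))\frac{B^{-k}x}{W}J(F).
\]

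\textbf{The sums $S_2^{(k)}$: the remaining region.} On the region $t_1+\dots+t_{k-1}>\max\{A_m-\varepsilon,A_{m'}-\varepsilon\}$ we have no equidistribution. There we use only the lower bound $\rho\ge -c_2$ and drop the primality information. Writing $\rho\ge -c_2$ and using $\nu\ge0$, we bound this part below by $-c_2\sum\nu(n)$ restricted to the corresponding part of $F$. By the same computation as for $S_1$, this equals $-c_2\frac{B^{-k}x}{W}K(F)$.

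To make this rigorous, we split $\lambda$ using a smooth partition of $\sum_{i<k}t_i$ into the good and bad ranges, in Polymath's manner. The mixed cross terms between good and bad pieces must be controlled. I expect them either to be absorbed via Cauchy--Schwarz into $J$ and $K$, or to be bounded directly because $\nu\ge0$ lets us split $\rho=\rho^+-\rho^-$. This decomposition, ensuring that the lower bound has exactly the shape $kJ(1-c_1)-kc_2K$, is the main technical obstacle. The inclusion of the moduli in $Q^*$ is the other delicate verification.

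Summing over $i$ gives
\[
N>\frac{B^{-k}x}{W}\bigl(k(1-c_1)J-kc_2K-I-o(1)\bigr)>0,
\]
so some $n$ has two primes among the $n+h_i$. Choosing the $h_i$ to be an admissible $k$-tuple of minimal diameter then gives $H_1\le H(k)$.
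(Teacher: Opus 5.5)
Your outline matches the paper in its main steps: layer decomposition, smooth tensor approximation, $S_1$ via Polymath's asymptotic, $d_k=e_k=1$ from property (3), membership in $Q^*$ via the monotonicity of $\underline{B}$, and $\rho\ge -c_2$ on the bad part. However, the step you yourself call the main obstacle is a genuine gap, and neither of your suggested fixes works.

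Write $\nu=(L+U)^2$, where $L$ collects the pieces of the weight whose support in $(t_s)_{s\neq i}$ lies in $\sum_{s\neq i}t_s<(1-\varepsilon_0)(A_j-\varepsilon)$, and $U$ collects the rest. The cross term $2LU\rho(n+h_i;x)$ has no sign. Splitting $\rho=\rho^+-\rho^-$ therefore does not let you drop or bound it. Cauchy--Schwarz would replace it by a loss like $-\eta L^2|\rho|-\eta^{-1}U^2|\rho|$, which gives a different and weaker functional than $k(1-c_1)J-kc_2K$.

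The missing idea is that the cross term is \emph{evaluated}, not bounded. The moduli set $Q$ of Definition~\ref{def:neededmoduli} is deliberately asymmetric. It requires $\log_x(f_1\cdots f_me)\le(1-\varepsilon_0)(A_j-\varepsilon)$ for only one of the two sieve factors, and allows $(1-\varepsilon_0)(A_{j'}+\varepsilon)$ for the other. So hypothesis (2) applies to $\sum_n LU\rho$ just as it does to $\sum_n L^2\rho$. This is Lemma~\ref{lem:asymptotics}, which assumes only that one factor lies in the good region.

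The paper then uses the pointwise bound $\nu\rho\ge L^2\rho+2LU\rho-c_2U^2$, which needs only $U^2\ge 0$ and $\rho\ge -c_2$. It evaluates the first two terms asymptotically and the last by Polymath's Theorem~3.6. The factor $2$ on the $\mathcal{U}\times\mathcal{L}$ pairs is exactly what produces the region $\sum_{i<k}t_i\le\max\{A_m-\varepsilon,A_{m'}-\varepsilon\}$ in $J$.

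Your proposed smooth partition in $\sum_{i<k}t_i$ would also destroy the tensor structure needed for the weights $\lambda_f$. Instead, Lemma~\ref{lem:sieveweights} does the following:
\begin{enumerate}[(i)]
\item it approximates each layer's antiderivative $f_{3,j}$ by products of functions supported on intervals of length at most $\varepsilon_3$;
\item it sorts each product into $\mathcal{L}(j,i)$ or $\mathcal{U}(j,i)$;
\item it loses only $O(\varepsilon_3)$ from pieces straddling the boundary.
\end{enumerate}

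There are also some smaller points.
\begin{itemize}
\item Nothing with $d_k>1$ survives, so the $K$-term is not needed for it. If $t_k>B_{j,1}$ (which exceeds $\delta$), then $k\in I$ and $\sum_{i\in I}t_i>B_{j,1}+(|I|-1)\delta\ge B_{j,|I|}$, so such points lie outside the support. Property (3) gives $\log_x d_k>\beta>B_{j,1}$ for every $d_k>1$ dividing $n+h_k$ with $\rho(n+h_k;x)\neq 0$. Hence $d_k=e_k=1$ identically in every $\rho$-term.
\item Your remark that the total modulus is at most $x^{1/2}$ is false here, since $A_j$ may exceed $\frac14$. In the application, $(A_1-\varepsilon)+(A_1+\varepsilon)=0.506$. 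This is why hypothesis (2) is needed rather than Bombieri--Vinogradov.
\item Hypothesis (2) is stated for a fixed class $a$, whereas $a_{W,d_1,\dots,d'_{k-1}}$ varies with the $d_i$. The paper bridges this by averaging over the set $\mathcal{A}$ of admissible classes and applying Cauchy--Schwarz with divisor bounds.
\item The tensor approximation must be applied to the antiderivative $f$ with $\partial_{t_1}\cdots\partial_{t_k}f=\pm F$, not to $F$ itself. This is because $\lambda_{f_i}$ produces $\int f_i'g_i'$ and $f_i(0)g_i(0)$.
\end{itemize}
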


\subsection{Construction of sieve weights} 

We first show how a function $F$ satisfying~(\ref{inequ:intmain}) is used to construct sieve weights $\nu(n)$. Our arguments   follow the steps of Section~5 of Polymath~\cite{Polymath:2014:VSS}, adapted to the new support $T_k(\delta,\underline{A},\underline{B},\varepsilon)$. 

\begin{lemma}\label{lem:sieveweights} Let  $T_k(\delta,\underline{A},\underline{B},\varepsilon)$ be as described in Definition~\ref{def:intregion}. Let $\varepsilon_0 >0$ and define region
\begin{align*}
&R_k^+(\delta,\underline{A},\underline{B},\varepsilon,j,\varepsilon_0)=  \left\{(t_1, \dots, t_k) \in [0,1]^k:    \begin{array}{l} \sum_{i=1}^k t_i  < (1-\varepsilon_0)(A_j+\varepsilon), \\  \sum_{i \in I} t_i <(1-\varepsilon_0) B_{j,|I|} \mbox{ for }  I=\{i: t_i > \delta\}     
\end{array}\right\}.
\end{align*}

Suppose there exists a symmetric, square-integrable function $F$ supported on $T_k(\delta,\underline{A},\underline{B},\varepsilon)$ with 
\begin{align*}
\frac{k (1-c_1) J(F;\delta,\underline{A},\underline{B},\varepsilon) - kc_2 K(F;\delta,\underline{A},\underline{B},\varepsilon)}{I(F;\delta,\underline{A},\underline{B},\varepsilon)} > 1.
\end{align*}
If $\varepsilon_0>0$ is chosen sufficiently small, we can then  find integers $L_1, \dots, L_n$, constants $c_{j,l}$    and   smooth compactly supported functions $f_{j,l,i}: [0,\infty) \rightarrow \mathbb{R}$  with the following two properties:
\begin{enumerate}[{\rm(1)}]
\item For every  $j \in \{1, \dots, n\}$ and $l \in \{1, \dots, L_j\}$, the support of  product $f_{j,l,1}(t_1) \dots f_{j,l,k}(t_k)$ is contained in $  R_k^+(\delta,\underline{A},\underline{B},\varepsilon,j,\varepsilon_0)$.
\item  Denoting by $\mathcal{L}(j,i)$  the set of $ l \in  \{1, \dots, L_j\}$ for which the  support of  product $\prod_{s\neq i} f_{j,l,s}(t_s) $ is contained in $\{(t_1, \dots, t_{i-1},t_{i+1}, \dots, t_k): \sum_{s \neq i} t_i < (1-\varepsilon_0)(A_j-\varepsilon)\}$, and denoting by    $\mathcal{U}(j,i)$  its   complement $\{1, \dots, L_j\} \setminus \mathcal{L}(j,i)$, 
the terms 
\begin{align*}
\mathcal{I}&=\sum_{j=1}^n\sum_{l=1}^{L_j} \sum_{j'=1}^n \sum_{l'=1}^{L_{j'}} c_{j,l} c_{j',l'}  \prod_{s=1}^k
\int_0^\infty    f'_{j,l,s}(t_s)f'_{j',l',s}(t_s) \, \mbox{d}t_s, \\
\mathcal{J}_i&=\sum_{j=1}^n \sum_{l \in  \mathcal{L}(j,i)}  \sum_{j'=1}^n  \sum_{l' \in  \mathcal{L}(j',i)}  c_{j,l}c_{j',l'} f_{j,l,i}(0)  f_{j',l',i}(0)  \prod_{s\neq i} \mathop{\int_0^\infty   }  f'_{j,l,s}(t_s)     f'_{j',l',s}(t_s)   \, \mbox{d}t_s \\
&+2\sum_{j=1}^n \sum_{l \in  \mathcal{U}(j,i)}  \sum_{j'=1}^n  \sum_{l' \in  \mathcal{L}(j',i)} c_{j,l}c_{j',l'} f_{j,l,i}(0)  f_{j',l',i}(0)  \prod_{s\neq i} \mathop{\int_0^\infty   }  f'_{j,l,s}(t_s)     f'_{j',l',s}(t_s)   \, \mbox{d}t_s\\
\mathcal{K}_i&=\sum_{j=1}^n \sum_{l \in  \mathcal{U}(j,i)}  \sum_{j'=1}^n  \sum_{l' \in  \mathcal{U}(j',i)}  c_{j,l}c_{j',l'}   \prod_{s=1}^k \mathop{\int_0^\infty} f'_{j,l,s}(t_s)     f'_{j',l',s}(t_s)    \, \mbox{d}t_s
\end{align*}
satisfy the inequality 
\begin{align*}
\frac{(1-c_1) \sum_{i=1}^k  \mathcal{J}_i - c_2 \sum_{i=1}^k  \mathcal{K}_i}{ \mathcal{I}} > 1.
\end{align*}
\end{enumerate}

\end{lemma}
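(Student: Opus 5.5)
The plan is a density-and-continuity argument in the style of Section~5 of Polymath~\cite{Polymath:2014:VSS}. First, split $F=\sum_{j=1}^n F_j$ with $F_j=F\cdot 1_{\{\sum t_i\in[A_{j-1}+\varepsilon,A_j+\varepsilon)\}}$. Each $F_j$ is symmetric and supported on the $j$th piece of $T_k(\delta,\underline{A},\underline{B},\varepsilon)$. Since $I,J,K$ are quadratic forms in $F$, and the ratio in the hypothesis strictly exceeds $1$, a small $L^2$ perturbation of $F$ preserves the inequality. I would therefore approximate each $F_j$ by a function $G_j$ that is smooth and supported on a slightly shrunken region, namely the closure of $R_k^+(\delta,\underline{A},\underline{B},\varepsilon,j,\varepsilon_0)$ scaled by a factor $1-\varepsilon_0'$ with $\varepsilon_0'>\varepsilon_0$. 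The condition $\delta<B_{j,m}\le B_{j,m+1}\le B_{j,m}+\delta$ matters here. It makes the region $\{\sum_{i\in I}t_i\le B_{j,|I|}\text{ for }I=\{i:t_i>\delta\}\}$ downward closed, since shrinking a coordinate below $\delta$ removes it from $I$ and the constraint for $I'\subseteq I$ still holds. So the dilation $t\mapsto(1-\varepsilon_0')t$ maps the region into itself, and dilation is continuous in $L^2$. That gives the approximation as $\varepsilon_0'\to 0$.

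Second, I would write each $G_j$ as a finite linear combination of products $\prod_s g_{j,l,s}(t_s)$, using Stone--Weierstrass on a compact set with each factor smooth and compactly supported. I then set $f_{j,l,s}(t)=-\int_t^\infty g_{j,l,s}$, or rather choose $f$ with $f'=g$ up to sign, so that $f(0)$ is the integral of $g$. This is the usual change from $F$ to the derivative form: the quantity $\mathcal{I}$ corresponds to $\int (\sum_j G_j)^2$, $\mathcal{J}_i$ to the $t_i$-marginal squared, and $\mathcal{K}_i$ to the part where the remaining coordinates are too large. The delicate point is property (1). The products must have support inside $R_k^+$, not merely their sum. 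To arrange this, I would approximate separately on each of the finitely many pieces where the set $I=\{i:t_i>\delta\}$ is fixed. On the interior of such a piece the constraints are open conditions on boxes, so one can cover by small boxes contained in $R_k^+$ and use a smooth partition of unity subordinate to these boxes. Each box is a product set, so the resulting products have support in the region. Near the hyperplanes $t_i=\delta$, downward closedness of the region lets boxes straddle the boundary safely.

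Third, I would split the index set as $\mathcal{L}(j,i)\cup\mathcal{U}(j,i)$ according to whether the product of the other coordinates is supported below $(1-\varepsilon_0)(A_j-\varepsilon)$. This matches the domain split in $J$ and $K$, where $t_1+\dots+t_{k-1}\le\max\{A_m-\varepsilon,A_{m'}-\varepsilon\}$ versus $>$. In $\mathcal{J}_i$, the cross term counts $\mathcal{U}\times\mathcal{L}$ twice but drops $\mathcal{U}\times\mathcal{U}$. This corresponds to using the max condition in $J$ and putting the remainder in $K$, where only the squared contribution with the bound $-c_2$ is kept. The boxes must be fine enough that each box lies entirely on one side of the threshold up to a set of small measure. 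Then $\sum_i\mathcal{J}_i\to kJ$, $\sum_i\mathcal{K}_i\to kK$ (up to the symmetry factor) and $\mathcal{I}\to I$ as the mesh and $\varepsilon_0$ tend to $0$. The strict inequality then carries over.

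I expect the main obstacle to be this bookkeeping in the third step. One has to check that the $\mathcal{L}/\mathcal{U}$ classification, done box by box, reproduces exactly the max-domain in $J$ for pairs $(j,j')$ with different $A$-thresholds. The boundary layers, namely boxes straddling $\sum_{s\ne i}t_s=A_j-\varepsilon$ or the $\delta$-hyperplanes, must contribute $o(1)$, which needs Cauchy--Schwarz bounds on the marginals. I would handle this by noting that the total measure of straddling boxes tends to zero with the mesh.
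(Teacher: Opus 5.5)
Your proposal is correct in outline, but it orders the steps differently from the paper.

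\textbf{The paper's route.} It shifts and dilates each $F\cdot 1_{S_j}$, mollifies, and forms the $k$-fold antiderivative $f_{3,j}(t)=\int_{t_1}^\infty\cdots\int_{t_k}^\infty F_{3,j}$. It then rewrites $I,J,K$ through mixed partial derivatives of $f_{3,j}$. Only at the end does it apply Stone--Weierstrass, as in Polymath's Section 5.3, to approximate $f_{3,j}$ by sums of products with short supports.

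\textbf{Your route.} You approximate the density $F_j$ itself by sums of products $\prod_s g_{j,l,s}(t_s)$ on small boxes, and integrate each one-variable factor afterwards. This is simpler. Everything lives in $[0,1]^k$, so $(\int_0^1 G\,dt_i)^2\le\int_0^1 G^2\,dt_i$ makes $I,J,K$, including the $(j,j')$-dependent thresholds, continuous bilinear forms on $L^2$. Hence plain $L^2$ approximation suffices. No $C^k$-type control of $\partial^k f/\partial t_1\cdots\partial t_k$ is needed, and the $\varepsilon_2$-shift disappears.

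\textbf{Points to make explicit.}
\begin{itemize}
\item[(a)] $f_{j,l,s}(t)=\int_t^\infty g_{j,l,s}$ is supported on $[0,b_s]$, not on the box side. Property (1) therefore needs the top corner $(b_1,\dots,b_k)$ of each box to lie in $R_k^+(\delta,\underline{A},\underline{B},\varepsilon,j,\varepsilon_0)$, combined with downward closedness.
\item[(b)] Near $t_i=\delta$ it is the dilation, not downward closedness alone, that makes this work. The dilation moves the jump of the constraint to $t_i=(1-\varepsilon_0')\delta$. So for mesh $\ll\min\{\varepsilon_0'\delta,(\varepsilon_0'-\varepsilon_0)\delta/k\}$, a top corner with $c_i>\delta$ comes from a point $s$ with $s_i>\delta$, which already satisfied the stricter constraint.
\item[(c)] A generic partition of unity subordinate to boxes is not made of product functions. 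Use a tensor-product grid partition and set $G_j\approx\sum_a G_j(c_a)\prod_s\chi_{a_s}(t_s)$. This keeps coefficients and overlaps bounded, and that is exactly what turns ``the straddling slab has measure $O(\mathrm{mesh})$'' into an $O(\mathrm{mesh})$ bound on the discarded $\mathcal{U}\times\mathcal{U}$ contributions to $J$ and $K$. The paper asserts this from the measure bound alone, with Stone--Weierstrass coefficients that are not a priori bounded.
\end{itemize}
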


Our choice of sieve weights in the GPY method will then be as follows:
 $$\nu(n)=\left(\sum_{j=1}^n\sum_{l=1}^{L_j} c_{j,l} \lambda_{f_{j,l,1}}(n+h_1) \dots \lambda_{f_{j,l,k}}(n+h_k)  \right)^2$$    

\begin{proof}
Let $F:[0,\infty)^k\rightarrow \mathbb{R}$ be a symmetric, square-integrable function, supported on $T_k(\delta,\underline{A},\underline{B},\varepsilon)$, which satisfies the inequality
\begin{align} \label{inequ:intstep1}
\frac{k (1-c_1) J(F;\delta,\underline{A},\underline{B},\varepsilon) - kc_2 K(F;\delta,\underline{A},\underline{B},\varepsilon)}{I(F;\delta,\underline{A},\underline{B},\varepsilon)} > 1.
\end{align}
Let $S_j = \{(t_1, \dots, t_k) \in [0,1]^k:     \sum_{i=1}^k t_i \in \left[A_{j-1}+\varepsilon,  A_j+\varepsilon \right) \mbox{ and }  \sum_{i \in I} t_i \leq B_{j,|I|} \mbox{ for }  I=\{i: t_i > \delta\} \}$ and let $1_{S_j}(n)$ be the indicator function of set $S_j$. Since $\{S_1, \dots, S_n\}$ is a partition of $T_k(\delta,\underline{A},\underline{B},\varepsilon)$, we  then have  $F= \sum_{j=1}^n F \cdot 1_{S_j}$.  

\medskip
 
Next we choose   small $\varepsilon_1, \varepsilon_2>0$ and define $F_{2,j}:[0,\infty)^k\rightarrow \mathbb{R}$ as follows:
\begin{align*}
F_{2,j}(t_1,\dots,t_k)=\begin{cases} &F\left(\frac{t_1-\varepsilon_2}{1-\varepsilon_1},\dots,\frac{t_k-\varepsilon_2}{1-\varepsilon_1}\right) \cdot 1_{S_j}\!\left(\frac{t_1-\varepsilon_2}{1-\varepsilon_1},\dots,\frac{t_k-\varepsilon_2}{1-\varepsilon_1}\right) \quad  \mbox{ if } t_1, \dots, t_k \geq \varepsilon_2, \\
& 0 \qquad\qquad\qquad\qquad\qquad\qquad\qquad\qquad\qquad\qquad \mbox{ otherwise.} \end{cases}
\end{align*}

We also take $\varepsilon_0 = \frac{\varepsilon_1}{100}$ and define the integrals 
\begin{align*} 
&I(F,G)=\int_0^\infty \dots \int_0^\infty F(t_1, \dots, t_k)G(t_1, \dots, t_k) \,\, \mbox{d}t_1 \dots \mbox{d}t_k, \\
&J_i(F,G;B)=  \mathop{\int_0^\infty \dots \int_0^\infty}_{\substack{\sum_{s \neq i } t_s < B   }}  F(t_1,\dots, t_{i-1}, t_i, t_{i+1}, \dots, t_k)  G(t_1, \dots, t_{i-1}, t'_i, t_{i+1}, \dots, t_k) \,  \mbox{d}t'_i\mbox{d}t_1 \dots  \mbox{d}t_k, \\
&K_i(F,G;B)=  \mathop{\int_0^\infty \dots \int_0^\infty}_{\substack{\sum_{s \neq i } t_s > B   }}  F(t_1,\dots, t_k)  G(t_1, \dots, t_k) \, \mbox{d}t_1 \dots  \mbox{d}t_k, 
\end{align*} 
 Since  $F$ is square-integrable and symmetric, and satisfies inequality  (\ref{inequ:intstep1}), we can choose a very small  $\varepsilon_1>0$   and  an even smaller $\varepsilon_2>0$ so that our shifted and scaled functions $F_{2,j}$ still satisfy 
\begin{align}\label{inequ:proofstep2}
&\frac{\sum_{i=1}^k\sum_{j=1}^n \sum_{j'=1}^n ( (1-c_1) J_i(F_{2,j},F_{2,j'};B^*(j,j')) - c_2 K_i(F_{2,j},F_{2,j'};B^*(j,j'))) }{\sum_{j=1}^n \sum_{j'=1}^n I(F_{2,j},F_{2,j'})} > 1 \\
&\mbox{where } B^*(j,j')=(1-\varepsilon_0)\max\{A_j-\varepsilon,A_{j'}-\varepsilon\}. \nonumber
\end{align}
Furthermore, recall  the assumption  $B_{j,m} \leq B_{j,m+1} \leq B_{j,m}+\delta$. Hence $\sum_{i \in I} t_i \leq B_{j,|I|} \mbox{ for }  I=\{i: t_i > \delta\}$   implies $\sum_{i \in I'} t_i \leq B_{j,|I'|}$ for any $I' \subseteq I$. In particular, if $(t_1, \dots, t_k)$ is in the support of $F_{2,j}$, and $L=\{i:t_i>\delta\}$, then for any $i \in L$,  also $\frac{t_i-\varepsilon_2}{1-\varepsilon_1} > \delta$ and so $\sum_{i \in L} (\frac{t_i-\varepsilon_2}{1-\varepsilon_1}) \leq B_{j,|L|}$. Hence, for $\varepsilon_2$ much smaller than $\varepsilon_1$, we see that the support of  $F_{2,j}$ is   contained inside $R_k^+(\delta,\underline{A},\underline{B},\varepsilon,j,2\varepsilon_0)$, and it actually  stays away from all boundary faces of $R_k^+(\delta,\underline{A},\underline{B},\varepsilon,j,2\varepsilon_0)$.   Thus we can convolve each $F_{2,j}$ with a smooth approximation to the identity, supported close to the origin, to obtain   functions $F_{3,j}:[0,\infty)^k \rightarrow \mathbb{R}$ which are smooth, supported on  $R_k^+(\delta,\underline{A},\underline{B},\varepsilon,j,2\varepsilon_0)$ and  satisfy the inequality
\begin{align}\label{inequ:proofstep3}
&\frac{\sum_{i=1}^k\sum_{j=1}^n \sum_{j'=1}^n ( (1-c_1) J_i(F_{3,j},F_{3,j'};B^*(j,j')) - c_2 K_i(F_{3,j},F_{3,j'};B^*(j,j'))) }{\sum_{j=1}^n \sum_{j'=1}^n I(F_{3,j},F_{3,j'})} > 1.
\end{align}
Following the arguments of Polymath~\cite{Polymath:2014:VSS}, we next define $f_{3,j}:[0,\infty)^k \rightarrow \mathbb{R}$ by setting 
\begin{align*}
f_{3,j}(t_1, \dots, t_k) = \int_{t_1}^\infty \dots \int_{t_k}^\infty F_{3,j}(s_1, \dots, s_k) \,\mbox{d}s_1 \dots \mbox{d}s_k.
\end{align*}
Yet again, the condition  $B_{j,m} \leq B_{j,m+1} \leq B_{j,m}+\delta$ ensures that the support of $f_{3,j}$ stays contained in $R_k^+(\delta,\underline{A},\underline{B},\varepsilon,j,2\varepsilon_0)$. Furthermore, 
\begin{align*} 
&I(F_{3,j},F_{3,j'})=\int_0^\infty \dots \int_0^\infty  \frac{\partial^k}{\partial t_1 \dots \partial t_k} f_{3,j}(t_1, \dots, t_k)\frac{\partial^k}{\partial t_1 \dots \partial t_k} f_{3,j'}(t_1, \dots, t_k) \,\, \mbox{d}t_1 \dots \mbox{d}t_k, \\
&J_i(F_{3,j},F_{3,j'};B)=  \mathop{\int_0^\infty  \!\!\dots \int_0^\infty}_{\substack{\sum_{s \neq i } t_s < B   }} \frac{\partial^{k-1}}{\prod_{s \neq i} \partial t_s}   f_{3,j}(t_1, \dots, t_{i-1}, 0, t_{i+1}, \dots, t_k) \frac{\partial^{k-1}}{\prod_{s \neq i} \partial t_s}   f_{3,j'}(t_1, \dots, t_{i-1}, 0, t_{i+1}, \dots, t_k) \,   \prod_{s \neq i} \mbox{d}t_s, \\
&K_i(F_{3,j},F_{3,j'};B)=  \mathop{\int_0^\infty  \!\!\dots \int_0^\infty}_{\substack{\sum_{s \neq i } t_s > B   }} \frac{\partial^k}{\partial t_1 \dots \partial t_k} f_{3,j}(t_1, \dots, t_k)\frac{\partial^k}{\partial t_1 \dots \partial t_k} f_{3,j'}(t_1, \dots, t_k) \,\, \mbox{d}t_1 \dots \mbox{d}t_k.
\end{align*} 
We now refer to $I(F_{3,j},F_{3,j'})$ by $I^*(f_{3,j},f_{3,j'})$, we refer to $J_i(F_{3,j},F_{3,j'};B)$ by $J_i^*(f_{3,j},f_{3,j'};B)$ and we refer to $K_i(F_{3,j},F_{3,j'};B)$ by $K_i^*(f_{3,j},f_{3,j'};B)$. Hence  
\begin{align}\label{inequ:proofstep4}
&\frac{\sum_{i=1}^k\sum_{j=1}^n \sum_{j'=1}^n ( (1-c_1) J^*_i(f_{3,j},f_{3,j'};B^*(j,j')) - c_2 K^*_i(f_{3,j},f_{3,j'};B^*(j,j'))) }{\sum_{j=1}^n \sum_{j'=1}^n I^*(f_{3,j},f_{3,j'})} > 1 \\
&\mbox{where } B^*(j,j')=(1-\varepsilon_0)\max\{A_j-\varepsilon,A_{j'}-\varepsilon\}. \nonumber
\end{align}

\medskip
Next we choose another very small constant $\varepsilon_3>0$ and (like in Section~5.3 of~\cite{Polymath:2014:VSS})  apply  the Stone-Weierstrass theorem to each function $f_{3,j}$, expressing it as the smooth limit of linear combinations of the form $\sum c_l f_{4,j,l,1}(t_1) \dots  f_{4,j,l,k}(t_k)$, where each $ f_{4,j,l,i}(t_i)$ is a   smooth function    supported on an  interval of length at most $\varepsilon_3$.   Overall, we can find
 integers $L_1, \dots, L_n$, constants $c_{j,l}$ and   and smooth   functions $f_{j,l,i}: [0,\infty) \rightarrow \mathbb{R}$   with the following properties:
\begin{enumerate}[{\rm (i)}]
\item Each $f_{j,l,i}(t)$   is smooth and supported on an interval of length at most $\varepsilon_3$.
\item For every  $j \in \{1, \dots, n\}$ and $l \in \{1, \dots, L_j\}$, the support of  product $f_{j,l,1}(t_1) \dots f_{j,l,k}(t_k)$ is contained in $  R_k^+(\delta,\underline{A},\underline{B},\varepsilon,j,\varepsilon_0)$.
\item For each $j \in \{1, \dots, n\}$,  sum $$h_j(t_1, \dots, t_k)=\sum_{l=1}^{L_j} c_{j,l} f_{j,l,1}(t_1) \dots  f_{j,l,k}(t_k) $$   approximates   function $f_{3,j}(t_1, \dots, t_k)$ so well that the following is true:
\begin{align}\label{inequ:proofstep5}
&\frac{\sum_{i=1}^k\sum_{j=1}^n \sum_{j'=1}^n ( (1-c_1) J^*_i(h_{j},h_{j'};B^*(j,j')) - c_2 K^*_i(h_{j},h_{j'};B^*(j,j'))) }{\sum_{j=1}^n \sum_{j'=1}^n I^*(h_{j},h_{j'})} > 1.
\end{align}
\end{enumerate}
Now we take a closer look at $I^*$,  $J^*_i$ and $K^*_i$. The expression $\sum_{j=1}^n \sum_{j'=1}^n I^*(h_{j},h_{j'})$   rearranges to 
\begin{align*}
 \sum_{j=1}^n\sum_{l=1}^{L_j} \sum_{j'=1}^n \sum_{l'=1}^{L_{j'}} c_{j,l} c_{j',l'} 
\int_0^\infty \dots \int_0^\infty   f'_{j,l,1}(t_1) \dots  f'_{j,l,k}(t_k)  f'_{j',l',1}(t_1) \dots  f'_{j',l',k}(t_k)  \,\, \mbox{d}t_1 \dots \mbox{d}t_k.
\end{align*}
But this is simply $\mathcal{I}$. On the other hand, consider $\sum_{j=1}^n \sum_{j'=1}^n J^*_i(h_{j},h_{j'};(1-\varepsilon_0)\max\{A_j-\varepsilon,A_{j'}-\varepsilon\})$ for some $i \in \{1, \dots, k\}$. It equals the following expression: 
\begin{align*}
\mathcal{J}^*_i&=\sum_{j=1}^n \sum_{l=1}^{L_j}  \sum_{j'=1}^n  \sum_{l'=1}^{L_{j'}} c_{j,l}c_{j',l'} f_{j,l,i}(0)  f_{j',l',i}(0)   \!\!\!\!\!\!\!\!\!\!\!\!\!\!\!\!\!\!\!\! \mathop{\int_0^\infty  \!\!\dots \int_0^\infty}_{\substack{\sum_{s \neq i } t_s < (1-\varepsilon_0)\max\{A_j-\varepsilon,A_{j'}-\varepsilon\}  }} \prod_{s\neq i} f'_{j,l,s}(t_s)    \prod_{s\neq i} f'_{j',l',s}(t_s)    \prod_{s \neq i} \mbox{d}t_s. 
\end{align*}

We now partition each $\{1, \dots, L_j\}$ as follows: $\mathcal{L}(j,i)$ is the set of $ l \in  \{1, \dots, L_j\}$ for which the  support of  product $\prod_{s\neq i} f_{j,l,s}(t_s) $ is contained in $\{(t_1, \dots, t_{i-1},t_{i+1}, \dots, t_k): \sum_{s \neq i} t_i < (1-\varepsilon_0)(A_j-\varepsilon)\}$. The set $\mathcal{U}(j,i)$ is the complement $\{1, \dots, L_j\} \setminus \mathcal{L}(j,i)$. 

\medskip  The union of the supports of functions  $\prod_{s\neq i} f_{j,l,s}(t_s)\prod_{s\neq i} f_{j',l',s}(t_s) $ with $l \in \mathcal{U}(j,i)$ and $l' \in \mathcal{U}(j',i)$  only overlaps with  
$\{(t_1, \dots, t_{i-1},t_{i+1}, \dots, t_k): \sum_{s \neq i} t_i < (1-\varepsilon_0)\max\{A_j-\varepsilon,A_{j'}-\varepsilon\}\}$ on a set of measure $O(\varepsilon_3)$.  Hence for any given $\varepsilon_4>0$, we can choose the value of $\varepsilon_3$ sufficiently small that the contribution of pairs $(l,l') \in \mathcal{U}(j,i) \times \mathcal{U}(j',i)$  to $\mathcal{J}^*_i$ is bounded by $\varepsilon_4$ and 
\begin{align*}
\mathcal{J}_i&=\sum_{j=1}^n \sum_{l \in  \mathcal{L}(j,i)}  \sum_{j'=1}^n  \sum_{l' \in  \mathcal{L}(j',i)}  c_{j,l}c_{j',l'} f_{j,l,i}(0)  f_{j',l',i}(0)   \mathop{\int_0^\infty  \!\!\dots \int_0^\infty}\prod_{s\neq i} f'_{j,l,s}(t_s)    \prod_{s\neq i} f'_{j',l',s}(t_s)    \prod_{s \neq i} \mbox{d}t_s \\
&+2\sum_{j=1}^n \sum_{l \in  \mathcal{U}(j,i)}  \sum_{j'=1}^n  \sum_{l' \in  \mathcal{L}(j',i)}  c_{j,l}c_{j',l'} f_{j,l,i}(0)  f_{j',l',i}(0)   \mathop{\int_0^\infty  \!\!\dots \int_0^\infty}\prod_{s\neq i} f'_{j,l,s}(t_s)    \prod_{s\neq i} f'_{j',l',s}(t_s)    \prod_{s \neq i} \mbox{d}t_s  \\
&> \sum_{j=1}^n \sum_{j'=1}^n J^*_i(h_{j},h_{j'};(1-\varepsilon_0)\max\{A_j-\varepsilon,A_{j'}-\varepsilon\}) -\varepsilon_4.
\end{align*}

Finally, pairs  $(l,l') $ with $l \in \mathcal{L}(j,i)$ or $l' \in \mathcal{L}(j',i)$  do not contribute at all to    $\sum_{j=1}^n \sum_{j'=1}^n K^*_i(h_{j},h_{j'};B^*(j,j'))$, as their support does not overlap with the integration region. Thus the sum of $K^*_i$ equals 
\begin{align*}
\mathcal{K}^*_i&=\sum_{j=1}^n \sum_{l \in \mathcal{U}(j,i)} \sum_{j'=1}^n  \sum_{l' \in \mathcal{U}(j',i)}  c_{j,l}c_{j',l'}   \!\!\!\!\!\!\!\!\!\!\!\!\!\!\!\!\!\!\!\! \mathop{\int_0^\infty  \!\!\dots \int_0^\infty}_{\substack{\sum_{s \neq i } t_s > (1-\varepsilon_0)\max\{A_j-\varepsilon,A_{j'}-\varepsilon\}  }} \prod_{s=1}^k f'_{j,l,s}(t_s)    \prod_{s=1}^k f'_{j',l',s}(t_s)  \,  \mbox{d}t_1 \dots  \mbox{d}t_k. 
\end{align*}
Since the support of functions  $\prod_{s\neq i} f_{j,l,i}(t_i)\prod_{s\neq i} f_{j',l',i}(t_i) $ with $l \in \mathcal{U}(j,i)$ and $l' \in \mathcal{U}(j',i)$  only overlaps with  
$\{(t_1, \dots, t_{i-1},t_{i+1}, \dots, t_k): \sum_{s \neq i} t_i < (1-\varepsilon_0)\max\{A_j-\varepsilon,A_{j'}-\varepsilon\}\}$ on a set of measure $O(\varepsilon_3)$, we can drop the condition $\sum_{s \neq i } t_s > (1-\varepsilon_0)\max\{A_j-\varepsilon,A_{j'}-\varepsilon\}$ with a loss of at most $\varepsilon_4$, provided that $\varepsilon_3$ was chosen small enough. Overall we then have 
\begin{align*}
\mathcal{K}_i&=\sum_{j=1}^n \sum_{l \in \mathcal{U}(j,i)} \sum_{j'=1}^n  \sum_{l' \in \mathcal{U}(j',i)}  c_{j,l}c_{j',l'}     \mathop{\int_0^\infty  \!\!\dots \int_0^\infty}  \prod_{s=1}^k f'_{j,l,s}(t_s)    \prod_{s=1}^k f'_{j',l',s}(t_s)  \,  \mbox{d}t_1 \dots  \mbox{d}t_k \\
&< \sum_{j=1}^n \sum_{j'=1}^n K^*_i(h_{j},h_{j'};B^*(j,j')) + \varepsilon_4.
\end{align*}
Looking back at inequality (\ref{inequ:proofstep5}) we therefore have 
\begin{align*}
\frac{(1-c_1) \sum_{i=1}^k  \mathcal{J}_i - c_2 \sum_{i=1}^k  \mathcal{K}_i + ((1-c_1)k + c_2 k) \varepsilon_4}{ \mathcal{I}} > 1.
\end{align*}
Hence, taking $\varepsilon_4$ sufficiently small (and the corresponding $\varepsilon_3$ even smaller), we arrive at the desired inequality $\frac{(1-c_1) \sum_{i=1}^k  \mathcal{J}_i - c_2 \sum_{i=1}^k  \mathcal{K}_i}{ \mathcal{I}} > 1$.
 \end{proof}

 \subsection{Asymptotics} 
 Having chosen sieve weights $\nu(n)$, we now  need to prove asymptotics for sums $\sum_{x \leq n \leq 2x} \nu(n) \sum_{i=1}^k \rho(n+h_i; x)$.  We first look at $\sum_{x \leq n \leq 2x}  \rho(n+h_i; x) \prod_{s =1}^k \lambda_{F_s}(n+h_s) \lambda_{G_s}(n+h_s) $.  Here we can follow the arguments of Section~4.2 of Polymath~\cite{Polymath:2014:VSS}, with  small amendments due to use of Harman's sieve (like in Baker and Irving's work~\cite{Baker:2017:BIP}) and due to the new support. 
 
 \begin{lemma}\label{lem:asymptotics}  Suppose that function $\rho: \mathbb{N} \times (1,\infty) \rightarrow \mathbb{R}$ has the following properties:
\begin{enumerate}[{\rm(1)}]
\item Prime minorant: $-c_2 \leq \rho(n;x) \leq 1_{\mathbb{P}}(n)$ for all large $x$ and $n \in [x,2x]$.  
\item  $\rho$ has  equidistribution properties for  moduli corresponding to $T_k(\delta,\underline{A},\underline{B},\varepsilon)$. $($See Definition~\ref{def:equidist}.$)$
\item If $\rho(n;x) \neq 0$, then all prime factors of $n$ exceed $x^\beta$, for some $\beta > \max\{ B_{j,1}: 1\leq j \leq n \}$.  
\item The sum of $\rho(n;x)$ over $[x,2x]$ satisfies
$\sum_{n \in [x,2x]} \rho(n;x) = (1-c_1 +o(1)) \frac{x}{\log(x)}$.
\end{enumerate}
 Let $i_0 \in \{1, \dots, k\}$.  Consider   smooth compactly supported functions $F_{i}: [0,\infty) \rightarrow \mathbb{R}$ and $G_{i}: [0,\infty) \rightarrow \mathbb{R}$. 

Let $R_k^+(\delta,\underline{A},\underline{B},\varepsilon,j,\varepsilon_0)$ be as defined in Lemma~\ref{lem:sieveweights}. Suppose that   the support of  product $F_{1}(t_1) \dots F_{k}(t_k)$  is contained in the set $  R_k^+(\delta,\underline{A},\underline{B},\varepsilon,j,\varepsilon_0)$ and the support of   product $G_{1}(t_1) \dots G_{k}(t_k)$ is  contained in   $  R_k^+(\delta,\underline{A},\underline{B},\varepsilon,j',\varepsilon_0)$. 
Finally, suppose that the   support of  product $\prod_{i\neq i_0} F_{i}(t_i) $ is contained in the set $\{(t_1, \dots, t_{i_0-1},t_{i_0+1}, \dots, t_k): \sum_{i \neq i_0} t_i < (1-\varepsilon_0)(A_j-\varepsilon)\}$.  

\smallskip

Let $W= \prod_{p \leq \log\log\log x} p$ and   $b \in \mathbb{N}$ with  $(b+h_i,W)=1$ for all $i \in \{1,\dots,k\}$.   Then
\begin{align*}
&\sum_{\substack{x \leq n \leq 2x \\ n \equiv b (W)}}  \rho(n+h_{i_0}; x) \prod_{i =1}^k \lambda_{F_i}(n+h_i) \lambda_{G_i}(n+h_i) \\ &= \Bigg((1-c_1)  F_{k}(0) G_{k}(0) \prod_{i \neq i_0} \int_0^\infty F'_i(t_i)G'_i(t_i) \mbox{d}t_i+o(1)\Bigg)  \left( \frac{ xW^{k-1}}{\phi(W)^{k} \log(x)^{k}}\right).
\end{align*} \end{lemma}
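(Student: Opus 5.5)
We follow the argument of Section~4.2 of Polymath~\cite{Polymath:2014:VSS}, adapted as in Baker and Irving~\cite{Baker:2017:BIP} for a minorant $\rho$. We expand each $\lambda_{F_i}(n+h_i)\lambda_{G_i}(n+h_i)$ as $\sum_{d_i,e_i \mid n+h_i}\mu(d_i)\mu(e_i)F_i(\log_x d_i)G_i(\log_x e_i)$ and interchange summations. Because the $h_i$ are distinct and $(b+h_i,W)=1$, only tuples with $[d_i,e_i]$ pairwise coprime and coprime to $W$ contribute; the non-coprime tuples (sharing a prime $p>\log\log\log x$) are handled by the standard Rankin-type bound and contribute $o(1)$ of the main term.

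For the index $i_0$, hypothesis (3) is the key input. Since $\rho(n+h_{i_0};x)\neq 0$ forces every prime factor of $n+h_{i_0}$ to exceed $x^\beta$, any $d_{i_0}>1$ in the support has $\log_x d_{i_0}\geq \beta>B_{j,1}$, hence $t_{i_0}>\delta$. By the definition of $R_k^+$, a single big coordinate must satisfy $t_{i_0}<(1-\varepsilon_0)B_{j,1}<\beta$. So only $d_{i_0}=e_{i_0}=1$ survives, and this produces the factor $F_{i_0}(0)G_{i_0}(0)$. (The statement writes $F_k(0)G_k(0)$; by symmetry this is the $i_0$ factor.) Writing $q=W\prod_{i\neq i_0}[d_i,e_i]$, the sum becomes
\[\sum_{d,e}\Big(\prod_{i\ne i_0}\mu(d_i)\mu(e_i)F_i(\log_x d_i)G_i(\log_x e_i)\Big)\sum_{\substack{x\le n\le 2x\\ n\equiv a\,(q)}}\rho(n+h_{i_0};x),\]
with $a$ determined by CRT. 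We replace the inner sum by $\frac{1}{\phi(q)}\sum_{(n,q)=1}\rho$. By (3), $(n+h_{i_0},q)=1$ holds automatically except for primes dividing $W$, so hypothesis (4) evaluates the main term as $(1-c_1+o(1))\frac{x}{\phi(q)\log x}$, with the $W$-factor giving $\frac{1}{\phi(W)}$ after the shift $n\equiv b \pmod W$.

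The main obstacle is the error term. We must show that the sum over $d,e$ of the discrepancies is $\ll x/\log^A x$. Since the coefficients are bounded by $\tau$-type functions, Cauchy--Schwarz together with the trivial bound reduces this to verifying that every modulus $q$ that occurs lies in $Q^*(x;\delta,\underline{A},\underline{B},\varepsilon,\varepsilon_0)$ of Definition~\ref{def:neededmoduli}, so that hypothesis (2) applies. We factor $\prod_{i\ne i_0}d_i=e\prod f$, where the $f$'s are the $d_i$ with $\log_x d_i\ge\delta$ and $e$ is the $x^\delta$-smooth part, and factor the $e_i$ the same way. The support condition on $\prod_{i\ne i_0}F_i$ gives $\log_x(\prod f\cdot e)<(1-\varepsilon_0)(A_j-\varepsilon)$. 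Membership of the $G$-product in $R_k^+(\ldots,j',\varepsilon_0)$ gives $\log_x(\prod f' e')<(1-\varepsilon_0)(A_{j'}+\varepsilon)$. The big-part bounds $\le (1-\varepsilon_0)B_{j,m}$ follow from the $B$-constraint, using the monotonicity $B_{j,m}\le B_{j,m+1}\le B_{j,m}+\delta$ to pass to subsets $I'\subseteq I$. Two points need care: the lcm structure and overlaps between $d_i$ and $e_i$, which only shrink the modulus, and the fact that a $d_i$ below $x^\delta$ need not be smooth. For the latter, I would split each $d_i$ into its $x^\delta$-smooth and rough parts rather than classifying whole $d_i$.

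Finally, the main term sum $\sum \prod\mu(d_i)\mu(e_i)F_i G_i/\phi([d_i,e_i])$ over $W$-coprime tuples is evaluated by the standard Fourier/contour computation of Polymath (Lemma~4.1 there). This yields $\prod_{i\ne i_0}\int_0^\infty F_i'G_i'\,dt\cdot\left(\frac{W}{\phi(W)}\right)^{k-1}(\log x)^{-(k-1)}$. Combining it with the factor $\frac{x}{\phi(W)\log x}$ gives the stated asymptotic.
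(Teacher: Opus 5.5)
Your outline follows the paper's proof: hypothesis (3) forces $d_{i_0}=e_{i_0}=1$, the $W$-trick reduces everything to one congruence modulo $q=W\prod_{i\neq i_0}[d_i,e_i]$, Polymath's Lemma~4.1 gives the main term, and the error term is reduced to showing that every contributing $q$ lies in $Q^*$. However, the error-term step does not go through as you state it.

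Write $\Delta(\rho;a\,(q))$ for the discrepancy. Hypothesis (2), via Definition~\ref{def:equidist}, bounds $\sum_{q\in Q^*}\Delta(\rho;a\,(q))$ only for one \emph{fixed} integer $a$ coprime to every prime $p\le x$, uniformly in that $a$. In your error term the class $a_q$ produced by the Chinese remainder theorem depends on the tuple. Modulo a prime $p\mid[d_i,e_i]$ it equals $h_{i_0}-h_i$. The same $p$ can divide $[d_i,e_i]$ for different indices $i$ in different tuples, so no single $a$ satisfies $a\equiv a_q \pmod q$ for all contributing $q$. Knowing $q\in Q^*$ is therefore not enough to invoke (2).

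The missing idea is the averaging device the paper adapts from p.~22 of~\cite{Polymath:2014:VSS}. Let $\mathcal{A}$ be the set of $a \bmod \prod_{p\le x}p$ with $a\equiv b+h_{i_0}\pmod W$ and, for every $p\in(\log\log\log x,x]$, $a\equiv h_{i_0}-h_i\pmod p$ for some $i\neq i_0$.
\begin{itemize}
\item Each element of $\mathcal{A}$ is coprime to all $p\le x$, using $(b+h_{i_0},W)=1$ and $p\nmid h_{i_0}-h_i$.
\item By the Chinese remainder theorem, a proportion $(k-1)^{-\omega(q/W)}$ of $\mathcal{A}$ reduces to $a_q$ modulo $q$. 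Hence $\Delta(\rho;a_q\,(q))\le \frac{\tau(q)^{O(1)}}{|\mathcal{A}|}\sum_{\tilde a\in\mathcal{A}}\Delta(\rho;\tilde a\,(q))$.
\item After interchanging the sums over $q$ and $\tilde a$, your Cauchy--Schwarz step with the trivial bound absorbs this weight, together with the $\tau(q)^{O(1)}$ multiplicity of representations $q=W\prod[d_i,e_i]$. Hypothesis (2) then applies to each fixed $\tilde a$.
\end{itemize}

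Two smaller remarks. First, tuples in which two of the $[d_i,e_i]$ share a prime, or one of them shares a prime with $W$, contribute exactly zero because the congruences are inconsistent. So no Rankin-type bound is needed there. Second, any $d_i<x^{\delta}$ is automatically $x^{\delta}$-smooth. You can therefore put each whole small $d_i$, together with $W$, into the smooth part as the paper does, absorbing $W=x^{o(1)}$ by working with $Q^*$ at $\varepsilon_0/2$. There is no need to split each $d_i$ into smooth and rough parts.
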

 
 \begin{proof}
 Without loss of generality we assume that $i_0 =k$.  
 Recall that $\lambda_F(n)= \sum_{d \mid n} \mu(d) F(\log_x(d))$. We call the sum we are interested in $S$. Expanding, we have
 \begin{align*}
S&:=\sum_{\substack{x \leq n \leq 2x \\ n \equiv b (W)}}  \rho(n+h_{k}; x) \prod_{i =1}^k \lambda_{F_i}(n+h_i) \lambda_{G_i}(n+h_i) \\ &= \sum_{d_1, \dots, d_k, d'_1, \dots, d'_k }  \prod_{i=1}^k \mu(d_i) \mu(d'_i) F_i(\log_x(d_i))  G_i(\log_x(d'_i))\sum_{\substack{x \leq n \leq 2x \\ n \equiv b (W) \\ n \equiv -h_i [d_i, d'_i] \, \forall i}}  \rho(n+h_{k}; x).
\end{align*} 
 
 If $\rho(n+h_{k};x) \neq 0$, then any $d \mid n+h_{k}$ with $d \neq 1$ satisfies $\log_x(d) >   B_{j,1}$. But  by construction we have $B_{j,m} \leq B_{j,m+1} \leq B_{j,m} + \delta$, and so $\log_x(d) >   B_{j,1}$ implies $\log_x(d)+(m-1) \delta > B_{j,m}$ for all $m$. Thus any tuple $(\log_x(d_1), \dots, \log_x(d_k))$ with ($d_{k} \mid n+h_{k}$ and $d_{k} \neq 1$) cannot be in the support of product $F_1(t_1) \dots F_k(t_k)$. Only $d_{k} = 1$ contributes to $S$. Similarly, only $d'_{k}=1$ contributes to $S$ and  so we have  
  \begin{align*}
S&=  F_{k}(0) G_{k}(0) \sum_{d_1, \dots, d_k, d'_1, \dots, d'_k } \prod_{i=1}^{k-1}  \mu(d_i) \mu(d'_i) F_i(\log_x(d_i))  G_i(\log_x(d'_i))\sum_{\substack{x \leq n \leq 2x \\ n \equiv b (W) \\ n \equiv -h_i [d_i, d'_i] \, \forall i \neq k}}  \rho(n+h_{k}; x).
\end{align*} 
Now we use the $W$-trick as in Polymath~\cite{Polymath:2014:VSS}:   $b$ was chosen to satisfy $(b+h_i,W)=1$ for all $i$. Assuming   that $x$ is sufficiently large to ensure $|h_i - h_{i'}| < \log\log\log x$ for $i \neq i'$, the system of congruences given above thus can only have solutions if the lcms $[d_i,d'_i]$ are coprime to each other and to $W = \prod_{p \leq \log\log \log x} p$.  

  The conditions ($n \equiv b (W)$ and $n \equiv -h_i [d_i, d'_i] $ for $i \neq k$) can thus be replaced by a single congruence $n + h_k \equiv a_{W,d_1, \dots, d'_{k-1}} \mbox{ mod } q_{W,d_1, \dots, d'_{k-1}} $.  Here $a_{W,d_1, \dots, d'_{k-1}}$ can be chosen to be coprime to all primes $p \leq x$, and $q_{W,d_1, \dots, d'_{k-1}}$ is defined to equal
  $$q_{W,d_1, \dots, d'_{k-1}}  = W [d_1,d'_1] \dots [d_{k-1},d'_{k-1}].$$
  Hence we split $S$ into a main term $S_M$ and an error term $S_E$  as follows:
  \begin{align*}
  &S_{M} =  F_{k}(0) G_{k}(0) \sum_{d_1, \dots, d_k, d'_1, \dots, d'_k }  \frac{\prod_{i=1}^{k-1}  \mu(d_i) \mu(d'_i) F_i(\log_x(d_i))  G_i(\log_x(d'_i))}{ \phi( W[d_1,d'_1] \dots [d_{k-1},d'_{k-1}])} \sum_{\substack{x+h_k \leq m \leq 2x+h_k  }}  \rho(m; x), \\
  &S_{E} = | F_{k}(0) G_{k}(0)| \sum_{\substack{d_1, \dots, d_k, d'_1, \dots, d'_k \\ q_{W,d_1, \dots, d'_{k-1}} \mbox{\begin{scriptsize}
   squarefree
  \end{scriptsize} } }} \prod_{i=1}^{k-1}    |F_i(\log_x(d_i))  G_i(\log_x(d'_i))| \Delta(\rho;  a_{W,d_1, \dots, d'_{k-1}} (q_{W,d_1, \dots, d'_{k-1}})), \\
  &\mbox{where } \Delta(\rho;  a (q)) = \Bigg|  \sum_{\substack{x+h_k \leq m \leq 2x+h_k \\ m \equiv a (q)  }}  \rho(m; x) -  \frac{1}{\phi(q)} \sum_{\substack{x+h_k \leq m \leq 2x+h_k  \\ (m,q)=1   }}  \rho(m; x) \Bigg|.
  \end{align*}
  To deal with the main term $S_M$, we recall assumption $\sum_{n \in [x,2x]} \rho(n;x) = (1-c_1 +o(1)) \frac{x}{\log(x)}$, and we apply Lemma~4.1 of Polymath~\cite{Polymath:2014:VSS} with $[d_i,d'_i]$ replaced by $\phi([d_i,d'_i])$ (see also the discussion on page 19 of~\cite{Polymath:2014:VSS}). Overall this gives the asymptotic
  \begin{align*} 
  S_{M} =  \Bigg(  F_{k}(0) G_{k}(0) \Bigg( \prod_{i=1}^{k-1} \int_0^\infty F'_i(t_i) G'_i(t_i) \mbox{d}t_i \Bigg) (1-c_1)  +o(1) \Bigg)\left( \frac{ xW^{k-1}}{\phi(W)^{k} \log(x)^{k}}\right) .
  \end{align*}
  
  To deal with the error term $S_E$, we first recall 
 that   the support of   product $G_{1}(t_1) \dots G_{k}(t_k)$ is  contained in   $  R_k^+(\delta,\underline{A},\underline{B},\varepsilon,j',\varepsilon_0)$ and that the   support of  product $\prod_{i\neq i_0} F_{i}(t_i) $ is contained in the set $\{(t_1, \dots, t_{i_0-1},t_{i_0+1}, \dots, t_k): \sum_{i \neq i_0} t_i < (1-\varepsilon_0)(A_j-\varepsilon)\}$.  Consider  modulus $q = W [d_1,d'_1] \dots [d_{k-1},d'_{k-1}]$, and assume the modulus contributes to $S_E$. Relabel those $d_i$ which satisfy $d_i \geq x^\delta$ as $f_i$ and combine the remaining $d_i$ and $W$ to a product called $e$. Relabel $d'_i/(d_i,d'_i)$ which satisfy $d'_i/(d_i,d'_i) \geq x^\delta$ as $f'_i$ and combine the remaining $d'_i/(d_i,d'_i) $ to a product called $e'$. Then we have $ q=e e' \prod_{i=1}^m f_i \prod_{i=1}^{m'} f'_i$,    where $e$ and $e'$ are $x^\delta$-smooth and $ \log_x(f_1 \dots f_m e) \leq (1-\frac{\varepsilon_0}{2})(A_j-\varepsilon)$ and $\log_x(f'_1 \dots f'_{m'} e') \leq (1-\frac{\varepsilon_0}{2})(A_{j'}+\varepsilon)$. Using once more that $B_{j,m} \leq B_{j,m+1} \leq B_{j,m} + \delta$, we also have  $ \log_x(f_1 \dots f_m) \leq (1-\varepsilon_0)B_{j,m}$ and  $\log_x(f'_1 \dots f'_{m'}) \leq  (1-\varepsilon_0)B_{j',m'}$. In short, comparing with Definition~\ref{def:neededmoduli}, we see that any modulus  $q = W [d_1,d'_1] \dots [d_{k-1},d'_{k-1}]$ which contributes to $S_E$ is contained in the set $Q(x;\delta,\underline{A},\underline{B},\varepsilon,\frac{\varepsilon_0}{2})$.
 
\medskip We then recall our assumption (2) that $\rho$ has  equidistribution properties for  moduli corresponding to $T_k(\delta,\underline{A},\underline{B},\varepsilon)$: If $a \in \mathbb{Z}$ with $(a,p)=1$ for all $p \leq x$, and if $C>0$, we have 
\begin{align}\label{inequ:equidist}
\sum_{\substack{q \in Q^*(x;\delta,\underline{A},\underline{B},\varepsilon,\frac{\varepsilon_0}{2}) \\  q \mbox{ \scriptsize  is squarefree  }  }} \Bigg| \sum_{\substack{n \in [x,2x] \\ n \equiv a (q)}} \rho(n;x) - \dfrac{1}{\phi(q)} \sum_{\substack{n \in [x,2x] \\(n,q)=1 }}\rho(n;x) \Bigg| \ll_{C,\varepsilon_0} \dfrac{x}{\log(x)^C}.
\end{align}
However, in the statement above, the congruence class $a$ is fixed, whereas in $S_E$ we have that  $a_{W,d_1, \dots, d'_{k-1}}$ depends on $d_i$ and $d'_i$. To get around this issue, we use a trick similar to that on page~22 of~\cite{Polymath:2014:VSS}, and first define the following set: 
\begin{align*}
\mathcal{A} = \left\{ a \in \left\{1, \dots, \prod_{p \leq x} p  \right\}: a \equiv b +h_k \mbox{ mod } W,  \, \forall p \in \left(\log\log\log x, x \right] \exists i \in \{1, \dots, k-1\}:  a \equiv h_k -h_i \mbox{ mod } p\right\}.
\end{align*}
First we think about a fixed choice of $d_1, \dots, d'_{k-1}$, The set $\mathcal{A}$ is constructed so that for any $q=q_{W,d_1, \dots, d'_{k-1}}  = W [d_1,d'_1] \dots [d_{k-1},d'_{k-1}]$, the corresponding $a_{W,d_1, \dots, d'_{k-1}} (q)$ is contained in $\mathcal{A}$. More specifically, by applying the Chinese Remainder Theorem, we see that when $\widetilde{a}$ ranges over $\mathcal{A}$, then $\widetilde{a} \equiv a_{W,d_1, \dots, d'_{k-1}} (q)$ for $|\mathcal{A}|/(k-1)^{\omega(q/W)}$ choices of $\widetilde{a}$, where $\omega(q/W)$ is the number of distinct prime factors of $q/W$. But since $q/W$ is squarefree, we have $(k-1)^{\omega(q/W)} \ll \tau(q)^{O(1)}$. Hence if we sum over all $\widetilde{a} \in \mathcal{A}$ and multiply with $\frac{\tau(q)^{O(1)}}{|\mathcal{A}|}$, then $a_{W,d_1, \dots, d'_{k-1}} $ is counted at least once. Furthermore, any $q \in Q^*(x;\delta,\underline{A},\underline{B},\varepsilon,\frac{\varepsilon_0}{2})$ has $q=q_{W,d_1, \dots, d'_{k-1}} $ for at most $ \tau(q)^{O(1)} $ choices of $d_1, \dots, d'_{k-1}$. Therefore we obtain the following bound on $S_E$:
  \begin{align*}
S_{E} \ll \sum_{\substack{q \in Q^*(x;\delta,\underline{A},\underline{B},\varepsilon,\frac{\varepsilon_0}{2})  \\ q \mbox{\begin{scriptsize}
   squarefree
  \end{scriptsize} }   }}  \frac{\tau(q)^{O(1)}}{|\mathcal{A}|} \sum_{a \in \mathcal{A}} \Bigg| \sum_{\substack{n \in [x+h_k,2x+h_k] \\ n \equiv a (q)}} \rho(n;x) - \dfrac{1}{\phi(q)} \sum_{\substack{n \in [x+h_k,2x+h_k] \\(n,q)=1 }}\rho(n;x) \Bigg|.
  \end{align*}
  To finish the proof we simply apply the Cauchy-Schwarz inequality and equidistribution property (\ref{inequ:equidist}). Choosing constant $C$ sufficient large, we have:  
  \begin{align*}
S_{E} &\ll \frac{1}{|\mathcal{A|}} \sum_{a \in \mathcal{A}}  \Bigg(\sum_{\substack{q \in Q^*(x;\delta,\underline{A},\underline{B},\varepsilon,\frac{\varepsilon_0}{2})  \\ q \mbox{\begin{scriptsize}
   squarefree
  \end{scriptsize} }   }}   \tau(q)^{O(1)} \Delta(\rho,a(q)) \Bigg)^{1/2}\Bigg(\sum_{\substack{q \in Q^*(x;\delta,\underline{A},\underline{B},\varepsilon,\frac{\varepsilon_0}{2})  \\ q \mbox{\begin{scriptsize}
   squarefree
  \end{scriptsize} }   }}    \Delta(\rho,a(q)) \Bigg)^{1/2} \\
  &\ll_C \frac{1}{|\mathcal{A|}} \sum_{a \in \mathcal{A}}  (x \log(x)^{O(1)})^{1/2} (x \log(x)^{-C})^{1/2} \ll_C \frac{x}{\log(x)^{C/2}}.
  \end{align*}
  Since $W \ll (\log\log x)^{O(1)}$, summing up the asymptotic for $S_M$ and the bound on $S_E$ for a large $C$ completes the proof.
 \end{proof}

\subsection{Proof of Proposition~\ref{prop:GPYsieve}}

Proposition~\ref{prop:GPYsieve} can now be obtained as a   corollary of Lemma~\ref{lem:sieveweights}, Lemma~\ref{lem:asymptotics}, as well as Theorem~3.6 of~\cite{Polymath:2014:VSS}. We need to be slightly careful when mixing Harman's sieve ($\rho(n;x)$) and the epsilon enlargement trick.

\begin{proof}[Proof of Proposition~\ref{prop:GPYsieve}]
Let $\varepsilon_0>0$ be very small. By Lemma~\ref{lem:sieveweights}, we can then  find integers $L_1, \dots, L_n$, constants $c_{j,l}$    and   smooth compactly supported functions $f_{j,l,i}: [0,\infty) \rightarrow \mathbb{R}$, with  $f_{j,l,1}(t_1) \dots f_{j,l,k}(t_k)$ supported on $  R_k^+(\delta,\underline{A},\underline{B},\varepsilon,j,\varepsilon_0)$, so that the following is true: 
 Denoting by $\mathcal{L}(j,i)$  the set of $ l \in  \{1, \dots, L_j\}$ for which the  support of  product $\prod_{s\neq i} f_{j,l,s}(t_s) $ is contained in $\{(t_1, \dots, t_{i-1},t_{i+1}, \dots, t_k): \sum_{s \neq i} t_i < (1-\varepsilon_0)(A_j-\varepsilon)\}$, and denoting by    $\mathcal{U}(j,i)$  its   complement $\{1, \dots, L_j\} \setminus \mathcal{L}(j,i)$, 
the terms 
\begin{align*}
\mathcal{I}&=\sum_{j=1}^n\sum_{l=1}^{L_j} \sum_{j'=1}^n \sum_{l'=1}^{L_{j'}} c_{j,l} c_{j',l'}  \prod_{s=1}^k
\int_0^\infty    f'_{j,l,s}(t_s)f'_{j',l',s}(t_s) \, \mbox{d}t_s, \\
\mathcal{J}_i&=\sum_{j=1}^n \sum_{l \in  \mathcal{L}(j,i)}  \sum_{j'=1}^n  \sum_{l' \in  \mathcal{L}(j',i)}  c_{j,l}c_{j',l'} f_{j,l,i}(0)  f_{j',l',i}(0)  \prod_{s\neq i} \mathop{\int_0^\infty   }  f'_{j,l,s}(t_s)     f'_{j',l',s}(t_s)   \, \mbox{d}t_s \\
&+2\sum_{j=1}^n \sum_{l \in  \mathcal{U}(j,i)}  \sum_{j'=1}^n  \sum_{l' \in  \mathcal{L}(j',i)} c_{j,l}c_{j',l'} f_{j,l,i}(0)  f_{j',l',i}(0)  \prod_{s\neq i} \mathop{\int_0^\infty   }  f'_{j,l,s}(t_s)     f'_{j',l',s}(t_s)   \, \mbox{d}t_s\\
\mathcal{K}_i&=\sum_{j=1}^n \sum_{l \in  \mathcal{U}(j,i)}  \sum_{j'=1}^n  \sum_{l' \in  \mathcal{U}(j',i)}  c_{j,l}c_{j',l'}   \prod_{s=1}^k \mathop{\int_0^\infty} f'_{j,l,s}(t_s)     f'_{j',l',s}(t_s)    \, \mbox{d}t_s
\end{align*}
satisfy the inequality 
\begin{align*}
\frac{(1-c_1) \sum_{i=1}^k  \mathcal{J}_i - c_2 \sum_{i=1}^k  \mathcal{K}_i}{ \mathcal{I}} > 1.
\end{align*}
We now choose the following sieve weights:
 $$\nu(n)=\left(\sum_{j=1}^n\sum_{l=1}^{L_j} c_{j,l} \lambda_{f_{j,l,1}}(n+h_1) \dots \lambda_{f_{j,l,k}}(n+h_k)  \right)^2.$$  
Next we consider the ratio
\begin{align*}
&\widetilde{N}(x;\rho,\nu)=  \frac{\sum_{i=1}^k\sum_{\substack{x \leq n \leq 2x,  n \equiv b(W) }} \nu(n)   \rho(n+h_i;x) }{  \sum_{\substack{x \leq n \leq 2x,  n \equiv b(W) }} \nu(n)   }.
\end{align*} 
On the one hand, Theorem~3.6 of~\cite{Polymath:2014:VSS} tells us that
\begin{align*}
&\sum_{\substack{x \leq n \leq 2x \\ n \equiv b (W)}}  \prod_{s =1}^k \lambda_{f_{j,l,s}}(n+h_s) \lambda_{f_{j',l',s}}(n+h_s)  = \Bigg( \prod_{s =1}^k \int_0^\infty f'_{j,l,s}(t_s)f'_{j',l',s}(t_s) \mbox{d}t_s+o(1)\Bigg)  \left( \frac{ xW^{k-1}}{\phi(W)^{k} \log(x)^{k}}\right).
\end{align*}
Expanding the square in $\nu(n)$, this gives
\begin{align} \label{asym:prop11}
\sum_{\substack{x \leq n \leq 2x,  n \equiv b(W) }} \nu(n) = \left(\mathcal{I} + o(1) \right) \left( \frac{ xW^{k-1}}{\phi(W)^{k} \log(x)^{k}}\right).
\end{align}
On the other hand, a lower bound for the enumerator is obtained as follows: $\rho(n+h_i;x) + c_2 \geq 0$ and   
\begin{align*}
\nu(n) \rho(n+h_i;x) &\geq  \left(\sum_{j=1}^n\sum_{l \in \mathcal{L}(j,i)} c_{j,l} \prod_{s=1}^k \lambda_{f_{j,l,s}}(n+h_s) \right)^2 (\rho(n+h_i;x)+ c_2)   \\
&+2 \left(\sum_{j=1}^n\sum_{l \in \mathcal{L}(j,i)} c_{j,l} \prod_{s=1}^k \lambda_{f_{j,l,s}}(n+h_s)    \right) \left(\sum_{j=1}^n\sum_{l \in \mathcal{U}(j,i)} c_{j,l} \prod_{s=1}^k \lambda_{f_{j,l,s}}(n+h_s)    \right)  (\rho(n+h_i;x)+ c_2) \\
&-c_2\left(\sum_{j=1}^n\sum_{l \in \mathcal{L}(j,i)} c_{j,l} \prod_{s=1}^k \lambda_{f_{j,l,s}}(n+h_s)  +  
\sum_{j=1}^n\sum_{l \in \mathcal{U}(j,i)} c_{j,l} \prod_{s=1}^k \lambda_{f_{j,l,s}}(n+h_s)   \right)^2  \\
&=  \left(\sum_{j=1}^n\sum_{l \in \mathcal{L}(j,i)} c_{j,l} \prod_{s=1}^k \lambda_{f_{j,l,s}}(n+h_s) \right)^2 \rho(n+h_i;x)   \\
&+2 \left(\sum_{j=1}^n\sum_{l \in \mathcal{L}(j,i)} c_{j,l} \prod_{s=1}^k \lambda_{f_{j,l,s}}(n+h_s)    \right) \left(\sum_{j=1}^n\sum_{l \in \mathcal{U}(j,i)} c_{j,l} \prod_{s=1}^k \lambda_{f_{j,l,s}}(n+h_s)    \right) \rho(n+h_i;x) \\
&-c_2\left( 
\sum_{j=1}^n\sum_{l \in \mathcal{U}(j,i)} c_{j,l} \prod_{s=1}^k \lambda_{f_{j,l,s}}(n+h_s)   \right)^2 .
\end{align*}
But Lemma~\ref{lem:asymptotics} gives us asymptotic 
\begin{align*}
&\sum_{\substack{x \leq n \leq 2x \\ n \equiv b (W)}}  \rho(n+h_{i}; x) \prod_{s =1}^k \lambda_{f_{j,l,s}}(n+h_s) \lambda_{f_{j',l',s}}(n+h_s) \\ &= \Bigg((1-c_1)  f_{j,l,k}(0) f_{j',l',k}(0) \prod_{s \neq i} \int_0^\infty f'_{j,l,s}(t_s)f'_{j',l',s}(t_s) \mbox{d}t_s+o(1)\Bigg)  \left( \frac{ xW^{k-1}}{\phi(W)^{k} \log(x)^{k}}\right).
\end{align*}
Summing the lower bound for $\nu(n) \rho(n+h_i;x)$ over $[x,2x]$ and substituting the asymptotic from Lemma~\ref{lem:asymptotics} for terms involving $\rho(n+h_i;x)$, and substituting the asymptotic from Theorem~3.6 of~\cite{Polymath:2014:VSS} for terms involving $c_2$, we then get 
\begin{align} \label{asym:prop12}
\sum_{i=1}^k\sum_{\substack{x \leq n \leq 2x,  n \equiv b(W) }} \nu(n)   \rho(n+h_i;x)  =  \left((1-c_1) \sum_{i=1}^k  \mathcal{J}_i - c_2 \sum_{i=1}^k  \mathcal{K}_i + o(1) \right) \left( \frac{ xW^{k-1}}{\phi(W)^{k} \log(x)^{k}}\right).
\end{align}
Combining asymptotics (\ref{asym:prop11}) and (\ref{asym:prop12}), we finally get 
\begin{align*}
&\widetilde{N}(x;\rho,\nu)=  \frac{(1-c_1) \sum_{i=1}^k  \mathcal{J}_i - c_2 \sum_{i=1}^k  \mathcal{K}_i + o(1) }{ \mathcal{I} + o(1)  } > 1+o(1).
\end{align*} 
Hence, for sufficiently large $x$, we have the desired inequality $N(x;\rho,\nu)>0$.
\end{proof}

\section{Equidistribution estimates} \label{sec:equiest}

In this section we derive equidistribution estimates for various convolutions of coefficient sequences. These will be needed to show that some minorant for the prime indicator function (denoted by $\rho(n;x)$)  has equidistribution properties for  moduli corresponding to $T_k(\delta,\underline{A},\underline{B},\varepsilon)$, provided that the entries of $\underline{B}$ are chosen small enough:  If the entries of $\underline{B}$ are small, then the corresponding moduli, given by set $Q^*(x;\delta,\underline{A},\underline{B},\varepsilon,\varepsilon_0)$, are either smaller than $x^{1/2}$ or have a large $x^\delta$-smooth factor.  In the first case we can use the Bombieri-Vinogradov theorem, so we need to focus on moduli with a large $x^\delta$-smooth factor.

\medskip  Here the initial idea is to use the Type I/II/III estimates for $i$-tuply $x^\delta$-densely divisible integers from Polymath's paper~\cite{Polymath:2014:EDZ},  the equidistribution estimates for $x^\delta$-smooth moduli from the author's paper~\cite{Stadlmann:2023:PAP}, and well as the Type I estimate of Baker and Irving~\cite{Baker:2017:BIP}. However, there is a problem:   a big $x^\delta$-smooth factor is  not sufficient to ensure that a number is $i$-tuply $x^\delta$-densely divisible. Thus we need to prove amended versions of the equidistribution estimates listed above. Fortunately we only need to make small changes in the relevant proofs.

\subsection{Important definitions}

Since this section discusses coefficient sequences, we first define these.

\begin{defn}[Coefficient sequences]\label{def:sequence}  A  coefficient sequence  is  a  function $\alpha: \mathbb{N} \times (1, \infty) \rightarrow \mathbb{R}$ which satisfies $|\alpha(n;x)| \ll \tau(n)^{O(1)}\log(x)^{O(1)}$. 
\begin{enumerate}[{\rm (i)}]
\item $\alpha$ is said to be located at scale $N(x)$ if there are some constants $1 \ll c \ll C \ll 1$ such that   $\alpha(\,\cdot\,;x)$ is supported on $[cN(x),CN(x)]$ for every $x>1$.
\item If $\alpha$ is located at scale $N(x)$, it is said to have the Siegel-Walfisz property if 
\begin{align*}
\Bigg| \sum_{\substack{n=a (q) \\ (n,r)=1 }} \alpha(n;x)  -\dfrac{1}{\phi(q)}\sum_{\substack{ (n,qr)=1 }} \alpha(n;x) \Bigg| \ll_A \tau(qr)^{O(1)}\dfrac{ N(x) }{\log(x)^{A}}
\end{align*}
for any $x>1$, any $q, r \geq 1$, any $A >1$, and any  residue class $a$ mod $q$ with $(a,q)=1$. 
\item $\alpha$ is said to be  smooth at scale $N(x)$ if there are some constants $1\ll c \ll C \ll 1$ so that for every $x>1$, there exists  a  smooth function $\psi:\mathbb{R} \rightarrow \mathbb{C}$ supported on $[c,C]$, with $|\psi^{(j)}(t)| \ll_j \log(x)^{O_j(1)}$ for all $t \in \mathbb{R}$, such that 
$
\alpha(n;x) = \psi\left(\dfrac{n}{N}\right).
$
\end{enumerate}
\end{defn}

Further, as usually,    convolutions of sequences are defined as follows: If $\alpha: \mathbb{N} \rightarrow \mathbb{C}$ and $\beta: \mathbb{N} \rightarrow \mathbb{C}$, then
\begin{align*}
(\alpha \star \beta)(n) = \sum_{d \mid n} \alpha(d) \beta\left(\dfrac{n}{d}\right).
\end{align*}

We will now state various equidistribution estimates for Dirichlet convolutions $\alpha\star \beta$. Generalizing our earlier definition, we will use the following notion of equidistribution:

\begin{defn}[Equidistribution]\label{def:expdist}
Consider  $f: \mathbb{N} \times (1,\infty) \rightarrow \mathbb{C}$. Let $D(x;\omega,\gamma,\delta; \varepsilon)$ be a set of integers dependent on  parameters $\omega$, $\gamma$  and $\delta$, and dependent on a small parameter $\varepsilon>0$. 

For a given set of $(\omega,\gamma,\delta)$, we say $f$ has equidistribution properties for moduli in $D(x;\omega,\gamma,\delta;\varepsilon)$, provided  the following holds:  For any sufficiently small $\varepsilon >0$ and for all $x>1$  and  $A>0$,  and for all $a \in \mathbb{Z}$ with $(a,p)=1$ for $p \leq x$,
\begin{align}\label{inequ:expdist}
\sum_{\substack{ d \in D(x;\omega,\gamma,\delta;\varepsilon) \\  d \mbox{ \scriptsize  is squarefree  }  }} \Bigg| \sum_{\substack{n \in [x,2x] \\ n \equiv a (d)}} f(n;x) - \dfrac{1}{\phi(d)} \sum_{\substack{n \in [x,2x] \\(n,d)=1 }}f(n;x) \Bigg| \ll_{A,\varepsilon} \dfrac{x}{\log(x)^A}.
\end{align}
Note that the implied constant is only allowed to depend on $\omega$, $\gamma$ and $\delta$ insofar that they determine which $\varepsilon$ are sufficiently small for (\ref{inequ:expdist}) to hold.
\end{defn}

\subsection{Equidistribution ingredients} To choose a suitable $\rho(n;x)$ and $T_k(\delta,\underline{A},\underline{B},\varepsilon)$, we will use the following equidistribution estimates. Proofs follow in the next subsection.

\begin{lemma}[Polymath Type II] \label{lem:typeIIPoly} 
Suppose $f(n;x) = (\alpha \star \beta)(n;x)$, where $\alpha$ is  coefficient sequence at scale $M$ and $\beta$ is a  coefficient sequence at scale $N$ with $M(x) N(x) \asymp x$ and $N(x) \leq x^{1/2}$. Suppose also that $\beta$ has the Siegel-Walfisz property. Write $N(x) = x^\gamma$. Define the set of moduli
\begin{align*}
D_{IIa}(x;\omega,\gamma,\delta;\varepsilon) = \left\{d \in  [1, x^{1/2+2\omega}]\cap \mathbb{N}:  \mbox{There exists a divisor } r \mid d \mbox{ with } x^{\gamma -3\varepsilon-\delta} < r < x^{\gamma - 3\varepsilon} \right\}.
\end{align*}
Consider triples $(\omega,\gamma,\delta)$ which satisfy the following inequalities:
\begin{align*}
&24\omega + 7\delta -5\gamma <-2, \\
&8\omega + 3\delta -\gamma <0.
\end{align*}
For such triples  $(\omega,\gamma,\delta)$, convolution $f$ has equidistribution properties for moduli in $D_{IIa}(x;\omega,\gamma,\delta;\varepsilon)$.
\end{lemma}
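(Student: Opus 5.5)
The plan is to follow Polymath's Type II argument from~\cite{Polymath:2014:EDZ} essentially line by line. The change is that we only require one factor $r \mid d$ in a short window $(x^{\gamma-3\varepsilon-\delta}, x^{\gamma-3\varepsilon})$ instead of double dense divisibility. First I will reduce to the usual setup. Use a dyadic decomposition and Siegel--Walfisz / Bombieri--Vinogradov-type reductions to restrict to moduli $d=qr$ with $d$ squarefree and $d \in [x^{1/2-\varepsilon}, x^{1/2+2\omega}]$; smaller moduli are covered by the Bombieri--Vinogradov theorem for $\alpha\star\beta$ with $\beta$ Siegel--Walfisz. Then write $R=x^{\gamma-3\varepsilon}$ and $Q=d/r\approx x^{1/2+2\omega}/R$. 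Because $r$ is located in a window of multiplicative length $x^\delta$, every $d$ splits as $d=qr$ with $r\in(R x^{-\delta},R)$, and we may assume $(q,r)=1$ since $d$ is squarefree. The cost in counting each $d$ several times is a factor $\tau(d)^{O(1)}$, which Cauchy--Schwarz absorbs as in Lemma~\ref{lem:asymptotics}.

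Next I will remove absolute values with coefficients $c_{q,r}$ and apply Cauchy--Schwarz, keeping the sum over $q$ and $n$ outside, so that the $r$-sum is placed inside the square together with $\beta$. This leads to estimating
\begin{align*}
\sum_{q}\sum_{r_1,r_2}\sum_{n_1,n_2} c_{q,r_1}\overline{c_{q,r_2}}\,\beta(n_1)\overline{\beta(n_2)} \sum_{m \equiv a\overline{n_i} \ (q r_i)} \psi(m/M).
\end{align*}
Diagonal terms $n_1=n_2$ (more precisely, pairs with large $\gcd$) are controlled trivially provided $N$ is large compared with the square-root-type loss. This is where $8\omega+3\delta-\gamma<0$ enters, because $N=x^\gamma$ must exceed $Q$-related quantities up to the $x^\delta$ slack in $r$. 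For the off-diagonal terms I will complete the $m$-sum by Poisson summation modulo $q[r_1,r_2]$, which gives incomplete Kloosterman-type sums in $n_1,n_2$. I then expect to need a second Cauchy--Schwarz to reach sums of the form $\sum_{h}\sum_{n_1,n_2}$ with phases $e(ah\overline{n_1 r_2}/qr_1\ldots)$, which are bounded by the Weil bound for Kloosterman sums or by completion in the $n$ variables.

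The main obstacle will be the bookkeeping of exponents. After completion the savings must beat the trivial bound, and the factor $x^\delta$ by which $r$ may be smaller than $R$ enters as a loss in the length of the dual sum. This loss is exactly why $\delta$ appears with coefficients $7$ and $3$. I will verify that the final bound is $x^{1-\varepsilon'}$ precisely when $24\omega+7\delta-5\gamma<-2$, by tracking $M=x^{1-\gamma}$, $N=x^\gamma$, $R=x^{\gamma-3\varepsilon}$ and $Q=x^{1/2+2\omega-\gamma+3\varepsilon}$ through Polymath's Type II computation. Since Polymath's proof uses the dense divisibility of $d$ only to produce a factorization $d=qr$ with $r$ at a prescribed scale up to $x^\delta$, and never uses further factorization of $q$ in the Type II case, I expect the rest of their argument to carry over verbatim. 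The point to check carefully is that no step secretly uses further factorization of $q$.
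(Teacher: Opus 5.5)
Your overall plan matches the paper's. The relaxed form of Polymath's Theorem~5.8 (Lemma~\ref{lem:theorem58}) needs only one factorization $d=qr$ with $r$ in the window. The Type~II computation on pp.~51--54 of~\cite{Polymath:2014:EDZ} never factors $q$ further. The remaining change is to substitute $N=x^\gamma$ at the end, instead of Polymath's restriction to $\gamma$ near $\frac12$ (they use $\gamma>\frac12-2\omega-c$).

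However, the first Cauchy--Schwarz step you spell out is the wrong way round, and as written it does not prove the lemma. In Polymath's reduction the sums over $r$ and $m$ stay outside, and it is $q$ and $n$ that are doubled. Because $r$ is common to both copies, $mn_1\equiv mn_2\equiv a \pmod r$ forces $n_2=n_1+\ell r$ with $|\ell|\ll N/R\le x^{\delta+3\varepsilon}$. The diagonal $\ell=0$ is affordable precisely because $r<x^{\gamma-3\varepsilon}$. Poisson in $m$ modulo $r[q_1,q_2]\approx RQ^2$ then gives a dual sum of length $H\approx x^{\varepsilon}RQ^2/M\ll x^{4\omega+\delta+O(\varepsilon)}$, using $QR\ll x^{1/2+2\omega}$ and $R\gg x^{\gamma-\delta-3\varepsilon}$. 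This is why the window sits just below $N$, and your remark that the $x^{\delta}$ slack costs a factor in the dual length is true only in this arrangement.

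In your arrangement ($q$ and $m$ outside, $r_1,r_2,n_1,n_2$ inside), the congruences only give $n_1\equiv n_2 \pmod q$ with $q$ small. The Poisson modulus is $q[r_1,r_2]\approx QR^2\approx x^{1/2+2\omega+\gamma}$. This exceeds the length $M\asymp x^{1-\gamma}$ of the $m$-sum throughout the range $\gamma>\frac25$ forced by $24\omega+7\delta-5\gamma<-2$. Completing in $m$ therefore gains nothing (the dual length is about $x^{2\gamma+2\omega-1/2}$). Nothing in Polymath's Type~II computation handles sums of that shape, so the stated exponents cannot come out of it.

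Relatedly, $8\omega+3\delta-\gamma<0$ does not come from the diagonal $n_1=n_2$ of the first Cauchy--Schwarz. Both conditions come from the bound for $\Sigma_{\mathcal{Q},\mathcal{R}}$. With $r$ fixed, one applies Cauchy--Schwarz in $n$ to eliminate $\beta(n)\overline{\beta(n+\ell r)}$, so $h,q_1,q_2$ are doubled, not $n$. One then completes the now-smooth $n$-sum and obtains the two requirements $x^{1+12\omega+7\delta/2}N^{-5/2}\ll x^{-100\varepsilon}$ and $x^{8\omega+3\delta}N^{-1}\ll x^{-100\varepsilon}$, recorded below (5.36) of~\cite{Polymath:2014:EDZ}. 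Putting $N=x^\gamma$ gives exactly the two stated inequalities. The condition $4\omega+\delta-\gamma<0$ (i.e.\ $RQ^2\ll x$) required by Lemma~\ref{lem:theorem58} is implied by the second.

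The paper's bookkeeping to land on Polymath's (5.33) is also specific:
\begin{enumerate}[(i)]
\item Take $\mathcal{Q}=\{q\in[Q,2Q]\cap\mathbb{N}:(q,p)=1 \text{ for } p\le D_0\}$ and $\mathcal{R}=[R,2R]\cap\mathbb{N}$.
\item Move the $D_0$-smooth part $s\ll x^{\varepsilon}$ of $d$ into $r$, so that $x^{\gamma-\delta-3\varepsilon}<rs<x^{\gamma-2\varepsilon}$. Moduli with a large $D_0$-smooth part are negligible.
\item Fix $r$, enlarge $q_0q_1,q_0q_2\in\mathcal{Q}$ to $q_0q_1,q_0q_2\in[Q,2Q]$, and replace the absolute values by coefficients $c_{h,q_1,q_2}$.
\end{enumerate}
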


This lemma is a variant  of Theorem 2.8(iv) of Polymath~\cite{Polymath:2014:EDZ}, and   serves as  replacement for Theorem 2.8(i).

\medskip Note: In Lemma~\ref{lem:typeIIPoly}, and in all subsequent lemmas, permissible $(\omega,\gamma,\delta)$ are defined via strict inequalities. If we restrict the choice of $(\omega,\gamma,\delta)$ to triples which satisfy these inequalities with room to spare (e.g. for some fixed $\varepsilon_1$, we consider triples $(\omega,\gamma,\delta)$ with $24\omega + 7\delta -5\gamma + \varepsilon_1 <-2$ and $8\omega + 3\delta -\gamma + \varepsilon_1 <0$), then $\varepsilon$ can be chosen uniformly in $(\omega,\gamma,\delta)$. This means that we can ensure that the implied constant in the equidistribution estimate does not depend on the triple.

\begin{lemma}[Polymath Type I(ii)] \label{lem:typeI(ii)Poly} 
Suppose $f(n;x) = (\alpha \star \beta)(n;x)$, where $\alpha$ is  coefficient sequence at scale $M$ and $\beta$ is a  coefficient sequence at scale $N$ with $M(x) N(x) \asymp x$ and $N(x) \leq x^{1/2}$. Suppose also that $\beta$ has the Siegel-Walfisz property. Write $N(x) = x^\gamma$. Define the set of moduli
\begin{align*}
D_{IIb}(x;\omega,\gamma,\delta;\varepsilon) = \left\{d \in  [1, x^{1/2+2\omega}]\cap \mathbb{N}:  \begin{array}{l} 
\exists r \mid d \,\, \exists u \mid \frac{d}{r}  \mbox{ with } x^{\gamma -3\varepsilon-\delta} < r < x^{\gamma - 3\varepsilon} \\ \mbox{and } x^{1/2 -\gamma-2\omega -6 \varepsilon-\delta } < u < x^{1/2-\gamma-2\omega  -6\varepsilon}
\end{array}\right\}.
\end{align*}
Consider triples $(\omega,\gamma,\delta)$ which satisfy the following inequalities: 
 \begin{align*}
  &          24\omega +7\delta  -3\gamma  <-1,  \\     
 &        8\omega+3\delta -  \gamma    <0.
 \end{align*} 
For such triples  $(\omega,\gamma,\delta)$, convolution $f$ has equidistribution properties for moduli in $D_{IIb}(x;\omega,\gamma,\delta;\varepsilon)$.
\end{lemma}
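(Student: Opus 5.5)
The plan is to follow the proof of the Type I estimate in Theorem~2.8(ii) of Polymath~\cite{Polymath:2014:EDZ}, and to replace the hypothesis of double dense divisibility with the explicit factorisation $d = r u s$ that membership in $D_{IIb}$ supplies. Here $r$ lies in $(x^{\gamma-3\varepsilon-\delta}, x^{\gamma-3\varepsilon})$, $u$ lies in $(x^{1/2-\gamma-2\omega-6\varepsilon-\delta}, x^{1/2-\gamma-2\omega-6\varepsilon})$, and the cofactor is $s = d/(ru)$.

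\textbf{Reduction.} Split the sum over squarefree $d \in D_{IIb}$ by choosing, for each $d$, one such pair $(r,u)$. The divisor count gives at most $\tau(d)^{O(1)}$ choices, which is harmless by the usual Cauchy--Schwarz argument with the trivial $\tau$-moment bound. Next decompose $r$, $u$ and $s$ into dyadic ranges. We then bound
\[
\sum_{r\sim R}\sum_{u\sim U}\sum_{s\sim S} c_{rus}\,\Delta(\alpha\star\beta; a\ (rus)),
\]
with $|c_{rus}|\le 1$, $RUS \le x^{1/2+2\omega}$, $R \approx x^{\gamma-3\varepsilon}$ and $U \approx x^{1/2-\gamma-2\omega-6\varepsilon}$, all up to a factor $x^{\delta}$. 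This gives $S \gtrsim x^{4\omega + 9\varepsilon}$, so $s$ carries at least the size that the Polymath argument assigns to its second factor.

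\textbf{Main argument.} Run the Polymath Type I argument with the roles of the two factors played by $r$ and $us$. First use the Siegel--Walfisz property of $\beta$ to remove the main terms. Then apply Cauchy--Schwarz, keeping the $n$-variable of $\beta$ together with $r$ inside the square, and the $m$-variable of $\alpha$ together with $us$ outside. Opening the square produces moduli $r[us_1, u s_2]$, or, in the Polymath variant, $r_1 r_2$ type moduli. Complete the resulting sums with Pólya--Vinogradov and estimate the incomplete Kloosterman sums. Polymath split the modulus in that step using the dense divisibility of $d$. In our setting the split is performed directly using $u$, whose size is exactly the one needed to balance the two ranges. The narrow window of length $x^{\delta}$ for $r$ and $u$ produces the $\delta$ terms in the final conditions, and in the original proof these terms arise from the dense divisibility tolerance in the same way.

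\textbf{Numerics and main obstacle.} Collecting the exponents should reproduce the conditions $24\omega+7\delta-3\gamma<-1$ (from the Kloosterman/completion step) and $8\omega+3\delta-\gamma<0$ (from the diagonal contribution). The main obstacle is checking that nowhere does the Polymath proof use divisibility of the cofactor $s$ beyond what $r$ and $u$ provide. In particular, $s$ may contain a large prime of size up to $x^{1/4}$. My first attempt would be to keep $s$ entirely as an ``outer'' modulus, summed trivially after Cauchy--Schwarz, and verify that its size only enters through $RUS\le x^{1/2+2\omega}$. If instead the completion step needs a further factorisation of $s$, I would absorb $s$ into $u$ and recheck the exponent bookkeeping, which should lose nothing since only the product $us$ appears in the Kloosterman modulus.
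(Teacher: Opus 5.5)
Your overall frame is right: fix one pair $(r,u)$ per $d$ and feed $r$ and $q=us$ into Polymath's Theorem~5.8 reduction. But the sketch misplaces where $u$ is used and misidentifies what breaks.

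In the first Cauchy--Schwarz, $r$ and the $\alpha$-variable stay outside, while $q$ and the $\beta$-variable are duplicated. You have the roles of $r$ and $us$ swapped, although your moduli $r[us_1,us_2]$ match the correct arrangement. The result is $\Sigma_{\mathcal{Q},\mathcal{R}}$, whose inner $n$-sum carries the arbitrary weight $\beta(n)\overline{\beta(n+\ell r)}$. Since $\beta$ is only Siegel--Walfisz, nothing can be completed at this stage.

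The Type~I argument needs a second Cauchy--Schwarz that keeps $u_1,q_2,n$ outside, so the $n$-sum acquires a smooth weight, and duplicates $v$ and $h$. This is the only place where a divisor $u_1\mid q_1$ of size about $Q/H$ is needed, and hence the only role of $u$; your sketch does not contain this step. Because the $u$-window in $D_{IIb}$ does not depend on $q_0$, you only get $u_1=u/(u,q_0)$ with $q_0^{-1}x^{1/2-\gamma-2\omega-7\varepsilon-\delta}\ll U\ll x^{1/2-\gamma-2\omega-6\varepsilon}$, and the estimates must tolerate this $q_0$-loss.

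The real obstruction comes after that. Polymath bound the resulting smooth sums with the $q$-van der Corput estimate, the first inequality of their Corollary~4.16. That bound needs a further factor of $d$ of convenient size, supplied in their setting by \emph{double} dense divisibility, and $D_{IIb}$ guarantees nothing beyond $r$ and $u$. So the proof does use more divisibility than you have, but it is not divisibility of $s$ in a completion step. Absorbing $s$ into $u$ would also destroy the size $U\approx Q/H$ on which the second Cauchy--Schwarz depends.

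The missing idea is to drop van der Corput and use the second inequality of Corollary~4.16 (Lemma~\ref{lem:cor416}). This is plain completion plus Weil, and it needs no further factorisation. Take $q_0$ for $d$, $rq_0u_1[v_1,v_2]$ for $d_1$ and $q_2$ for $d_2$, and use $(c_1,\delta_1')/\delta_1'\le (h_1v_2-h_2v_1,r)/r$. The stated conditions then have to be derived from three contributions:
\begin{itemize}
\item The square-root term $(ru_1[v_1,v_2]q_2)^{1/2}$ gives $24\omega+7\delta-3\gamma<-1$.
\item The zero-frequency term with $h_1v_2\neq h_2v_1$, summed via $\sum_{k\le K}(k,r)\le K\tau(r)$, gives $8\omega+3\delta-\gamma<0$.
\item The diagonal $h_1v_2=h_2v_1$ is harmless only because of the upper bound on $u$.
\end{itemize}
So the second condition is not a diagonal condition, as you assert. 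These are the conditions produced by the weaker completion bound, so they do not come out of `collecting exponents' in Polymath's own argument. Your phrase about P\'olya--Vinogradov completion points at the right tool, but the proposal as written commits to following Polymath. It leaves the decisive exponent computation undone.
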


This lemma is a variant  of Theorem 2.8(ii) of Polymath~\cite{Polymath:2014:EDZ}.

\begin{lemma}[Baker-Irving Type I] \label{lem:typeIBI} 
Suppose $f(n;x) = (\alpha \star \beta)(n;x)$, where $\alpha$ is  coefficient sequence at scale $M$ and $\beta$ is a smooth coefficient sequence at scale $N$ with $M(x) N(x) \asymp x$. Write $N(x) = x^\gamma$. Define the set of moduli
\begin{align*}
D_I(x;\omega,\gamma,\delta;\varepsilon) =
\begin{cases}
 \left\{d \in  [1, x^{1/2+2\omega}]\cap \mathbb{N}:  \exists r \mid d \mbox{ with } x^{\gamma-\delta -3\varepsilon} < r < x^{\gamma - 3\varepsilon} \right\} \qquad \qquad \mbox{ if } \gamma \leq \frac{1}{2}, \\
  \left\{d \in  [1, x^{1/2+2\omega}]\cap \mathbb{N}:  \exists r \mid d \mbox{ with } x^{1-\gamma-\delta -3\varepsilon} < r < x^{1-\gamma - 3\varepsilon} \right\} \qquad \mbox{ if } \gamma \in (\frac{1}{2},\frac{1}{2}+2\omega+\varepsilon], \\
  [1, x^{1/2+2\omega}]\cap \mathbb{N}  \qquad\qquad\qquad\qquad\qquad\qquad\qquad\qquad\qquad\quad \qquad \,\,\, \mbox{ if } \gamma >\frac{1}{2}+2\omega+\varepsilon.
 \end{cases}
\end{align*}
Consider triples $(\omega,\gamma,\delta)$ which satisfy the following inequalities: 
\begin{align*}
&3\gamma-12\omega  -3\delta >1    \qquad  \,\, \mbox{ if }  \quad \gamma \leq \tfrac{1}{2}, \\
&68\omega + 14 \delta    <1  \qquad \qquad\mbox{ if } \quad   \gamma \in (\tfrac{1}{2},\tfrac{1}{2}+2\omega+\varepsilon].
\end{align*}
For such triples  $(\omega,\gamma,\delta)$, convolution $f$ has equidistribution properties for moduli in $D_I(x;\omega,\gamma,\delta;\varepsilon)$.
\end{lemma}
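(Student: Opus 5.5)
The plan is to reduce Lemma~\ref{lem:typeIBI} to the Type I estimate of Baker and Irving~\cite{Baker:2017:BIP}, which is itself a variant of the Type I estimates of Polymath~\cite{Polymath:2014:EDZ}. Their estimate is stated for $x^\delta$-densely divisible moduli. The point is that the argument only ever uses \emph{one} factorisation $d=qr$ with $r$ in a suitable dyadic-type window. In the Type I setting, with $\beta$ smooth at scale $N=x^\gamma$, $\gamma\le\frac12$, the factorisation needed is $d=qr$ with $r\approx x^{\gamma-3\varepsilon}$ up to a factor $x^{\delta}$. This is exactly what membership in $D_I$ provides. So I will rerun their proof and check that dense divisibility enters only through the existence of such an $r$.

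\textbf{Case $\gamma\le\frac12$.} First decompose $d=qr$, where $r$ is the divisor given by the definition of $D_I$ and $q=d/r$. By squarefreeness, $(q,r)=1$. Next apply the standard reduction from the discrepancy sum over $d$ to exponential sums. Dualise the smooth variable $n$ by Poisson summation modulo $d$, and apply Cauchy--Schwarz in the $m$-variable (coefficients $\alpha$). Keep $r$ inside and $q$ outside, as in Polymath's Type I argument. This produces incomplete Kloosterman-type sums modulo $qr$-related moduli, which are bounded by the Polymath/Baker--Irving bound (completion plus Weil/Deligne, or $q$-van der Corput). In the final exponent bookkeeping, $r$ and $q$ enter only through their sizes $x^{\gamma-3\varepsilon-\delta}<r<x^{\gamma-3\varepsilon}$ and $q\le x^{1/2+2\omega}/r$. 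Substituting these sizes, the saving beats the trivial bound exactly when $3\gamma-12\omega-3\delta>1$, with the $\delta$ loss arising from the window width of $r$.

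\textbf{Case $\gamma\in(\frac12,\frac12+2\omega+\varepsilon]$.} Here the roles swap, since $M=x^{1-\gamma}<x^{1/2}$. The factor $r$ is now placed at scale $x^{1-\gamma}$, and the Baker--Irving argument is followed with the long smooth variable $n$. The resulting condition should reduce to $68\omega+14\delta<1$ after using $\gamma\approx\frac12$. I expect this to be the main obstacle. Baker--Irving use an additional factorisation of $q$ (a second divisor, i.e.\ doubly dense divisibility) in their $q$-van der Corput step, and I need to check that this can be replaced either by a trivial choice or by an argument using only the single $r$. If that fails, the fallback is to exploit that $M$ is near $x^{1/2}$, so that only a weak saving is needed, and to choose the second factor greedily from the prime factors of $q$.

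\textbf{Case $\gamma>\frac12+2\omega+\varepsilon$.} No factorisation is needed. The smooth sum over $n\equiv a\,(d)$, with $N=x^{\gamma}>d\,x^{\varepsilon}$, is handled directly by Poisson summation. Only the zero frequency survives, and it matches the main term $\frac{1}{\phi(d)}\sum_{(n,d)=1}$ after Möbius inversion. The nonzero frequencies are negligible because $N/d\ge x^{\varepsilon}$ and $\psi$ has derivatives bounded by $\log(x)^{O(1)}$. Summing over $d\le x^{1/2+2\omega}$ with the $\alpha$-coefficients bounded by $\tau^{O(1)}\log^{O(1)}$ then gives the bound $x\log(x)^{-A}$ required by Definition~\ref{def:expdist}.
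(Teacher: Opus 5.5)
\textbf{Middle range: no argument.} The genuine gap is $\gamma\in(\frac12,\frac12+2\omega+\varepsilon]$. You only say the condition ``should reduce to'' $68\omega+14\delta<1$, and neither route you suggest reaches it.

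Keeping the long smooth variable inside the Cauchy--Schwarz does not fit the reduction of Lemma~\ref{lem:theorem58}. That reduction is set up for a retained variable at scale $x^\gamma$ with $\gamma\le\frac12$, and for a divisor $r$ within a factor $x^{\delta}$ of that scale; here $r\approx x^{1-\gamma}$. If you instead keep the factor at scale $x^{1-\gamma}$, that factor is $\alpha$, which is not smooth, so the completion bound for the $n$-sum is unavailable. Your fallback of extracting a second divisor greedily from the prime factors of $q=d/r$ fails outright: membership in $D_I$ says nothing about $q$, which may be prime.

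The paper's idea is that near $\gamma=\frac12$ the convolution is balanced, so one can apply the Type~II estimate, Lemma~\ref{lem:typeIIPoly}, with $\alpha$ and $\beta$ swapped and $\gamma$ replaced by $1-\gamma$. Its moduli set $D_{IIa}(x;\omega,1-\gamma,\delta;\varepsilon)$ is exactly the middle case of $D_I$. Its hypotheses are $24\omega+7\delta-5(1-\gamma)<-2$ and $8\omega+3\delta-(1-\gamma)<0$. Using $1-\gamma\ge\frac12-2\omega-\varepsilon$ and $\varepsilon$ small, the first becomes $68\omega+14\delta<1$, and this also implies the second. (Read literally, Lemma~\ref{lem:typeIIPoly} then asks for the Siegel--Walfisz property of the factor at scale $x^{1-\gamma}$, i.e.\ of $\alpha$; the paper does not comment on this.)

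\textbf{Range $\gamma\le\frac12$: false premise and missing computation.} Your outline has the right shape, but it rests on a premise that is false for the argument you propose to rerun. Baker and Irving do \emph{not} use only the factorisation $d=qr$. After reducing to $\Sigma_{\mathcal Q,\mathcal R}$ they bound the $n$-sum with the $q$-van der Corput inequality (the first bound of Corollary~4.16 of~\cite{Polymath:2014:EDZ}). That inequality needs a further divisor which $D_I$ does not supply, so rerunning their proof verbatim breaks down.

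The paper's fix is to use the plain completion bound, Lemma~\ref{lem:cor416}, on
\[
\sum_{n\equiv t\,(q_0)}\beta(n)\overline{\beta(n+\ell r)}\,e_{rq_0q_1}(\cdot)\,e_{q_2}(\cdot),
\]
which is legitimate because $\beta$ is smooth, with $d=q_0$, $d_1=rq_0q_1$, $d_2=q_2$.
\begin{itemize}
\item Summing over $q_1,q_2\asymp Q/q_0$ and $0<|h|\le x^{\varepsilon}RQ^2/(q_0M)$ gives $x^{3\varepsilon}(q_0,\ell)q_0^{-2}(R^{3/2}Q^5+Q^2N)/M$.
\item Beating the target $x^{-2\varepsilon}Q^2N(q_0,\ell)q_0^{-2}$ then requires $R^{3/2}Q^3\ll x^{1-5\varepsilon}$.
\item That is $3(\frac12+2\omega)-\frac32(\gamma-\delta)<1$, i.e.\ $3\gamma-12\omega-3\delta>1$.
\end{itemize}
You assert this condition without the computation that produces it. Your set-up also differs from the reduction actually used. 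There, Cauchy--Schwarz is taken with $m$ and $r$ outside and $q$ doubled into $q_0q_1,q_0q_2$. The first dualisation is in $m$ (producing $h$), not in $n$.

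\textbf{Range $\gamma>\frac12+2\omega+\varepsilon$: minor correction.} Your third case is fine in substance. With the sharp cutoff $mn\in[x,2x]$, however, the $n$-sum is not smooth, so you cannot simply discard the nonzero frequencies. Count directly instead: the error is $O(\log^{O(1)}x)$ per pair $(m,d)$. The total is then $x^{3/2+2\omega-\gamma}\log^{O(1)}x\le x^{1-\varepsilon}\log^{O(1)}x$, which suffices.
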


This lemma is a variant  of Lemma 5 of Baker and Irving~\cite{Baker:2017:BIP}.

\begin{lemma}[S. Type I] \label{lem:typeIS} 
Suppose $f(n;x) = (\alpha \star \beta)(n;x)$, where $\alpha$ is  coefficient sequence at scale $M$ and $\beta$ is a  coefficient sequence at scale $N$ with $M(x) N(x) \asymp x$ and $N(x) \leq x^{1/2}$. Suppose also that $\beta$ has the Siegel-Walfisz property. Write $N(x) = x^\gamma$. Define the set of moduli
\begin{align*}
D_{IIc}(x;\omega,\gamma,\delta;\varepsilon) = \left\{d \in  [1, x^{1/2+2\omega}]\cap \mathbb{N}:  \begin{array}{l} 
\exists r \mid d \,\, \exists u \mid \frac{d}{r} \,\, \exists d_1 \mid r \mbox{ with } x^{\gamma -3\varepsilon-\delta} < r < x^{\gamma - 3\varepsilon} \\ \mbox{and }    x^{1-\gamma-6\varepsilon-\delta}   d^{-1} < u < x^{1-\gamma-6\varepsilon}   d^{-1} \\ 
\mbox{and }  r^2 x^{2-\gamma-52\varepsilon -\delta}  d^{-4} < d_1 <  r^2 x^{2-\gamma-52\varepsilon}  d^{-4}
\end{array}\right\}.
\end{align*}
Consider triples $(\omega,\gamma,\delta)$ which satisfy the following inequalities: 
\begin{align*}
&8\omega+4\delta + 2\gamma < 1, \\
&32 \omega + 10 \delta -\gamma<0, \\
&48\omega +  16 \delta  -4\gamma  < -1.
\end{align*}
For such triples  $(\omega,\gamma,\delta)$, convolution $f$ has equidistribution properties for moduli in $D_{IIc}(x;\omega,\gamma,\delta;\varepsilon)$.
\end{lemma}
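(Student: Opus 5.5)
The plan is to follow the proof of the Type I estimate in the author's earlier paper~\cite{Stadlmann:2023:PAP}, itself a refinement of the Polymath Type I argument in~\cite{Polymath:2014:EDZ}. The one change is in the hypotheses on the moduli. There we assumed the moduli were $x^\delta$-smooth (or densely divisible), and used this to produce factorisations $d=qr$ with $r$ of prescribed size, then a further divisor $u$ of $d/r$, then a divisor $d_1$ of $r$. Here these factorisations are part of the definition of $D_{IIc}(x;\omega,\gamma,\delta;\varepsilon)$, so the task is to check that the earlier argument uses smoothness only through the existence of divisors in the windows written in the definition.

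\textbf{Step 1: reduction.} Using $N\le x^{1/2}$ and the Siegel--Walfisz property of $\beta$, apply the standard reduction (Polymath~\cite{Polymath:2014:EDZ}, Section~5) of the sum over squarefree $d\in D_{IIc}$ of the discrepancy to bounding a bilinear sum. For each $d$ we fix the factorisation $d=r\cdot u\cdot(\text{rest})$ supplied by the definition, with $x^{\gamma-3\varepsilon-\delta}<r<x^{\gamma-3\varepsilon}$. Dyadically decomposing in $r$, $u$ and the cofactor costs only powers of $\log x$. The number of admissible factorisations of each $d$ is at most $\tau(d)^{O(1)}$, which is harmless after Cauchy--Schwarz against the trivial bound, exactly as in the proof of Lemma~\ref{lem:asymptotics}.

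\textbf{Step 2: Cauchy--Schwarz and completion.} Apply Cauchy--Schwarz with the $n$-variable (from $\beta$) placed inside, as in the earlier Type I argument, then complete sums via Poisson summation. This yields incomplete Kloosterman-type sums to the moduli built from $r$ and $u$. The size of $u$, namely about $x^{1-\gamma-6\varepsilon}d^{-1}$, is chosen so that the completed sum has length essentially one period, up to a factor $x^\delta$. The divisor $d_1\mid r$, of size about $r^2x^{2-\gamma-52\varepsilon}d^{-4}$, is the one used for the $q$-van der Corput step (a second application of Cauchy--Schwarz together with the Weyl shift $h\mapsto h\,d_1$). After that step, Deligne-type bounds for the resulting exponential sums with squarefree moduli (as in~\cite{Polymath:2014:EDZ}) finish the argument.

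\textbf{Step 3: bookkeeping of exponents.} Since the windows for $r$, $u$ and $d_1$ are forced only up to a factor $x^\delta$, each loss of this kind enters the final bound with a coefficient given by the number of times that factor appears. I would verify that the diagonal term requires $32\omega+10\delta-\gamma<0$, the off-diagonal term requires $48\omega+16\delta-4\gamma<-1$, and the constraint that $d_1$ and $u$ be at least $1$ and fit inside their ranges gives $8\omega+4\delta+2\gamma<1$. I expect the main obstacle to be here. In the earlier paper the dense divisibility was used not only to produce $r,u,d_1$ but possibly also to refactor $r/d_1$ or the cofactor during the van der Corput step. One must check that no such additional divisor is ever needed, or else that it can be absorbed by the stated divisor conditions without worsening the three inequalities. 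If an extra factorisation did turn out to be needed, I would try to avoid it by putting that variable trivially into the outer sum, and I would accept any resulting loss only if it stayed within the $\varepsilon$-slack.
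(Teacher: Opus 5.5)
Your overall route (rerun the Type I argument of \cite{Stadlmann:2023:PAP}, feeding in the divisors promised by the definition of $D_{IIc}$ instead of smoothness) is the paper's. However, the check you set yourself fails. The earlier argument does not use smoothness only to produce divisors in the windows written in that definition, and this is where the real work of the proof lies. After the Cauchy--Schwarz step behind Lemma~\ref{lem:theorem58}, the two copies of the modulus are $q_0q_1$ and $q_0q_2$ with $q_0=(q_1,q_2)$. The earlier proof extracts $u$ from $q_1$ and $d_1$ from $r$ in windows that depend on $q_0$ through $H=x^{\varepsilon}RQ^2/(q_0M)$:
\begin{itemize}
\item $U\approx Q/H$, so that the cofactor $v$ has length $\approx H$, matching the $h$-sum in the second Cauchy--Schwarz. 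This, not a ``one period'' completion, is what $u$ is for.
\item $D\approx N/(x^{50\varepsilon}H^2)$.
\end{itemize}
Smoothness lets one choose these divisors after $q_0$ is known. $D_{IIc}$ supplies only a single fixed window, which is the $q_0=1$ case. Fixing a factorisation of each $d$ at the outset, as in your Step~1, does not avoid this: the fixed $u$ divides $q_0q_1$ and can share primes with $q_0$.

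What must actually be done is the following. First pass to $u_1=u/(u,q_0)$. Then one only knows $q_0^{-2}x^{-\delta-5\varepsilon}Q/H\ll U\ll q_0^{-1}x^{-5\varepsilon}Q/H$, so $V$ may be as large as $q_0x^{\delta+5\varepsilon}H$. One must also accept a $d_1$ of size only $\Delta\approx q_0^{-2}N/(x^{50\varepsilon}H^{2})$, a factor $q_0^2$ smaller than the earlier argument uses. These losses then have to be carried through the $q$-van der Corput step:
\begin{itemize}
\item the diagonal bound (Lemma~7 of \cite{Stadlmann:2023:PAP}) picks up a factor $q_0^2$, becoming $\max\{x^{\delta+10\varepsilon}H^3q_0^2,H^4\}/(w_2,m)$;
\item the ranges of $v_i$, $\Delta_1$, $m$, $\Lambda$ in the summary (Definition~7 and Lemma~8 there) change.
\end{itemize}
Finally one checks that in \eqref{wts1}--\eqref{wts3} the extra powers of $q_0$ are absorbed by the factor $q_0^2(q_0,\ell)$ in the target bound and by $H\ll x^{4\omega+\delta+7\varepsilon}/q_0$. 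The three conditions of the lemma come out of these three inequalities, from the terms $N\Delta_1/m$, $N$ and $m$ respectively. They do not come from the diagonal/off-diagonal/size split you anticipate. Your Step~3 tracks only $x^{\delta}$-losses and looks for a different obstacle (an extra refactorisation of $r/d_1$). As written, your plan therefore neither detects nor handles the $q_0$-dependence.
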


This lemma is a variant  of Theorem 1 of~\cite{Stadlmann:2023:PAP}.

\begin{lemma}[Polymath Type III] \label{lem:typeIII} 
Suppose $f(n;x) = (\alpha \star \psi_1 \star \psi_2 \star \psi_3)(n;x)$, where $\alpha$ is  coefficient sequence at scale $M$ and $\psi_1$, $\psi_2$ and $\psi_3$ are smooth coefficient sequences at scales $N_1$, $N_2$ and $N_3$ with 
\begin{align*}
&M(x) N_1(x) N_2(x) N_3(x) \asymp x, \\
&N_1(x)N_2(x), \, N_1(x)N_3(x), \, N_2(x)N_3(x) \gg x^{1-\gamma}, \\
&x^{1-2\gamma} \ll N_1(x), N_2(x), N_3(x) \ll x^{\gamma}.
\end{align*}
Define the set of moduli
\begin{align*}
D_{III}(x;\omega,\gamma,\delta;\varepsilon) = \left\{d \in  [1, x^{1/2+2\omega}]\cap \mathbb{N}:  \exists r \mid d \mbox{ with } x^{1/3+4\delta/3 -4\omega/3-\delta} < r < x^{1/3+4\delta/3 -4\omega/3} \right\}.
\end{align*}
Consider triples $(\omega,\gamma,\delta)$ which satisfy the following inequality: 
\begin{align*}
28\omega   +9\gamma   +8\delta   <4.
\end{align*}
For such triples  $(\omega,\gamma,\delta)$, convolution $f$ has equidistribution properties for moduli in $D_{III}(x;\omega,\gamma,\delta;\varepsilon)$.
\end{lemma}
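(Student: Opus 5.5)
The plan is to reduce Lemma~\ref{lem:typeIII} to the Type III estimate of Polymath~\cite{Polymath:2014:EDZ} (Theorem~2.8(iii) there, proved via the Deligne-based bounds for hyper-Kloosterman-type correlations). The only new feature is that a modulus $d$ is no longer required to be $x^\delta$-densely divisible. We only know that it has one divisor $r$ in a window of width $x^\delta$ at height roughly $x^{1/3+4\delta/3-4\omega/3}$. So the argument has two parts. First, check which factorisations of $d$ the Polymath Type III proof actually uses. Second, show that a single divisor $r$ in the stated window supplies those factorisations.

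\textbf{Step 1: the reduction.} Start as in Polymath. Fix $a$ coprime to all $p\le x$. Dyadically decompose the moduli $d\sim D$ with $D\le x^{1/2+2\omega}$, and also decompose the auxiliary divisor $r\sim R$. Remove the contribution of $d$ with abnormally many prime factors by the usual divisor-bound argument. Then apply Cauchy--Schwarz in the $\alpha$ variable, following Section~6 of~\cite{Polymath:2014:EDZ}. Write $d=rq$ and perform the ``$q$-van der Corput'' step with respect to the factor $r$ (or $q$, depending on the normalisation used there). This reduces matters to bounding incomplete sums of the ternary divisor function $\psi_1\star\psi_2\star\psi_3$ in progressions to moduli $r$ and $q$. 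These in turn reduce to bounds for the correlation sums of the hyper-Kloosterman sum ${\rm Kl}_3$, which are supplied by Deligne's theorem.

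\textbf{Step 2: the parameters.} The Polymath argument needs $d$ to split as $d=rq$ with $r$ in a prescribed range, and it needs no dense divisibility of $r$ or $q$ beyond that. This is the heart of the lemma. In their Type III proof, double dense divisibility enters only to produce one such splitting, at a scale chosen by optimising $R$. Substitute the admissible range of $R$ into their final error bound. The constraints on $N_i$ (namely $x^{1-2\gamma}\ll N_i\ll x^\gamma$ and $N_iN_j\gg x^{1-\gamma}$) should then produce the condition $28\omega+9\gamma+8\delta<4$. The window $x^{1/3+4\delta/3-4\omega/3-\delta}<r<x^{1/3+4\delta/3-4\omega/3}$ is the optimal choice for $R$ in that computation, widened by the factor $x^\delta$ so that the dyadic ranges are covered. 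The $\varepsilon$-room in the strict inequality absorbs the $x^{O(\varepsilon)}$ losses from dyadic decomposition, divisor bounds and the smoothness of the $\psi_i$.

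\textbf{Main obstacle.} The main obstacle is confirming that the Polymath Type III argument never uses a second factorisation, for instance a further splitting of $q$ in the completion-of-sums step. If it does, I would try to remove that dependence by completing sums with respect to $q$ directly, at the cost of a worse exponent. That cost would have to be compared against the slack in $28\omega+9\gamma+8\delta<4$. The same numerology would have to be redone with the exact exponents of~\cite{Polymath:2014:EDZ}.
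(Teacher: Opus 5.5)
Your plan is the route the paper takes: rerun Polymath's Type III argument and check that one divisor in a fixed window can replace dense divisibility. But the proposal stops where the proof begins. The claim you call the heart of the lemma is left as your ``main obstacle'', and your sketch of where the factorisation enters is inaccurate.

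\textbf{Where the factorisation enters.} The relevant result is Theorem~2.8(v) of~\cite{Polymath:2014:EDZ} (Section~7), not 2.8(iii). The order of operations is also different from what you describe:
\begin{enumerate}[(i)]
\item The smooth part $\psi_1\star\psi_2\star\psi_3$ is completed first, before any factorisation. This produces hyper-Kloosterman sums $\mathrm{Kl}_3(\cdot\,;d)$ to the modulus $d=q/b$, where $b=\prod_{p\mid b_1b_2b_3}p$ comes from the gcds of the dual frequencies with $q$.
\item Dense divisibility is then used exactly once. The observation below (7.17) that $d$ is $bx^{\delta}$-densely divisible is applied at the start of the bound for $T(b_1,b_2,b_3)$, to write $d=rs$ with $(bx^\delta)^{-1}S\le s\le S$ for a single $S$ of one's choosing.
\end{enumerate}
Nothing else is needed, so your fallback of completing modulo $q$ directly is unnecessary.

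\textbf{The missing $b$-step.} Your hypothesis gives a divisor $r$ of $q$, not of $d$. You must pass to $r/(r,b)$, which divides $d$ because $q$ is squarefree. It satisfies $r/(r,b)<x^{1/3+4\delta/3-4\omega/3}$ and $r/(r,b)>b^{-1}x^{1/3+\delta/3-4\omega/3}$. This loss of $b$ is harmless only because Polymath's admissible range already carries the factor $b^{-1}$ at its lower end. This step is absent from your proposal.

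\textbf{The numerology.} This is only asserted, and your explanation of the window (optimal $R$ widened for dyadic coverage) does not account for the $8\delta$.
\begin{itemize}
\item Since the divisor may lie anywhere in the window, you are forced to take $S$ at its top, $S=x^{1/3+4\delta/3-4\omega/3}$. You must check $1\le S\le x^{\delta}Q/2$, which needs $4\omega-4\delta<1$ and $2\delta-8\omega<1$.
\item Insert this $S$ into Polymath's bounds $|T|^2\le T_1T_2$. Use $Q\ll x^{1/2+2\omega}$ and $x^{3/2-3\gamma/2}\ll N_1N_2N_3\ll x^{3\gamma}$, and the convergence of the sum over $(b_1,b_2,b_3)$ (Lemma~7.5 there).
\item This yields three exponent conditions. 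The first, $\tfrac23+\tfrac{7\omega}{3}+\tfrac{3\gamma}{4}+\tfrac{2\delta}{3}<1$, is exactly $28\omega+9\gamma+8\delta<4$, and it implies the other two.
\end{itemize}
The first term of $T_2$ is linear in $S$. Placing $s$ at the balancing point $x^{1/3+\delta/3-4\omega/3}$, as dense divisibility would allow, gives $28\omega+9\gamma+2\delta<4$ instead. The extra $6\delta$ is the price of evaluating at the top of the window, not a matter of dyadic coverage.

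These two checks (the passage from $r$ to $r/(r,b)$, and the computation at this fixed $S$) are the entire content of the paper's proof. Without them your proposal is an outline rather than an argument.
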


This lemma is a variant  of Theorem 2.8(v) of Polymath~\cite{Polymath:2014:EDZ}.

\subsection{Proofs} The original proofs of the lemmas and theorems referenced above span a total of over 50 pages. Since these proofs only need to be amended in a few specific places, it is not practical to write down full proofs of the new versions listed above. Instead  we will just explain the particular changes we make. Our explanations are best read side-by-side with the original proofs. 

\subsubsection{Starting point} With the exception of Theorem 2.8(v) of Polymath~\cite{Polymath:2014:EDZ}, all results referenced above start their proofs in Section~5 of~\cite{Polymath:2014:EDZ}, and essentially proceed in exactly the same way from page 42 all the way until page 50. Although Polymath discusses $i$-tuply $x^\delta$-densely divisible integers, pages 42 to 50 really just use that the relevant moduli $m$  have a factorization $m=qr$ with $Nx^{-3\varepsilon-\delta_0} \ll r \ll Nx^{ -3\varepsilon}$. As such, we begin by stating Polymath's Theorem~5.8~\cite{Polymath:2014:EDZ} with a suitably relaxed condition on the moduli. Since this statement involves exponential sums, we first need to introduce some more notation:

\medskip 
 For $q \in \mathbb{N}$ and $n \in \mathbb{Z}$, we define $e_q(n) = \exp(2\pi i n/q)$. For $a, b \in \mathbb{Z}$, we set 
\begin{align*}
e_q\left(\dfrac{a}{b}\right) = \begin{cases} \,\,
e_q\big( \overline{a/b} \big) \quad \mbox{ if } a/b  \mbox{ is well-defined mod } q, \\ \,\, 0 \qquad\qquad\! \mbox{ otherwise,}
\end{cases}
\end{align*}
where $\overline{a/b}$ represents an integer $n$ which satisfies $n \equiv a/b$ mod $q$.

\medskip
We will need to work with exponential sums of the following form:
\begin{defn}\label{def:sigTheorem58}
For a given triple  $(\omega,\gamma,\delta)$ with  $
\gamma-4\omega  -\delta  >0$ and $\gamma \leq \frac{1}{2}$ and for $x>0$, let $Z(\omega,\gamma,\delta;x)$ denote the set of $(Q,R,q_0,b_1,b_2,\ell)$ with the following properties:
\begin{enumerate}[{\rm(i)}]
\item  $R \in (0,\infty)$ with $x^{\gamma-\delta -3\varepsilon} \leq R \leq x^{\gamma - 2\varepsilon} $.
\item $Q \in (0,\infty)$ with $x^{1/2-\varepsilon} \ll QR \ll x^{1/2+2\omega}$.
\item $q_0 \in \mathbb{N}$ with  $q_0 \ll Q$.
\item $b_1, b_2 \in \mathbb{Z}$ with $(b_1b_2,p)=1$ for all $p \leq x$.
\item $\ell \in \mathbb{Z}$ with  $1 \leq |\ell| \ll \frac{N}{R}$.
\end{enumerate}
  
Consider some choice of sets $\mathcal{Q}(Q;x,\omega,\gamma,\delta,\varepsilon) \subseteq [Q,2Q] \cap \mathbb{N}$ and $\mathcal{R}(R;x,\omega,\gamma,\delta,\varepsilon) \subseteq [R,2R] \cap \mathbb{N}$.

\medskip For a given $z=(Q,R,q_0,b_1,b_2,\ell) \in Z(\omega,\gamma,\delta;x)$, we then set  $C(n)  = 1_{\substack{\frac{b_1}{n}  \equiv \frac{b_2}{n+\ell r} \mbox{{\scriptsize  mod }} q_0}}
$ and   define 
\begin{align*}
&\Sigma_{\mathcal{Q}, \mathcal{R}}(z)= \!\!\!\!\! \sum_{\substack{ r \in \mathcal{R}(R;x,\omega,\gamma,\delta,\varepsilon) }}  \mathop{\sum\sum}\limits_{\substack{q_1, q_2 \\ q_0 q_1,q_0 q_2 \in \mathcal{Q}(Q;x,\omega,\gamma,\delta,\varepsilon) \\ (q_1,q_2)=1 \\  q_0q_1r, q_0q_2r \mbox{\begin{scriptsize}
squarefree
\end{scriptsize} }
}} \sum_{0<|h| \leq \frac{x^\varepsilon RQ^2}{q_0 M}}    \Bigg| \!\!\sum_{\substack{n \\ (n,rq_0q_1 )=1 \\ (n+\ell r, q_0q_2)=1 }} \!\!\!\!C(n)\beta(n)\overline{\beta(n+\ell r)}  \Phi_\ell(h,n,r,q_0,q_1,q_2) \Bigg|,\\
&\mbox{where } \Phi_\ell(h,n,r,q_0,q_1,q_2) =  e_r\left(\dfrac{ah}{nq_0q_1q_2}\right)e_{q_0q_1}\left(\dfrac{b_1h}{nrq_2}\right)e_{q_2}\left(\dfrac{b_2h}{(n+\ell r)rq_0q_1}\right).
\end{align*}

\end{defn}

\medskip

Now we are ready to summarize Theorem~5.8.

\begin{lemma}[Alternative version of Theorem~5.8 of~\cite{Polymath:2014:EDZ}] \label{lem:theorem58}
Suppose $f(n;x) = (\alpha \star \beta)(n;x)$, where $\alpha$ is  coefficient sequence at scale $M$ and $\beta$ is a  coefficient sequence at scale $N$ with $M(x) N(x) \asymp x$. Suppose  $\beta$ has the Siegel-Walfisz property. Write $N(x) = x^\gamma$.

\smallskip
Consider some sets $\mathcal{Q}(Q;x,\omega,\gamma,\delta,\varepsilon) \subseteq [Q,2Q] \cap \mathbb{N}$ and $\mathcal{R}(R;x,\omega,\gamma,\delta,\varepsilon) \subseteq [R,2R] \cap \mathbb{N}$.

Let $Z(\omega, \gamma, \delta; x)$ and $\Sigma_{\mathcal{Q},\mathcal{R}}(Q,R,q_0,b_1,b_2,\ell)$ be as defined in Definition~\ref{def:sigTheorem58}.

Let $D_0 = \exp(\log(x)^{1/3})$ and suppose that $q \in \mathcal{Q}(Q;x,\omega,\gamma,\delta,\varepsilon)$ implies that $(q,p)=1$ for $p \leq D_0$.

\medskip
 Suppose  $D(x;\omega,\gamma,\delta;\varepsilon)$ is a set of moduli with 
\begin{align*}
D(x;\omega,\gamma,\delta;\varepsilon)\subseteq \left\{d \in  [1, x^{1/2+2\omega}]\cap \mathbb{N}: \begin{array}{l} \exists R \exists Q \mbox{ with }  \log_2(Q) \in \mathbb{Z} \mbox{ and }  \log_2(R) \in \mathbb{Z} \\  \mbox{and } x^{\gamma-\delta -3\varepsilon} \leq R \leq x^{\gamma - 2\varepsilon}  \mbox{ so that } \\ d=qr \mbox{ for some }  q \in \mathcal{Q}(Q;x,\omega,\gamma,\delta,\varepsilon) \mbox{ and } r\in \mathcal{R}(R;x,\omega,\gamma,\delta,\varepsilon), \\
 \mbox{or } \prod_{\substack{ p \mid d,  p \leq D_0}} p > \exp(\log^{2/3} x)    
\end{array} \right\}.
\end{align*}
Consider triples   $(\omega,\gamma,\delta)$ with  $
\gamma-4\omega  -\delta  >0$ and $\gamma \leq \frac{1}{2}$. Suppose a collection of such triples   satisfies   
\begin{align*}
\sup_{(Q,R,q_0,b_1,b_2,\ell) \in Z(\omega,\gamma,\delta;x)}\Sigma_{\mathcal{Q},\mathcal{R}}(Q,R,q_0,b_1,b_2,\ell) \ll_\varepsilon x^{-2\varepsilon} Q^2RN (q_0, \ell) q_0^{-2}.
\end{align*}
Then convolution $f$ has equidistribution properties for moduli in $D(x;\omega,\gamma,\delta;\varepsilon)$.
\end{lemma}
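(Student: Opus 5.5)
The argument is a walk through pages 42--50 of Section~5 of~\cite{Polymath:2014:EDZ}, with the dense divisibility hypothesis replaced by the factorisation $d=qr$, $q\in\mathcal{Q}(Q;\dots)$, $r\in\mathcal{R}(R;\dots)$ assumed in the statement. The task is to check that this argument only ever uses three things: the existence of such a factorisation with $R$ in the stated range, the coprimality of $q$ to small primes $p\le D_0$, and the exponential sum bound on $\Sigma_{\mathcal{Q},\mathcal{R}}$.

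First I would dispose of moduli $d$ with $\prod_{p\mid d,\,p\le D_0}p>\exp(\log^{2/3}x)$. These are handled exactly as in~\cite{Polymath:2014:EDZ}: such $d$ have a large $D_0$-smooth part, their number is very small, and the trivial bound together with the divisor-bound estimates of Polymath gives a total contribution $\ll x\log(x)^{-A}$. For the remaining $d$, I would split dyadically in $Q$ and $R$ (there are $O(\log^2x)$ pairs) and reduce to bounding, for fixed $(Q,R)$,
\[
\sum_{r\in\mathcal{R}}\ \sum_{q\in\mathcal{Q}} \Big|\sum_{n\equiv a\,(qr)}f(n)-\frac{1}{\phi(qr)}\sum_{(n,qr)=1}f(n)\Big|.
\]

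Next I follow Polymath's chain of reductions. Using the Siegel--Walfisz property of $\beta$ and the fact that $q$ has no prime factors below $D_0$, I reduce to $r$ and $q$ coprime and squarefree. I insert a coefficient $c_{q,r}$ of modulus one, and apply Cauchy--Schwarz in the variable $m$ of $\alpha$ together with the sum over $r$ and $q$. This leaves a dispersion sum over pairs $q_1,q_2\in\mathcal{Q}$ sharing a gcd $q_0$. After completing the $m$-sum with Poisson summation, the zero frequency cancels against the main terms; this uses the Siegel--Walfisz property of $\beta$ again, now with modulus $r q_0 q_1 q_2$, where only the divisor bounds are needed. The nonzero frequencies $0<|h|\le x^\varepsilon RQ^2/(q_0M)$ produce exactly the phases $\Phi_\ell$ of Definition~\ref{def:sigTheorem58}, with $\ell$ coming from writing $n_2=n_1+\ell r$ in the $n$-variables. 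The diagonal $\ell=0$ has size $\ll x^{-\varepsilon}$ times the trivial bound, since $R\ge x^{\gamma-\delta-3\varepsilon}$ and $\gamma-4\omega-\delta>0$. At no point do these manipulations use any structure of $\mathcal{Q}$ or $\mathcal{R}$ beyond their being subsets of dyadic ranges. Throughout I would verify that Polymath's step ``choose $r$ a divisor of $d$ in the given range'' is replaced by the given factorisation, and that the later use of dense divisibility, namely factoring $q$ for $q$-van der Corput, is hidden inside the hypothesis on $\Sigma$, so it is not needed here.

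Finally I insert the hypothesis $\Sigma\ll x^{-2\varepsilon}Q^2RN(q_0,\ell)q_0^{-2}$. Summing over $q_0\ll Q$ and over $\ell$ with the weight $(q_0,\ell)$ costs only $\log$ factors and $x^{o(1)}$ divisor losses. This saves $x^{-\varepsilon}$ over the trivial bound and yields~(\ref{inequ:expdist}). I expect the main obstacle to be bookkeeping. One must confirm that the off-diagonal reduction in Polymath needs $(q_1,q_2)=1$ and squarefreeness only in the form built into $\Sigma$. One must also check that the uniformity in $a$ (coprime to all $p\le x$), and the $b_1,b_2$ arising from it, match condition (iv) of $Z$. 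Where the original text appeals to dense divisibility to choose $R$, I would cite the assumed factorisation instead.
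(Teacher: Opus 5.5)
Your plan matches the paper's proof. You walk through pp.~42--50 of~\cite{Polymath:2014:EDZ} with the class $a$ (hence $b_1,b_2$) coprime to all $p\le x$, bound moduli with a large $D_0$-smooth part trivially, and replace the sums over densely divisible $q,r$ by sums over $\mathcal{Q},\mathcal{R}$. No small-prime removal is needed, since $\mathcal{Q}$ already excludes those primes. You then relabel so that $q_0q_1,q_0q_2\in\mathcal{Q}$ and insert the hypothesis on $\Sigma_{\mathcal{Q},\mathcal{R}}$.

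One point is misplaced. Your list of the ``only three things'' the argument uses omits the size bound $RQ^2\ll x$. Polymath records this bound early in the argument (p.~45), and in their parameter range it comes for free. Here it follows from $RQ\ll x^{1/2+2\omega}$ and $R\gg x^{\gamma-\delta-3\varepsilon}$ via $RQ^2\ll x^{1+4\omega-\gamma+\delta+3\varepsilon}$. This is exactly where the hypothesis $\gamma-4\omega-\delta>0$ is needed.

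That hypothesis does not control the diagonal $\ell=0$. The diagonal is smaller than the expected size of the dispersion sum by a factor of about $R/N$ (up to divisor bounds). That saving comes from the upper bound $R\le x^{\gamma-2\varepsilon}$, not from the lower bound on $R$ or from $\gamma>4\omega+\delta$.

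Also drop the remark that the zero frequency uses Siegel--Walfisz at modulus $rq_0q_1q_2$. The Siegel--Walfisz property is vacuous at moduli that large.
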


\begin{proof}
Only   minor changes need to be made on pages 42 to 50 of~\cite{Polymath:2014:EDZ}: Throughout, Polymath considers congruence classes $a$ with $(a,p)=1$ for $p \in I$, where $I$ is some bounded set of integers (dependent on $x$). We simply choose $I=[1,x]$, and replace $P_I$ by $\prod_{p \leq x} p$.

\medskip

On page 43 it is observed that the contribution of $d$ with $ \prod_{\substack{ p \mid d,  p \leq D_0}} p > \exp(\log^{2/3} x)$ can be bounded trivially. Hence we only think about $d=qr$ with  $q \in \mathcal{Q}(Q;x,\omega,\gamma,\delta,\varepsilon)$ and $ r\in \mathcal{R}(R;x,\omega,\gamma,\delta,\varepsilon) $. 

\medskip
On page 44, Polymath notes that any $i$-tuply $x^\delta$-densely divisible integer has a factorization $d=qr$ with $ x^{\gamma-\delta -3\varepsilon} \leq r \leq  x^{\gamma - 3\varepsilon}  $. Next prime factors $p \leq D_0$ are removed from $q$, which lowers the size of $q$ by no more than $x^\varepsilon$. From then on, the proof works with sums over $q \in D_J^{(j)}(x^{\delta+o(1)}) \cap [Q,2Q]$ and $r \in D_I^{(k)}(x^{\delta+o(1)}) \cap [R,2R]$, which are $x^{\delta+o(1)}$-densely divisible integers. We replace these by sums over $q \in \mathcal{Q}(Q;x,\omega,\gamma,\delta,\varepsilon)$ and $ r\in \mathcal{R}(R;x,\omega,\gamma,\delta,\varepsilon) $, where   $ x^{\gamma-\delta -3\varepsilon} \leq R \leq  x^{\gamma - 2\varepsilon}  $. We already assumed that elements of $\mathcal{Q}(Q;x,\omega,\gamma,\delta,\varepsilon)$ have no prime factors $p \leq D_0$, so we do not need to remove any factors. The condition $\log_2(Q), \log_2(R) \in \mathbb{Z}$ was included to highlight that we use dyadic decomposition and only need to consider $\ll \log(x)^2$ choices of $Q$ and $R$. Finally, we may continue to assume that $qr$ is squarefree. 

\medskip On page 45, bound $RQ^2 \ll x$ is observed. Since $RQ \ll x^{1/2+2\omega}$ and $RQ^2 \ll x^{1+4\omega}/R \ll x^{1+4\omega -\gamma +\delta +3\varepsilon}$, this gives us the condition $\gamma-4\omega  -\delta  >0$. (Recall that we choose $\varepsilon$ very small depending on $(\omega,\gamma,\delta)$. So we do not keep track of $\varepsilon$ in the inequalities we record.) 

\medskip On page 46, the Cauchy-Schwarz inequality is applied, giving us two copies of $q$, denoted by $q_1$ and $q_2$. On page 48, the gcd $q_0 = (q_1,q_2)$ is introduced. However, later on page 49, relabeling occurs: $q_1$ is replaced by $q_0 q_1$ and $q_2$ is replaced by $q_0q_2$ with $(q_1,q_2)=1$. Thus from now on we   sum over  $q_0q_1 \in \mathcal{Q}(Q;x,\omega,\gamma,\delta,\varepsilon)$ and $q_0q_2 \in \mathcal{Q}(Q;x,\omega,\gamma,\delta,\varepsilon)$. Having made these small changes, inequality (5.31) in Theorem~5.8 of~\cite{Polymath:2014:EDZ} now instead reads $\Sigma_{\mathcal{Q}, \mathcal{R}}(Q,R,q_0,b_1,b_2,\ell) \ll_\varepsilon x^{-2\varepsilon} Q^2RN (q_0, \ell) q_0^{-2}.$
\end{proof}

\subsubsection{Proof of Lemma~\ref{lem:typeIIPoly}} The proof of Theorem 2.8(iv) of~\cite{Polymath:2014:EDZ} continues on pages 51 to 54 and is very straightforward: we do not need to factorize moduli any further.  However, it only applies to $\gamma \geq \frac{1}{2} -2\omega$. Making a very minor change, we extend the proof to smaller values of $\gamma$, which serves as a replacement for Theorem 2.8(i). (Theorem 2.8(i) is a bit stronger, but this strength is not needed for our proofs, since $\gamma$ close to $\frac{1}{2}$ behave well and do not constitute a critical parameter range.)

\begin{proof}[Proof of Lemma~\ref{lem:typeIIPoly}]
We first apply Lemma~\ref{lem:theorem58} with a specific choice of $\mathcal{Q}$ and $\mathcal{R}$: 
Recall first that in Lemma~\ref{lem:typeIIPoly} we work with moduli $$D_{IIa}(x;\omega,\gamma,\delta;\varepsilon) = \left\{d \in  [1, x^{1/2+2\omega}]\cap \mathbb{N}:  \exists r \mid d \mbox{ with } x^{\gamma-\delta -3\varepsilon} < r < x^{\gamma - 3\varepsilon} \right\}.$$
So $d \in D_{IIa}(x;\omega,\gamma,\delta;\varepsilon) $ implies $d=qr$  with $x^{\gamma-\delta -3\varepsilon} < r < x^{\gamma - 3\varepsilon}$. Suppose now that $\prod_{p \mid d, p \leq D_0} p \leq \exp(\log^{2/3} x)$. Then $q=q^* s$ for some $q^*$ with $(q^*,p)=1$ for $p \leq D_0$ and some $s$ with $s \ll x^{\varepsilon}$. Thus we can factorise $d$ as $d=q^* (rs)$ where $x^{\gamma-\delta -3\varepsilon} < rs < x^{\gamma - 2\varepsilon}$. 
Hence    we can simply take
\begin{align*}
&\mathcal{Q}(Q;x,\omega,\gamma,\delta,\varepsilon)=\{q\in [Q,2Q] \cap \mathbb{N}: (q,p)=1 \mbox{ for } p \leq D_0\},\\
&\mathcal{R}(R;x,\omega,\gamma,\delta,\varepsilon)=[R,2R] \cap \mathbb{N}.
\end{align*}
Applying Lemma~\ref{lem:theorem58}, we must now show that 
\begin{align}\label{inequ:basic.poly(iv)}
\sup_{(Q,R,q_0,b_1,b_2,\ell) \in Z(\omega,\gamma,\delta;x)}\Sigma_{\mathcal{Q},\mathcal{R}}(Q,R,q_0,b_1,b_2,\ell) \ll_\varepsilon x^{-2\varepsilon} Q^2RN (q_0, \ell) q_0^{-2}.
\end{align}
Fixing a value of $r$, replacing   condition  $q_0q_1, q_0 q_2 \in \mathcal{Q}(Q;x,\omega,\gamma,\delta,\varepsilon)$  by $q_0q_1, q_0 q_2 \in [Q,2Q]$ (which increases the LHS), and replacing the absolute value signs in $\Sigma_{\mathcal{Q},\mathcal{R}}(Q,R,q_0,b_1,b_2,\ell)$ by constants $c_{h,q_1,q_2}$, we end up exactly with condition (5.33) on page 51 of~\cite{Polymath:2014:EDZ}. The remaining proof of Theorem 2.8(iv) does not use any further factorization of moduli. Hence  we can simply copy the arguments on pages 51 to 54 to show that (\ref{inequ:basic.poly(iv)}) holds provided that
\begin{align*}
&\frac{x^{1+12\omega+7\delta/2}}{N^{5/2}} \ll_\varepsilon x^{-100\varepsilon}, \\
&\frac{x^{8\omega + 3\delta}}{N}\ll_\varepsilon x^{-100\varepsilon}.
\end{align*}
(These inequalities can be found just below (5.36) on page 53.) At this point Polymath uses the assumption $\gamma > \frac{1}{2}-2\omega -c$. However, we do not want to impose such a restriction and instead directly substitute $N=x^\gamma$. The inequalities listed above hold for sufficiently small $\varepsilon$, provided the following is true:
\begin{align*}
&24\omega + 7\delta -5\gamma <-2, \\
&8\omega + 3\delta -\gamma <0.
\end{align*}
Finally, recall that Lemma~\ref{lem:theorem58} had condition $ 4\omega +\delta  -\gamma<0$. However, this condition is already implied by $8\omega + 3\delta -\gamma <0$, so we can discard it.
\end{proof}

\subsubsection{Exponential sum bounds}

After arriving at Theorem~5.8,   both Polymath~\cite{Polymath:2014:EDZ} and Baker and Irving~\cite{Baker:2017:BIP} apply exponential sum bounds which use the $q$-van der Corput method. (Theorem~2.8(iv), discussed above, is the one exception.) More specifically, they use the first inequality in Corollary~4.16 of~\cite{Polymath:2014:EDZ}.   However, using the $q$-van der Corput method necessitates the presence of another factor of convenient size in $d$. To keep conditions on the moduli simple, we would rather avoid this. Hence we will use a different exponential sum bound: the second inequality in Corollary~4.16 of~\cite{Polymath:2014:EDZ}, recorded below. This simpler (but potentially weaker) bound does not require us to consider further factorizations of $d$. 

\begin{lemma}[Corollary~4.16 of~\cite{Polymath:2014:EDZ}, inequality 2]\label{lem:cor416}
Let $N \geq 1$ and suppose $\psi_N(n)$ is a smooth coefficient sequence at scale $N$. Let $d_1, d_2 \in \mathbb{Z}$ be squarefree, and let $a, c_1, c_2, l_1, l_2 \in \mathbb{Z}$. Suppose $d \mid [d_1, d_2]$. Write $\delta_i = d_i/(d_1,d_2)$ and  $\delta'_i = \delta_i/(d,\delta_i)$. Then for any $\varepsilon>0$, 
\begin{align*}
\left|\sum_{n \equiv a (d)} \psi_N(n) e_{d_1}\!\left(\frac{c_1}{n+l_1} \right) e_{d_2}\!\left(\frac{c_2}{n+l_2} \right)\right| \ll_\varepsilon [d_1,d_2]^\varepsilon \left( \left(\frac{[d_1,d_2]}{d}\right)^{1/2} + \frac{1}{d} \frac{(c_1, \delta'_1)}{\delta'_1} \frac{(c_2, \delta'_2)}{\delta'_2} N \right).
\end{align*}
\end{lemma}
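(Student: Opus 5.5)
This bound is quoted from Corollary~4.16 of~\cite{Polymath:2014:EDZ}, so the plan is to reconstruct its proof rather than find a new one. The first step is completion of sums. The phase $n\mapsto e_{d_1}(c_1/(n+l_1))e_{d_2}(c_2/(n+l_2))$ is periodic modulo $[d_1,d_2]$, so I write $n=a+dm$. The sum then runs over $m$ against a smooth weight at scale $N/d$, and the phase is periodic in $m$ modulo $[d_1,d_2]/d$. Since $d_1,d_2$ are squarefree and $d\mid[d_1,d_2]$, the modulus $[d_1,d_2]$ is squarefree and the Chinese remainder theorem splits the phase cleanly. The part of the modulus dividing $d$ contributes only a constant, because there $n\equiv a$ is fixed. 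Hence only the modulus $[d_1,d_2]/d$ matters.

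The second step is Poisson summation in $m$. This gives a sum over dual frequencies $h$ of the Fourier transform of $\psi_N$, evaluated at scale $d/N\cdot h/([d_1,d_2]/d)$ and multiplied by a complete exponential sum modulo $[d_1,d_2]/d$. The smoothness of $\psi_N$ (derivatives bounded by $\log^{O(1)}x$) makes the frequencies $|h|\gg [d_1,d_2]^{1+\varepsilon}/N$ negligible. I then treat $h=0$ and $h\neq 0$ separately.

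Each complete sum factors by the Chinese remainder theorem into local sums modulo primes $p\mid [d_1,d_2]/d$. For $p$ dividing exactly one of $\delta'_1,\delta'_2$, the local sum is a Ramanujan-type sum of $e_p(c/(n+l)+hn)$. For $p$ dividing $(d_1,d_2)$ but not $d$, it is a sum of two inverse phases plus a linear phase. In the cases that are not degenerate, each such local sum is a rational function sum, and Weil's bound (Kloosterman-type) gives $O(p^{1/2})$. Degenerate cases, where the rational function is constant, give size $p$. Multiplying these local bounds, with the divisor-bound factor $[d_1,d_2]^\varepsilon$ absorbing the constants $2^{\omega}$, gives $([d_1,d_2]/d)^{1/2}$ per frequency. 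Summing over the $\ll [d_1,d_2]^{1+\varepsilon}/N$ frequencies and multiplying by the normalisation $N/[d_1,d_2]$ yields the first term.

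The $h=0$ term is where the second term comes from. Its local sums modulo $p\mid\delta'_i$ are Ramanujan sums, of size $1$ unless $p\mid c_i$, in which case they have size $p$. They therefore contribute $(c_i,\delta'_i)$ in total. Multiplying by $N/[d_1,d_2]$ and simplifying gives the term $\frac{1}{d}\frac{(c_1,\delta'_1)}{\delta'_1}\frac{(c_2,\delta'_2)}{\delta'_2}N$.

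I expect the main obstacle to be the local analysis at primes dividing $(d_1,d_2)$. There the two phases $c_1/(n+l_1)$ and $c_2/(n+l_2)$ can coincide modulo $p$, or cancel against each other. I need to check that every such degenerate prime is absorbed into the bound. The plan is to verify that a prime giving a full factor $p$ at $h\neq0$ is compensated, either because it is counted within $([d_1,d_2]/d)^{1/2}$ through the frequency sum, or because its contribution moves into the $h=0$-type main term. Since this is exactly the bookkeeping done in~\cite{Polymath:2014:EDZ}, I will follow their case analysis there.
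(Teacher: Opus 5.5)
Your route (restrict to $n\equiv a \pmod d$, complete by Poisson summation modulo $q:=[d_1,d_2]/d$, factor the complete sums $\hat K(h)$ by the Chinese remainder theorem, apply Weil prime by prime, read off the second term from $h=0$) is the standard argument behind Polymath's Corollary~4.16, which the paper only cites. However, the step ``multiplying these local bounds gives $([d_1,d_2]/d)^{1/2}$ per frequency'' is false, and repairing it is exactly the part you leave open. For $h\neq 0$ and a prime $p\mid q$ with $p\mid h$, the linear phase vanishes modulo $p$. The local factor is then the $h=0$ local factor, which has size $\asymp p$ in the degenerate cases: $p\mid\delta_i'$ with $p\mid c_i$, or $p\mid (d_1,d_2)$ with the two inverse phases vanishing or cancelling. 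So $|\hat K(h)|$ can exceed $q^{1/2}$ by a factor as large as $(h,q)^{1/2}$. If instead $p\nmid h$, the local phase has a non-zero linear term, is never constant, and Weil gives $O(p^{1/2})$. The correct pointwise bound is therefore $|\hat K(h)|\ll q^{1/2+\varepsilon}(h,q)^{1/2}$. The missing idea is to average over $h$ rather than bound each frequency separately. Use the gcd-sum estimate $\sum_{1\le |h|\le H}(h,q)\le 2H\tau(q)$ (Polymath's Lemma~1.4, quoted right after this statement) with $H\asymp [d_1,d_2]^{1+\varepsilon}/N$; there are no non-zero frequencies when $H<1$. With the normalisation $N/[d_1,d_2]$, the non-zero frequencies then contribute $\ll [d_1,d_2]^{O(\varepsilon)}q^{1/2}$, which is the first term.

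Your other suggestion, that degenerate contributions ``move into the $h=0$-type main term'', is not what happens: the second term comes from $h=0$ alone. Your $h=0$ paragraph treats only primes dividing $\delta_i'$. You also need the primes $p\mid(d_1,d_2)$ with $p\nmid d$, where the local sum may have size $p$; bound these trivially by $p$. Since $[d_1,d_2]=d\,\delta_1'\delta_2'\,g$ with $g=(d_1,d_2)/(d,(d_1,d_2))$, these factors cancel exactly against $g$ in $N/[d_1,d_2]$. What remains is $\frac{N}{d}\frac{(c_1,\delta_1')}{\delta_1'}\frac{(c_2,\delta_2')}{\delta_2'}$, which is why only the $\delta_i'$ appear. (Minor point: with the paper's $\log(x)^{O(1)}$ derivative bounds on $\psi_N$, the size of $\psi_N$ and the Fourier truncation cost a factor $\log(x)^{O(1)}$. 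That factor is absorbed by $x^{\varepsilon}$ but not by $[d_1,d_2]^{\varepsilon}$ when $[d_1,d_2]$ is small; this is harmless in the paper's applications.)
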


Since this exponential sum bound uses gcds, the following simple lemma will also be useful: 
\begin{lemma}[Lemma 1.4 of~\cite{Polymath:2014:EDZ}] Let $q \in \mathbb{N}$ and $K \geq 1$. Then 
\begin{align*}
\sum_{1 \leq k \leq K} (k,q) \leq K \tau(q).
\end{align*}
\end{lemma}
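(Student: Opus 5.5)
The bound in question is Lemma 1.4 of \cite{Polymath:2014:EDZ}: $\sum_{1 \leq k \leq K} (k,q) \leq K \tau(q)$. I will prove it by a divisor-sum decomposition, grouping the values of $k$ according to which divisors of $q$ they are divisible by.

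First I bound the gcd pointwise by a sum over divisors. For any $k$,
\begin{align*}
(k,q) \;\leq\; \sum_{\substack{d \mid (k,q)}} d \;=\; \sum_{\substack{d \mid q \\ d \mid k}} d.
\end{align*}
This holds because the term $d=(k,q)$ already appears on the right, and every other term is nonnegative. A cleaner alternative is the exact identity $(k,q)=\sum_{d\mid (k,q)}\phi(d)$, which comes from $\sum_{d\mid n}\phi(d)=n$. The two routes give the same final bound, so I will use whichever reads better.

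Next I sum over $1\le k\le K$ and interchange the order of summation:
\begin{align*}
\sum_{1\le k\le K}(k,q)\;\le\;\sum_{d\mid q} d\cdot\#\{1\le k\le K: d\mid k\} \;=\;\sum_{d\mid q} d\left\lfloor \frac{K}{d}\right\rfloor\;\le\;\sum_{d\mid q} K \;=\; K\tau(q).
\end{align*}
The step $d\lfloor K/d\rfloor\le K$ is immediate, and the number of divisors $d\mid q$ is $\tau(q)$ by definition. If I use the $\phi$ identity instead, I get $\phi(d)\lfloor K/d\rfloor\le K$, which is the same bound.

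No step here is a real obstacle. The only point to watch is the choice of pointwise majorant for $(k,q)$. It must be nonnegative and expressed as a sum over common divisors, so that swapping the sums produces the counting function $\lfloor K/d\rfloor$. With that choice the bound $d\lfloor K/d\rfloor\le K$ absorbs each factor $d$ and leaves exactly one copy of $K$ per divisor of $q$.
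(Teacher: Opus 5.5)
Your proof is correct: the pointwise bound $(k,q)\le\sum_{d\mid (k,q)} d$, the interchange of summation, and $d\lfloor K/d\rfloor\le K$ give exactly $K\tau(q)$. The paper gives no proof of its own and simply quotes this as Lemma~1.4 of Polymath; your divisor-sum interchange is the standard argument for it, so there is nothing to add.
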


\subsubsection{Proof of Lemma~\ref{lem:typeI(ii)Poly}} The proof of Theorem 2.8(ii) of~\cite{Polymath:2014:EDZ} involves a further factorization of $d=qr$ into $d=q_0uvr$, which needs to be reflected in our choice of $\mathcal{D}$, $\mathcal{Q}$ and $\mathcal{R}$. In~\cite{Polymath:2014:EDZ}, the required size of $u$ changes depending on $q_0$, which is rather inconvenient. We thus carefully replace the  range for $u$ by a fixed interval, independent of $q_0$. Additionally (as mentioned) above, we need to replace inequality 1 of Corollary 4.16 by inequality 2, so that we can simplify the factorization conditions further.

\begin{proof}[Proof of Lemma~\ref{lem:typeI(ii)Poly}] 
Recall first that in Lemma~\ref{lem:typeI(ii)Poly} we work with moduli \begin{align*}
D_{IIb}(x;\omega,\gamma,\delta;\varepsilon) = \left\{d \in  [1, x^{1/2+2\omega}]\cap \mathbb{N}:  \begin{array}{l} 
\exists r \mid d \,\, \exists u \mid \frac{d}{r} \mbox{ with } x^{\gamma -3\varepsilon-\delta} < r < x^{\gamma - 3\varepsilon} \\ \mbox{and } x^{1/2 -\gamma-2\omega -6 \varepsilon-\delta } < u < x^{1/2-\gamma-2\omega  -6\varepsilon}
\end{array}\right\}.
\end{align*}
Hence    we can  take
\begin{align*}
&\mathcal{Q}(Q;x,\omega,\gamma,\delta,\varepsilon)=\left\{q\in [Q,2Q] \cap \mathbb{N}: 
 \begin{array}{l}  (q,p)=1 \mbox{ for } p \leq D_0 \\ \mbox{and }
\exists u \mid q \mbox{ with }x^{1/2 -\gamma-2\omega -7 \varepsilon-\delta } < u < x^{1/2-\gamma-2\omega  -6\varepsilon} \\
\end{array}\right\},\\
&\mathcal{R}(R;x,\omega,\gamma,\delta,\varepsilon)=[R,2R] \cap \mathbb{N}.
\end{align*}
Applying Lemma~\ref{lem:theorem58}, we must now show that 
\begin{align}\label{inequ:basic.poly(ii)}
\sup_{(Q,R,q_0,b_1,b_2,\ell) \in Z(\omega,\gamma,\delta;x)}\Sigma_{\mathcal{Q},\mathcal{R}}(Q,R,q_0,b_1,b_2,\ell) \ll_\varepsilon x^{-2\varepsilon} Q^2RN (q_0, \ell) q_0^{-2}.
\end{align}
We follow the steps on pages 57 to 59 of~\cite{Polymath:2014:EDZ}, making changes as follows: On page 57 Polymath uses that $q_0q_1$  is $x^{\delta+o(1)}$-densely divisible to deduce that $q_1$ has a factor $u_1$ between $q_0^{-1} x^{-\delta-2\varepsilon} Q/H$ and $ x^{-2\varepsilon} Q/H$. We instead have $q_0q_1 \in \mathcal{Q}(Q;x,\omega,\gamma,\delta,\varepsilon)$ and thus get $q_0q_1=u v$ with
$$x^{1/2 -\gamma-2\omega -7 \varepsilon-\delta } < u < x^{1/2-\gamma-2\omega  -6\varepsilon}.$$
Recalling that $q_0q_1r$ is squarefree and writing $u_1=u/(u,q_0)$ and $v_1 = v/(v,q_0)$, we then have $q_1 =u_1 v_1$ with 
$q_0^{-1} x^{1/2 -\gamma-2\omega -7 \varepsilon-\delta } < u_1 < x^{1/2-\gamma-2\omega  -6\varepsilon}.$ 
Thus we replace bound (5.39)  on page 57 by
\begin{align*}
&q_0^{-1} x^{1/2 -\gamma-2\omega -7 \varepsilon-\delta }  \ll U \ll x^{1/2-\gamma-2\omega  -6\varepsilon}.
\end{align*} 
Of course we still have $UV \asymp Q/q_0$. 
(Note: The condition $\gamma < 1/2 - 2\omega $ ensures that the upper bound on $U$ is greater than $1$, provided $\varepsilon$ is sufficiently small.) \medskip

Continuing on, we can follow the exact same steps as Polymath to arrive at (5.42) on page 58. Summarizing their results up to this point, we  now need to show the following:
\begin{align}\label{inequ:poly(ii)aim}
  \Upsilon_2&:=\sum_{u_1 \asymp U} \sum_{q_2 \asymp Q/q_0}  
 \mathop{ \sum\sum}_{\substack{ v_1,v_2\asymp V \\ (u_1v_1v_2,q_0q_2)= 1}} \mathop{\sum\sum}_{1\leq |h_1|,|h_2|\leq \frac{x^\varepsilon RQ^2}{q_0 M}}
 \sup_{t (q_0)} \left|
  \sum_{n \equiv t (q_0)} \psi_N(n) \Phi_{\ell}(h_1,n, r,q_0,u_1v_1, q_2)
\overline{ \Phi_{\ell}(h_2,n, r,q_0,u_1v_2,q_2)} \right|\\
&\ll_\varepsilon  x^{-10\varepsilon} q_0^{-1}Q^2 V N.
\nonumber
\end{align}
At this point Polymath applies inequality 1 of Corollary~4.16. We instead apply inequality 2 (Lemma~\ref{lem:cor416}). Like Polymath, we   use  $q_0$ in the place of $d$,  
$r q_0u_1[v_1,v_2]$ in the place of $d_1$ and $q_2$ in the place of $d_2$, and have  $(c_2, \delta'_2)/\delta'_2 \leq 1$ and $(c_1, \delta'_1)/\delta'_1 \leq (c_1, r)/r = (h_1 v_2-h_2v_1,r)/r$. Hence 
 \begin{align*}
 \Upsilon_2 &\ll_\varepsilon x^\varepsilon \sum_{u_1 \asymp U} \sum_{q_2 \asymp Q/q_0}  
 \mathop{ \sum\sum}_{\substack{ v_1,v_2\asymp V \\ (u_1v_1v_2,q_0q_2)= 1}} \mathop{\sum\sum}_{1\leq |h_1|,|h_2|\leq \frac{x^\varepsilon RQ^2}{q_0 M}}   
  \left( \left(r u_1[v_1,v_2]q_2\right)^{1/2} + \frac{N}{q_0} \frac{ (h_1 v_2-h_2v_1,r)}{r}   \right)\\ 
  &\ll_\varepsilon x^{2\varepsilon}  q_0^{-3}  U^{3/2}  
V^3     R^{5/2}Q^{11/2}  M^{-2}   
  + x^{4\varepsilon}\sum_{u_1 \asymp U} \sum_{q_2 \asymp Q/q_0}  \sum_{|\Delta| \ll \frac{x^\varepsilon RQ^2V}{q_0 M}}    \frac{N(\Delta,r)}{q_0 r}  
 \mathop{ \sum }_{\substack{ v \asymp V  }} \mathop{\sum }_{1\leq |h  |\leq \frac{x^\varepsilon RQ^2}{q_0 M}}    \tau(\Delta-hv) \\
   &\ll_\varepsilon x^{2\varepsilon}  q_0^{-3}  U^{3/2}  
V^3     R^{5/2}Q^{11/2}  M^{-2}   
  + x^{10\varepsilon} q_0^{-4} UV^2 R Q^5 N M^{-2} + x^{10\varepsilon} q_0^{-3}UV RQ^3 NM^{-1}.
 \end{align*}
 The bound (\ref{inequ:poly(ii)aim}) is therefore satisfied if the following inequalities are true: 
 \begin{align*}
  & q_0^{-2}  U^{3/2}  V^2     R^{5/2}Q^{7/2}  M^{-2} N^{-1}   \ll_\varepsilon x^{-100\varepsilon},  \\     
 &  q_0^{-3} UV  R Q^3   M^{-2} \ll_\varepsilon x^{-100\varepsilon}, \\
 &  q_0^{-2}U  RQ  M^{-1}\ll_\varepsilon x^{-100\varepsilon}.
 \end{align*}
Recall now that $RQ \ll x^{1/2+2\omega}$ and $R \gg x^{\gamma -3\varepsilon-\delta}$  and $q_0^{-1} x^{1/2 -\gamma-2\omega -7 \varepsilon-\delta }  \ll U \ll x^{1/2-\gamma-2\omega  -6\varepsilon}$ and $UV \asymp Q/q_0$.  Hence it suffices to have (for  sufficiently small $\varepsilon$):
  \begin{align*}
   & - \tfrac{1}{2}(\tfrac{1}{2} -\gamma -2 \omega -\delta)   +\tfrac{11}{2} (\tfrac{1}{2}+2\omega)-3(\gamma-\delta) + (\gamma -2)  <0,  \\     
 &    4 (\tfrac{1}{2} + 2\omega) -3(\gamma-\delta)-2(1- \gamma) <0, \\
 & (\tfrac{1}{2}-\gamma -2\omega - 6\varepsilon)+  (\tfrac{1}{2}+2\omega)- (1-\gamma) <0.
 \end{align*}
 Rearranging, we obtain our proposed conditions
 \begin{align*}
  &          24\omega +7\delta  -3\gamma  <-1,  \\     
 &        8\omega+3\delta -  \gamma    <0.
 \end{align*} 
  The  condition $ 4\omega +\delta  -\gamma<0$ of Lemma~\ref{lem:theorem58} is again already implied by $8\omega + 3\delta -\gamma <0$.
\end{proof}

\subsubsection{Proof of Lemma~\ref{lem:typeIBI}}  Baker and Irving~\cite{Baker:2017:BIP} use the assumption that coefficient sequence $\beta$ is smooth and  apply inequality 1 of Corollary~4.16 of~\cite{Polymath:2014:EDZ} directly to $\Sigma_{\mathcal{Q},\mathcal{R}}$. We instead use inequality 2. 

\begin{proof}[Proof of Lemma~\ref{lem:typeIBI}] We first consider $\gamma \leq \frac{1}{2}$. 
Recall that we work with moduli $$D_I(x;\omega,\gamma,\delta;\varepsilon) = \left\{d \in  [1, x^{1/2+2\omega}]\cap \mathbb{N}:  \exists r \mid d \mbox{ with } x^{\gamma-\delta -3\varepsilon} < r < x^{\gamma - 3\varepsilon} \right\}.$$ 
Hence we can simply take
\begin{align*}
&\mathcal{Q}(Q;x,\omega,\gamma,\delta,\varepsilon)=\{q\in [Q,2Q] \cap \mathbb{N}: (q,p)=1 \mbox{ for } p \leq D_0\},\\
&\mathcal{R}(R;x,\omega,\gamma,\delta,\varepsilon)=[R,2R] \cap \mathbb{N}.
\end{align*}
Applying Lemma~\ref{lem:theorem58} with this choice of $\mathcal{Q}$ and $\mathcal{R}$, we must now show that 
\begin{align*}
\sup_{(Q,R,q_0,b_1,b_2,\ell) \in Z(\omega,\gamma,\delta;x)}\Sigma_{\mathcal{Q},\mathcal{R}}(Q,R,q_0,b_1,b_2,\ell) \ll_\varepsilon x^{-2\varepsilon} Q^2RN (q_0, \ell) q_0^{-2}.
\end{align*}
Like in the proof of Lemma~3 of~\cite{Baker:2017:BIP}, we observe that by fixing $r \in [R,2R]$, this problem can be reduced to showing that
\begin{align*}
&  \mathop{\sum\sum}\limits_{\substack{q_1, q_2 \\ q_0 q_1,q_0 q_2 \in \mathcal{Q}(Q;x,\omega,\gamma,\delta,\varepsilon) \\ (q_1,q_2)=1 \\  q_0q_1r, q_0q_2r \mbox{\begin{scriptsize}
squarefree
\end{scriptsize} }
}} \sum_{0<|h| \leq \frac{x^\varepsilon RQ^2}{q_0 M}}  (q_0,\ell) \sup_{ t(q_0)}   \Bigg| \sum_{\substack{n  \equiv t (q_0) }} \!\!\!\! \beta(n)\overline{\beta(n+\ell r)}   e_{rq_0q_1}\left(\dfrac{z h}{nq_2}\right) e_{q_2}\left(\dfrac{b_2h}{(n+\ell r)rq_0q_1}\right) \Bigg|  \\ &\ll_\varepsilon x^{-2\varepsilon} Q^2N (q_0, \ell) q_0^{-2},
\end{align*} 
for a suitable choice of $z=z(a,b_1,r,q_0,q_1,q_2)$ with $(z,rq_0q_1)=1$. 
\medskip 

We now apply Lemma~\ref{lem:cor416} with $\beta(n)\overline{\beta(n+\ell r)}$ in the place of $\psi_N(n)$, $t$ in the place of $a$, $q_0$ in the place of $d$, $r q_0q_1$ in the place of $d_1$ and $q_2$ in the place of $d_2$. Further, we take $zh/q_2$ in the place of $c_1$, $b_2h/(rq_0q_1)$ in the place of $c_2$, $0$ in the place of $l_1$ and $\ell r$ in the place of $l_2$. This are the same choices as in~\cite{Baker:2017:BIP}, and thus we also get $[d_1,d_2]= rq_0q_1q_2$ and $\delta'_1 = rq_1$ and $\delta'_2=q_2$. Further, $(c_1, \delta'_1)= (h,rq_1)$ and $(c_2, \delta'_2)=(h,q_2)$.  Hence we find that
\begin{align*}
 &\mathop{\sum\sum}\limits_{\substack{q_1, q_2 \\ q_0 q_1,q_0 q_2 \in \mathcal{Q}(Q;x,\omega,\gamma,\delta,\varepsilon) \\ (q_1,q_2)=1 \\  q_0q_1r, q_0q_2r \mbox{\begin{scriptsize}
squarefree
\end{scriptsize} }
}} \sum_{0<|h| \leq \frac{x^\varepsilon RQ^2}{q_0 M}}  (q_0,\ell) \sup_{ t(q_0)}   \Bigg| \sum_{\substack{n  \equiv t (q_0) }} \!\!\!\! \beta(n)\overline{\beta(n+\ell r)}   e_{rq_0q_1}\left(\dfrac{z h}{nq_2}\right) e_{q_2}\left(\dfrac{b_2h}{(n+\ell r)rq_0q_1}\right) \Bigg|  \\ 
&\ll_\varepsilon 
\mathop{\sum\sum}\limits_{\substack{q_1, q_2 \\  q_0 q_1,q_0 q_2 \in \mathcal{Q}(Q;x,\omega,\gamma,\delta,\varepsilon) \\ (q_1,q_2)=1 \\  q_0q_1r, q_0q_2r \mbox{\begin{scriptsize}
squarefree
\end{scriptsize} }
}} \sum_{0<|h| \leq \frac{x^\varepsilon RQ^2}{q_0 M}}  (q_0,\ell) (rq_0q_1q_2)^\varepsilon \left( ( rq_1q_2)^{1/2} +   \frac{(h,rq_1q_2)}{rq_0q_1q_2} N\right) \\
&\ll_\varepsilon  x^{3\varepsilon}
\mathop{\sum\sum}\limits_{\substack{q_1, q_2 \\ q_0 q_1,q_0 q_2 \in \mathcal{Q}(Q;x,\omega,\gamma,\delta,\varepsilon) \\ (q_1,q_2)=1 \\  q_0q_1r, q_0q_2r \mbox{\begin{scriptsize}
squarefree
\end{scriptsize} }
}}  (q_0,\ell)  \left( \frac{R^{1/2}Q}{q_0}+   \frac{q_0 N}{RQ^2}  \right)  \left( \frac{  RQ^2}{q_0 M}\right)  \ll_\varepsilon 
x^{3\varepsilon} \frac{(q_0,\ell)}{q_0^2}  \left( \frac{R^{3/2}Q^5}{ M}+   \frac{ Q^2 N}{M}  \right).  
\end{align*}
To get the desired upper bound $\ll_\varepsilon x^{-2\varepsilon} Q^2N (q_0, \ell) q_0^{-2}$, we thus  simply need
\begin{align*}
    \left( \frac{R^{3/2}Q^3}{ M N}+   \frac{  1}{M}  \right)
\ll_\varepsilon x^{-5\varepsilon}   .  
\end{align*}
Recalling $NM \asymp x$ and $RQ \ll x^{1/2+2\omega}$ and $R \gg x^{\gamma-\delta-3\varepsilon}$, and choosing $\varepsilon$ sufficiently small (depending on $(\omega,\gamma,\delta)$), the inequality holds provided that
\begin{align*}
3(1/2+2\omega) - 3/2(\gamma-\delta ) <1.
\end{align*}
This rearranges to condition $3\gamma-12\omega  -3\delta >1 $ and concludes the treatment of $\gamma \leq \frac{1}{2}$.

\medskip Now we consider $\gamma \in [\frac{1}{2},\frac{1}{2} + 2\omega +\varepsilon]$.    If $24\omega + 7 \delta - 5(1-\gamma) < -2$ and $8 \omega + 3\delta -(1-\gamma)<0$, we can  apply our Lemma~\ref{lem:typeIIPoly} (with the roles of $\alpha$ and $\beta$ swapped and $\gamma$ replaced by $1-\gamma$). Since $\gamma \leq \frac{1}{2} + 2\omega +\varepsilon$, these inequalities are satisfied provided that 
\begin{align*}
\min\{\tfrac{3}{5} - \tfrac{7}{5} \delta  -\tfrac{24}{5}\omega, 1 -8\omega - 3\delta \} > \tfrac{1}{2}+2\omega.
\end{align*}
Rearranging, the condition becomes
\begin{align*}
&68\omega + 14 \delta    <1.
\end{align*}

\medskip Finally, if $\gamma > \frac{1}{2} + 2\omega +\varepsilon$, we can (like Baker and Irving) apply the results of pages 21 and 22 of Polymath~\cite{Polymath:2014:EDZ}:  for such large $\gamma$ we have the desired equidistribution estimates  for any $d \leq x^{1/2+2\omega}$, we do not need any factorization conditions.

\medskip
Overall we thus have the desired equidistribution properties if $68\omega + 14 \delta    <1$ and $3\gamma-12\omega  -3\delta >1 $.
\end{proof}

\subsubsection{Proof of Lemma~\ref{lem:typeIS}}   Theorem~1 of~\cite{Stadlmann:2023:PAP} only applies to $x^\delta$-smooth numbers. Again we extract much weaker factorization requirements from its proof to obtain Lemma~\ref{lem:typeIS}. Fortunately, the original proof  does not involve  inequality 1 of Corollary~4.16 of~\cite{Polymath:2014:EDZ}, so although we do again change some factor sizes a bit (depending on $q_0$), calculations only change minimally.

\begin{proof}[Proof of Lemma~\ref{lem:typeIS}]
Recall that we work with moduli
\begin{align*}
D_{IIc}(x;\omega,\gamma,\delta;\varepsilon) = \left\{d \in  [1, x^{1/2+2\omega}]\cap \mathbb{N}:  \begin{array}{l} 
\exists r \mid d \,\, \exists u \mid \frac{d}{r} \,\, \exists d_1 \mid r \mbox{ with } x^{\gamma -3\varepsilon-\delta} < r < x^{\gamma - 3\varepsilon} \\ \mbox{and }    x^{1-\gamma-6\varepsilon-\delta}   d^{-1} < u < x^{1-\gamma-6\varepsilon}   d^{-1} \\ 
\mbox{and }  r^2 x^{2-\gamma-100\varepsilon -\delta}  d^{-4} < d_1 <  r^2 x^{2-\gamma-100\varepsilon}  d^{-4}
\end{array}\right\}.
\end{align*}
We thus choose the following sets $\mathcal{Q}(Q,R;x,\omega,\gamma,\delta,\varepsilon)$ and $\mathcal{R}(Q,R;x,\omega,\gamma,\delta,\varepsilon)$:
\begin{align*}
&\mathcal{Q}(Q,R;x,\omega,\gamma,\delta,\varepsilon)=\left\{q\in [Q,2Q] \cap \mathbb{N}: 
 \begin{array}{l}  (q,p)=1 \mbox{ for } p \leq D_0 \\ \mbox{and }
\exists u \mid q \mbox{ with }  x^{1-\gamma-6\varepsilon-\delta}   (QR)^{-1} \ll u \ll x^{1-\gamma-6\varepsilon}   (QR)^{-1}\\
\end{array}\right\},\\
&\mathcal{R}(Q,R;x,\omega,\gamma,\delta,\varepsilon)=\left\{r\in [R,2R] \cap \mathbb{N}: 
 \begin{array}{l}   
\exists d_1 \mid r \mbox{ with }   x^{2-\gamma-52\varepsilon -\delta}  (Q^4R^2)^{-1} \ll d_1 \ll    x^{2-\gamma-52\varepsilon}  (Q^4R^2)^{-1}
\end{array}\right\}.
\end{align*}
For any $d \in D_{IIc}(x;\omega,\gamma,\delta;\varepsilon)$ with 
$\prod_{p \mid d, p \leq D_0} p \leq \exp(\log^{2/3} x)$ there exist  $Q$ and $R$ with $\log_2(Q) \in \mathbb{Z}$ and $\log_2(R) \in \mathbb{Z}$    and $ x^{\gamma -3\varepsilon-\delta} \ll R \ll x^{\gamma - 2\varepsilon} $ so that $d=qr$ for some $q \in  \mathcal{Q}(Q,R;x,\omega,\gamma,\delta,\varepsilon)$ and some $r \in \mathcal{R}(Q,R;x,\omega,\gamma,\delta,\varepsilon)$.   
\medskip

We now follow the proof of Theorem~1 of~\cite{Stadlmann:2023:PAP} and make   small alterations. The original proof starts on  pages~40 to~43 in the appendix of~\cite{Stadlmann:2023:PAP}. These initial steps follow Polymath's proof of Theorem~2.8(ii) rather closely, so we are already familiar with the necessary changes: In~\cite{Stadlmann:2023:PAP},  $d \mid P(x^\delta)$ is used to deduce that $d=qr$ for some $r \in [R,2R]$ with $x^{-4\varepsilon-\delta}N \ll R \ll x^{-2\varepsilon}N$. Although we might not have   $d \mid P(x^\delta)$,   the properties of $D(x;\omega,\gamma,\delta;\varepsilon)$  ensure that we can also factorize like this. In particular, we  can  replace summation conditions like $(q_1 \asymp Q$, $r \asymp R$ and $q_1r \mid P(x^\delta))$ by alternative conditions ($q_1 \in  \mathcal{Q}(Q,R;x,\omega,\gamma,\delta,\varepsilon)$ and $r \in \mathcal{R}(Q,R;x,\omega,\gamma,\delta,\varepsilon)$ and $q_1r$ squarefree).  We also replace condition $(ab_1b_2,P(x^\delta))=1$ by   assumption $(ab_1b_2, P(x))=1$.

\medskip On page~42 the original proof then uses $q_1 \mid P(x^\delta)$ to extract a factor of size $U$ from $q_1$. At this point, we instead have  $q_0q_1 \in  \mathcal{Q}(Q,R;x,\omega,\gamma,\delta,\varepsilon)$ and thus know that $q_0q_1 = u  v $ for some $u$ with  $ x^{1-\gamma-6\varepsilon-\delta}   (QR)^{-1} \ll u \ll x^{1-\gamma-6\varepsilon}   (QR)^{-1}$. We then write $u_1 =u/(u,q_0)$ and $v_1=v/(v,q_0)$ and get $q_1 = u_1 v_2$    for some $u_1 \asymp U$  with  $ q_0^{-1}x^{1-\gamma-6\varepsilon-\delta}   (QR)^{-1} \ll U \ll x^{1-\gamma-6\varepsilon}   (QR)^{-1}$. Recalling the notation $H=x^\varepsilon RQ^2/(q_0 M)$, this condition can be rewritten as 
 $$ q_0^{-2}x^{-\delta - 5\varepsilon} Q/H \ll U \ll q_0^{-1}x^{ - 5\varepsilon} Q/H$$
 Hence we also get  $x^{5\varepsilon} H \ll V \ll q_0 x^{\delta+5\varepsilon} H$. 
We then go to the main text. Our first change is to look at Lemma~3 (the summary of Polymath) and change the ranges of $U$, $V$ and $R$ in Definition~3 on page 10 accordingly.  

\medskip On page 12 of~\cite{Stadlmann:2023:PAP}, the   proof  of Theorem 1 then uses $r \mid P(x^\delta)$ to extract a factor $d_1$ from $r$, this time of size $D = N/(x^{50\varepsilon} H^2)$. We instead recall that $r \in \mathcal{R}(R;x,\omega,\gamma,\delta,\varepsilon)$ and hence  get $r=d_1 r_1 $ for some $d_1 \asymp \Delta$ with  $x^{2-\gamma-52\varepsilon -\delta}  (Q^4R^2)^{-1} \ll \Delta \ll    x^{2-\gamma-52\varepsilon}  (Q^4R^2)^{-1}$. This can be rewritten as
$$ 
\frac{ x^{-\delta} N}{q_0^2 x^{ 50\varepsilon  } H^2}   \ll \Delta \ll   \frac{   N}{q_0^2 x^{ 50\varepsilon  } H^2}.  
$$
In particular, we have   replaced $D\approx N/H^2$ by $D \approx q_0^{-2} N/H^2$, so that  the value of $D$ no longer depends on $q_0$. 
On pages $12$ to $17$ of~\cite{Stadlmann:2023:PAP}, the $q$-van der Corput process is carried out. Although we have slightly changed the ranges of  $R$, $U$ and $\Delta$, the arguments on   pages $12$ to $17$ can be copied with minimal alterations. We do need to be a bit careful when treating   diagonal terms, which are covered by Lemma~7  on page 17 of~\cite{Stadlmann:2023:PAP}: because we increased $V$ by a factor $q_0$, various bounds now include an extra factor $q_0^2$, and so we need to replace $\max\{\frac{x^{\delta+10\varepsilon }H^3 }{(w_2,m)}, \frac{H^4}{(w_2,m)}\}$ by $\max\{\frac{x^{\delta+10\varepsilon }H^3 q_0^2}{(w_2,m)}, \frac{H^4}{(w_2,m)}\}$ in the upper bound on $\Sigma_3$. This extra factor $q_0^2$ is carried through to later lemmas in the paper. 

\medskip Definition 7 and Lemma 8 on page 20 of~\cite{Stadlmann:2023:PAP} summarize results up to this point. Due to extra factors $q_0$, we need to make various amendments in this summary: $v_1, v_2 \ll x^{\delta + 5\varepsilon} H$ becomes $v_1, v_2 \ll q_0 x^{\delta + 5\varepsilon} H$, while $\frac{N}{x^{\delta+55\varepsilon} H^2} \ll \Delta_1 \ll \frac{N}{x^{55\varepsilon} H^2}$ becomes 
 $\frac{N}{q_0^2x^{\delta+55\varepsilon} H^2} \ll \Delta_1 \ll \frac{N}{q_0^2x^{55\varepsilon} H^2}$. Further , $\frac{RQ^2H}{q_0 (v_1,v_2)\Delta_1} \ll m \ll \frac{x^\delta RQ^2H}{q_0 (v_1,v_2)\Delta_1}$ becomes $\frac{RQ^2H}{q_0 (v_1,v_2)\Delta_1} \ll m \ll \frac{x^\delta RQ^2H}{(v_1,v_2)\Delta_1}$ and finally $|\Lambda| \ll \frac{1}{w_1(v_1,v_2)} x^{\delta+5\varepsilon} H^2$ becomes $|\Lambda| \ll q_0 \frac{1}{w_1(v_1,v_2)} x^{\delta+5\varepsilon} H^2$. Fortunately, most of the remaining proof does not refer to specific parameter ranges for $\Delta_1$, $\Lambda$ and $m$, so from page 21 all the way until page 32 we can now proceed with minimal changes. However, it is convenient to notice that in Lemma~10 on page~23, we have the definition $\Delta^* = \min\{\frac{N}{\Lambda x^{5\varepsilon}}, \Delta_1\}$. For our new choice of $\Delta_1$ and for $\varepsilon$ sufficiently small compared to $\delta$, we always have $\frac{N}{\Lambda x^{5\varepsilon}} \gg \frac{w_1 (v_1,v_2) N}{q_0 x^{\delta +10\varepsilon} H^2  } \gg \frac{ N}{q_0 x^{55\varepsilon} H^2  }  \ll \Delta_1$ and can simply choose $\Delta^* = \Delta_1$. 
 
 \medskip To finish the proof, we now just need to amend the computations on page~33 of~\cite{Stadlmann:2023:PAP} with the extra factors $q_0$ we introduced. The equations (3.30), (3.31) and (3.32) which need to be verified now look as follows:
\begin{align} \label{wts1}
&\dfrac{N\Delta_1}{ m}  \ll   \min\left\{\frac{1}{x^{\delta+10\varepsilon}H^3 q_0^2},\frac{1}{H^4}\right\}\frac{(v_1,v_2)q_0^2(q_0,\ell)  N \Delta_1}{ x^{\delta+131\varepsilon}H^2}, \\ \label{wts2}
 &N \ll  \min\left\{\frac{1}{x^{\delta+10\varepsilon}H^3 q_0^2},\frac{1}{H^4}\right\}\frac{(v_1,v_2)q_0^2(q_0,\ell)  N \Delta_1}{ x^{\delta+131\varepsilon}H^2 }, \\ \label{wts3}
&m \ll  \min\left\{\frac{1}{x^{\delta+10\varepsilon}H^3 q_0^2},\frac{1}{H^4}\right\}\frac{(v_1,v_2)q_0^2(q_0,\ell)  N \Delta_1}{ x^{\delta+131\varepsilon}H^2}.
\end{align} 
We begin with (\ref{wts1}). This condition rearranges to
\begin{align} \label{wts11}
\left( \frac{ x^{\delta+131\varepsilon}}{(v_1,v_2)q_0^2(q_0,\ell)  }\right) \dfrac{\max\left\{ q_0^2x^{\delta+10\varepsilon}H^5,H^6\right\}}{ m}  \ll  1. 
\end{align}
Recalling     $m \gg \tfrac{RQ^2H}{q_0 (v_1,v_2)\Delta_1} \gg \tfrac{ q_0 x^{55\varepsilon}RQ^2H^3}{ (v_1,v_2)N} \gg \tfrac{q_0 x^{54\varepsilon}  M H^4}{(v_1,v_2)N}$, we  get that the LHS of (\ref{wts11}) is bounded  by 
\begin{align*} 
\left( \frac{ x^{\delta+131\varepsilon}}{(v_1,v_2)q_0^2(q_0,\ell)  }\right) \dfrac{\max\left\{q_0^2 x^{\delta+10\varepsilon}H^5,H^6\right\}}{ m}  \ll   x^{-1+\delta+77\varepsilon}N^2 \max\left\{x^{\delta+10\varepsilon}H,H^2\right\} \ll  x^{-1+8\omega+3\delta+100\varepsilon}  N^2. 
\end{align*}
This gives us the condition $8\omega+4\delta + 2\gamma < 1$.

\medskip
Next we consider (\ref{wts2}). Rearranging, we find that this condition holds if
\begin{align*}
\frac{x^{\delta+131\varepsilon}}{q_0^2\Delta_1}\max\left\{q_0^2x^{\delta+10\varepsilon}H^5,H^6\right\} \ll  1.
\end{align*}
Recalling that $\Delta_1  \gg \frac{N}{q_0^2x^{\delta+55\varepsilon} H^2}   $ and $H \ll \tfrac{x^{4\omega+\delta+7\varepsilon}}{q_0}$, we get that  (\ref{wts2}) holds if $32 \omega + 10 \delta -\gamma<0 $.
 
\medskip 

 Finally, we consider (\ref{wts3}). Recall that  $m \ll  \tfrac{x^\delta RQ^2H}{  (v_1,v_2)\Delta_1}  \ll \frac{q_0 x^\delta M H^2}{(v_1,v_2)\Delta_1}$. So (\ref{wts3}) is true if 
$$\frac{ x^{1+2\delta+131\varepsilon } }{N^2 \Delta_1^2} \max\left\{q_0^2x^{\delta+10\varepsilon}H^7,H^8\right\}\ll  1.$$
 Using $
 H \ll \tfrac{x^{4\omega+\delta+7\varepsilon}}{q_0}$, we see that (\ref{wts3}) holds if the following two inequalities are satisfied: 
 \begin{align} \label{twofinalinequalities}
 \frac{x^{1+32\omega+  10\delta+200\varepsilon}   }{N^2} \left(\frac{ x^{5\varepsilon}\Lambda}{\Delta_1N}\right)  \ll 1 \quad \mbox{ and } \quad  \frac{x^{1+32\omega+  10\delta+200\varepsilon}   }{N^2} \left(\frac{1}{\Delta_1^2}\right) \ll  1.
\end{align}
Using  $\Lambda \ll x^{\delta+5\varepsilon} H^2$ and  $\Delta_1 \gg \frac{N}{x^{\delta+55\varepsilon}H^2}$, it suffices to have
$ 48\omega +  16 \delta  -4\gamma  < -1 $.
\end{proof}

\subsubsection{Proof of Lemma~\ref{lem:typeIII}}

Theorem~2.8(v) of~\cite{Polymath:2014:EDZ} requires that moduli $d$ can be factorized as $d=qr$ with $r \asymp R$ for many different sizes of $R$. This is a bit inconvenient, and so we change the proof to  work with a fixed value of $R$.  Regarding notation, $\gamma$ in this paper corresponds to $\frac{1}{2}-\sigma$ in~\cite{Polymath:2014:EDZ}. 

\begin{proof}[Proof of Lemma~\ref{lem:typeIII}]
The proof of Theorem~2.8(v)  begins on page~85 of~\cite{Polymath:2014:EDZ}, but its reduction to exponential sums does not need any factorization of moduli, and so we will mainly focus on the end of the proof, pages 90 to 93. 
Prior to page 90, factorization is mentioned once, below (7.17). There we have $b= \prod_{p \mid b_1b_2b_3} p$ and it is observed that $q=bd$ implies $d \in \mathcal{D}_I(bx^\delta)$. We instead recall that in Lemma~\ref{lem:typeIIPoly} we work with  $$D_{III}(x;\omega,\gamma,\delta;\varepsilon) = \left\{q \in  [1, x^{1/2+2\omega}]\cap \mathbb{N}:  \exists r \mid q \mbox{ with } x^{1/3+4\delta/3 -4\omega/3-\delta} < r < x^{1/3+4\delta/3 -4\omega/3} \right\}.$$
We thus define the set
\begin{align*}
\mathcal{D}'(b;x;\omega,\gamma,\delta) =\left\{d \in  \mathbb{N}:  \exists r \mid d \mbox{ with }  b^{-1} x^{1/3+4\delta/3 -4\omega/3-\delta} < r < x^{1/3+4\delta/3 -4\omega/3} \right\}
\end{align*}
Consider a squarefree $q \in D(x;\omega,\gamma,\delta;\varepsilon)$. We   have $q=rs$  for some $r$ with $ x^{1/3+4\delta/3 -4\omega/3-\delta} < r < x^{1/3+4\delta/3 -4\omega/3}$. If  also $q=bd$, we   take $r'=r/(r,b)$ and $s'=s/(s,b)$, so that $d= r' s'$ with $ b^{-1}x^{1/3+4\delta/3 -4\omega/3-\delta} < r' < x^{1/3+4\delta/3 -4\omega/3}$. Hence we replace  condition $d \in \mathcal{D}_I(bx^\delta)$ by condition $d \in \mathcal{D}'(b;x;\omega,\gamma,\delta)$.

\medskip Summarizing results up to page 89,  the goal of pages 90 to 93  is as follows: We must show that
\begin{align}\label{inequ:thm28aim}
\Sigma'_2&=\frac{N_1N_2N_3}{Q^2} \sum_{(b_1,b_2,b_3)} \frac{b_1^\flat b_2^\flat b_3^\flat}{b^2} T(b_1,b_2,b_3) \ll_\varepsilon x^{1-3\varepsilon}   \\
&\mbox{where } T(b_1,b_2,b_3) = \sum_{0 < |\ell|\leq  \frac{x^{3\varepsilon/2} Q^3}{b_1b_2b_3 N}}  
 \Bigg|\sum_{\substack{d \asymp Q/b \\ d\in \mathcal{D}'(b;x;\omega,\gamma,\delta) \\ d \mbox{\begin{scriptsize}
 squarefree \end{scriptsize}   }   \\(b\ell,d)=1}} \eta'_{bd} \sum_{(m,bd)=1} \alpha(m)  \mbox{Kl}_3\left(\frac{a\ell b_1b_2b_3}{b^3m};d \right) \Bigg|
\nonumber
\end{align}
Here $x^{1/2-\varepsilon} \ll Q \ll x^{1/2+2\omega}$,  $\eta'_{bd}$ is some sequence of coefficients supported on short interval $\mathcal{I}(Q)$ and $b= \prod_{p \mid b_1b_2b_3} p$ for some $b_i \in \mathbb{N}$ with $b \ll Q$ and $b_1b_2b_3 \ll x^{3\varepsilon/2} Q^3/N$. Further, $b_i^\flat$ is the largest squarefree divisor of $b_i$ and $b= (b_1b_2b_3)^\flat$. 

\medskip On page 90, Polymath begins by using $d \in \mathcal{D}_I(bx^\delta)$ to deduce that $d=rs$ with   $(bx^\delta)^{-1} S \leq s \leq S$. Of course, we can use $d \in \mathcal{D}'(b;x;\omega,\gamma,\delta)$ to deduce the same with $S= x^{1/3+4\delta/3 -4\omega/3}$. This choice of $S$ satisfies the requirement $1 \leq S \leq x^\delta Q/2$ provided that $4\omega -4\delta <1$ and $2\delta  -8\omega  <   1$.

\medskip Fortunately, Polymath initially proceeds without specifying their value of $S$. As such, we can simply copy their arguments on pages 90 and 91. Noting that $H_b= \frac{x^{3\varepsilon/2}H}{b_1b_2b_3}=  \frac{x^{3\varepsilon/2}Q^3}{b_1b_2b_3 N}$ where $N=N_1N_2N_3$, we get $| T(b_1,b_2,b_3)|^2\leq T_1T_2$ with $T_1 \ll_\varepsilon \frac{x^{3\varepsilon }Q^3 }{b_1b_2b_3 N}$ and
\begin{align*}
T_2=T'_2+T''_2 \ll_\varepsilon \frac{x^{3\varepsilon}  MQ^4 S}{b b_1b_2b_3 N}+
 \frac{x^{3\varepsilon} M^2Q^4S^{1/2} }{b b_1b_2b_3 N}+ \frac{x^{3\varepsilon}   x^{\delta/2}M^2Q^3}{b^{5/2}S^{1/2}}. 
\end{align*}
Multiplying $T_1$ and $T_2$ together, substituting $S= x^{1/3+4\delta/3 -4\omega/3}$ and $Q \ll x^{1/2+2\omega}$ and recalling that $x^{3/2-3\gamma/2 } \ll N \ll x^{3 \gamma }$, we now have 
\begin{align*}
| T(b_1,b_2,b_3)|  &\ll_\varepsilon    \frac{x^{3\varepsilon}  M^{1/2}Q^{7/2} S^{1/2}}{b^{1/2}  b_1b_2b_3  N }+
 \frac{x^{3\varepsilon} M Q^{7/2}S^{1/4} }{b^{1/2}  b_1b_2b_3  N }+ \frac{x^{3\varepsilon}    x^{\delta/4}M Q^3}{b^{5/4} (b_1b_2b_3)^{1/2}S^{1/4}N^{1/2}}  \\
 &\ll_\varepsilon  \left( \frac{x^{3\varepsilon}Q^2 }{N_1N_2N_3 }  \right)
  \left( \frac{   x^{1/2 +3\omega  +3\gamma/4 } S^{1/2}}{b^{1/2}  b_1b_2b_3     }+
 \frac{ x^{1/4 +3\omega  +3\gamma/2}  S^{1/4} }{b^{1/2}  b_1b_2b_3     }+ \frac{    x^{\delta/4 + 3/4+2\omega  +3\gamma/4 }    }{b^{5/4} (b_1b_2b_3)^{1/2}S^{1/4} }  \right)\\
  &\ll_\varepsilon  \left( \frac{x^{3\varepsilon}Q^2 }{N_1N_2N_3 }  \right)
  \left( \frac{   x^{2/3 +7\omega/3  +3\gamma/4  +2\delta/3 }  }{b   (b_1b_2b_3)^{1/2}     }+
 \frac{ x^{1/3 +8\omega/3  +3\gamma/2   +\delta/3 }   }{b   (b_1b_2b_3)^{1/2}   }+ \frac{    x^{7\omega/3  +3\gamma/4 +2/3 -\delta/12    }    }{b  (b_1b_2b_3)^{1/2}  }  \right).
\end{align*}
Substituting this upper bound on $| T(b_1,b_2,b_3)|$ back into (\ref{inequ:thm28aim}), we get 
\begin{align*}
\Sigma'_2 \ll_\varepsilon x^{3\varepsilon} \left(   x^{2/3 +7\omega/3  +3\gamma/4  +2\delta/3 }        +
   x^{1/3 +8\omega/3  +3\gamma/2   +\delta/3 }    +   x^{7\omega/3  +3\gamma/4 - 5/6 -\delta/12    }        \right) \sum_{(b_1,b_2,b_3)} \frac{b_1^\flat b_2^\flat b_3^\flat}{b^3 (b_1b_2b_3)^{1/2} }.
\end{align*}
But by Lemma~7.5 of~\cite{Polymath:2014:EDZ}, the inner sum over $(b_1,b_2,b_3)$ is convergent. Therefore we have $\Sigma'_2 \ll_\varepsilon x^{1-3\varepsilon}$ provided the following three inequalities hold:
\begin{align*}
&2/3 +7\omega/3  +3\gamma/4  +2\delta/3  <1, \\
&1/3 +8\omega/3  +3\gamma/2   +\delta/3 <1, \\
&7\omega/3  +3\gamma/4 +2/3 -\delta/12  <1.
\end{align*}
The second and third inequality are already implied by the first, and so this leaves us with the condition $28\omega   +9\gamma   +8\delta   <4$.
\end{proof}

\section{Finding permissible parameters}\label{sec:domain}

In the previous sections we first derived a version of the GPY sieve with coefficients supported on $T_k(\delta,\underline{A},\underline{B},\varepsilon)$, conditional on there existing a minorant $\rho(n;x)$ for the prime indicator function which has suitable equidistribution properties for moduli produced by $T_k(\delta,\underline{A},\underline{B},\varepsilon)$. We then derived a number of equidistribution estimates which work well for moduli with large $x^\delta$-smooth factors. Now we need to put these ingredients together to choose good values for $\delta$, $\underline{A}$ and $ \underline{B}$ and construct a suitable $\rho(n;x)$. 

\subsection{Harman's sieve} We begin with a Harman's sieve construction.

\begin{defn}[Decomposition] \label{def:decharman}
Let $\epsilon = 10^{-10}$ and let $\xi_1, \xi_2, \xi_3 \in (0,1)$. Then $\mathcal{H}(\xi_1,\xi_2,\xi_3)$ denotes the set of functions $f: \mathbb{N} \times (1,\infty) \rightarrow \mathbb{C}$ which satisfy   one of the following three conditions: 
\begin{enumerate} 
\item[{\rm (I)}]  $f(n;x) = (\alpha \star \beta)(n;x)$, where $\alpha$ is  a coefficient sequence at scale $M$ and $\beta$ is a  smooth  coefficient sequence at scale $N$ with $M(x) N(x) \asymp x$. Additionally, $N(x) = x^{\gamma(x)}$ with $
\gamma(x) \geq  \xi_1-\epsilon.
$
\item[{\rm (II)}]  $f(n;x) = (\alpha \star \beta)(n;x)$, where $\alpha$ is a coefficient sequence at scale $M$, $\beta$ is a  coefficient sequence at scale $N$ with $M(x) N(x) \asymp x$, and $\alpha$ and $\beta$ both have the Siegel--Walfisz property.   Additionally, $N = x^{\gamma(x)}$ with 
$\xi_2 -\epsilon \leq \gamma(x) \leq 1- \xi_2+\epsilon$. 
\item[{\rm (III)}] $f(n;x) = (\alpha \star \psi_1 \star \psi_2 \star \psi_3)(n;x)$, where $\alpha$ is a coefficient sequence at scale $M$ and $\psi_1$, $\psi_2$ and $\psi_3$ are   smooth  coefficient sequences at scales $N_1$, $N_2$ and $N_3$ with $M(x) N_1(x) N_2(x) N_3(x) \asymp x$.   Additionally, $x^{1-2\xi_3-\epsilon} \leq N_1(x), N_2(x), N_3(x) \leq x^{\xi_3+\epsilon}$ and $N_i(x)N_j(x) \geq x^{1-\xi_3-\epsilon}$ for $i \neq j$.
\end{enumerate} 
\end{defn}
We will now use    $\mathcal{H}(\xi_1,\xi_2,\xi_3)$   to construct a minorant for the primes.

\begin{prop} \label{prop:harman}
Let  $T_k(\delta,\underline{A},\underline{B},\varepsilon)$ be as given in Definition~\ref{def:intregion}, let 
$Q^*(x;\delta,\underline{A},\underline{B},\varepsilon,\varepsilon_0)$ be as given in Definition~\ref{def:neededmoduli} and let $\mathcal{H}(\xi_1,\xi_2,\xi_3)$ be as given in Definition~\ref{def:decharman}.

Suppose $\xi_1, \xi_2, \xi_3 \in (0,1)$ satisfy all of the following inequalities:
\begin{align*}
& 2\xi_1 + 3\xi_2   <2, \\
& \xi_2 \leq \xi_3,\\
&\xi_1+ 9\xi_2 < 4, \\
&2\xi_1+ \xi_2>1,\\
&17 \xi_2 < 7.
\end{align*}

Suppose  further that for every $f \in \mathcal{H}(\xi_1,\xi_2,\xi_3)$,  $x>1$, $\varepsilon_0 >0$, $A>0$ and    $a\in \mathbb{Z}$  with $(a,P(x))=1$,   
\begin{align}\label{inequ:harmrequ}
\sum_{\substack{q \in Q^*(x;\delta,\underline{A},\underline{B},\varepsilon,\varepsilon_0) \\  q \mbox{ \scriptsize  is squarefree  }   }}  \Bigg| \sum_{\substack{n \in [x,2x] \\ n \equiv a (q)}} f(n;x) - \dfrac{1}{\phi(q)} \sum_{\substack{ n \in [x,2x] \\ (n,q)=1}}f(n;x) \Bigg| \ll_{A,\varepsilon_0} \dfrac{x}{\log(x)^A}.
\end{align}

We then  write $p_j = x^{\alpha_j}$ and define the minorant 
\begin{align*}
\rho(n;x) = 1_{\mathbb{P}}(n) - \sum_{\substack{ n = p_1p_2p_3p_4p_5 \\ 1 -2\xi_2 < \alpha_4 < \alpha_3 < \alpha_2 < \alpha_1 < \xi_2 \\ \alpha_1+\alpha_2 < \xi_2 \\ \alpha_2+\alpha_3+\alpha_4 >1-\xi_2 \\ \alpha_5>\alpha_4}} 1- \sum_{\substack{ n = p_2p_3p_4p_5p_6 \\ 1-2\xi_2 < \alpha_2, \alpha_3, \alpha_4, \alpha_5, \alpha_6 < 8\xi_2-3    \\ \alpha_2, \alpha_4 > \alpha_3  \\ \alpha_2+\alpha_4 < \xi_2\\ \alpha_3 + \alpha_4 + \alpha_5 >1-\xi_2 \\   \alpha_6 > \alpha_5 }} 1.
\end{align*}
Furthermore, we choose the constants
\begin{align*}
 c_1&=  \int^{\xi_2}_{1 -2\xi_2}\int^{\min\{\alpha_1, \xi_2-\alpha_1\}}_{1 -2\xi_2}\int^{\alpha_2}_{1 -2\xi_2}\int^{\min\{\alpha_3,(1-\alpha_1-\alpha_2-\alpha_3)/2\}}_{\max\{1 -2\xi_2, 1 - \xi_2-\alpha_2-\alpha_3\}} \frac{1}{\alpha_1\alpha_2\alpha_3\alpha_4(1-\sum_{i=1}^4\alpha_i)} \, \mbox{d}\alpha_1 \mbox{d}\alpha_2\mbox{d}\alpha_3\mbox{d}\alpha_4 \\
&+
 \int^{8\xi_2-3 }_{1 -2\xi_2}\int^{\min\{8\xi_2-3 , \alpha_2\}}_{1 -2\xi_2}\!\!\!\!\int_{\max\{\alpha_3, 1 -2\xi_2\}}^{\min\{8\xi_2-3 , \xi_2-\alpha_2\}}\!\!\!\!\int^{\min\{8\xi_2-3 ,(1-\sum_{i=2}^4\alpha_i)/2\}}_{\max\{1 -2\xi_2,1 - \xi_2-\alpha_3-\alpha_4\}} \frac{1}{\prod_{i=2}^5\alpha_i(1-\sum_{i=2}^5\alpha_i)} \, \mbox{d}\alpha_2 \mbox{d}\alpha_3\mbox{d}\alpha_4\mbox{d}\alpha_5,\\
 c_2 &=  \begin{cases} 
 24 \quad \mbox{ if } \xi_2 > \frac{4}{10}, \\ 
 0  \,\,\, \quad \mbox{ if } \xi_2 \leq  \frac{4}{10}.
 \end{cases}
\end{align*} 
 
Then $\rho(n;x)$ has the following properties:
\begin{enumerate}[{\rm(1)}]
\item  Prime minorant: $-c_2 \leq \rho(n;x) \leq 1_{\mathbb{P}}(n)$ for all large $x$ and $n \in [x,2x]$.  
\item  $\rho$ has  equidistribution properties for  moduli corresponding to $T_k(\delta,\underline{A},\underline{B},\varepsilon)$. $($See Definition~\ref{def:equidist}.$)$
\item If $\rho(n;x) \neq 0$, then all prime factors of $n$ exceed $x^{1-2\xi_2}$.  
\item The sum of $\rho(n;x)$ over $[x,2x]$ satisfies
$\sum_{n \in [x,2x]} \rho(n;x) = (1-c_1 +o(1)) \frac{x}{\log(x)}$.
\end{enumerate}
\end{prop}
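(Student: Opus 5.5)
The plan is a standard Harman/Buchstab decomposition of $1_{\mathbb{P}}$ on $[x,2x]$, following Baker--Irving~\cite{Baker:2017:BIP} and the construction in~\cite{Stadlmann:2023:PAP}. I would write $1_{\mathbb{P}}(n)=S(n,x^{1/2})$, where $S(n,z)$ is the indicator that $n$ has no prime factor below $z$. Starting from $S(n,x^{1-2\xi_2})$, I would apply Buchstab's identity repeatedly:
\begin{align*}
S(n,z)=S(n,w)-\sum_{\substack{n=pm\\ w\leq p<z}} S(m,p).
\end{align*}
This splits $1_{\mathbb{P}}$ into a signed combination of sums over $n=p_1\cdots p_r m$ with sieve conditions on $m$. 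Each term is then classified by the sizes $\alpha_i$.

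A term is \emph{kept} (i.e.\ its equidistribution is proved) if some subproduct has exponent in $[\xi_2,1-\xi_2]$. Such a term becomes a Type II convolution in the sense of $\mathcal{H}(\xi_1,\xi_2,\xi_3)$(II), and one checks that both factors are Siegel--Walfisz, which holds for rough-number and prime-supported sequences.

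The sieve terms $S(m,x^{1-2\xi_2})$ with $m$ of large size are handled by expanding them via Heath-Brown's identity, or equivalently by a further fundamental-lemma decomposition with Möbius-type coefficients. The resulting pieces are products of coefficient sequences, and each lands in (I), (II) or (III).

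The hypotheses are what make this classification succeed:
\begin{itemize}
\item $2\xi_1+\xi_2>1$ ensures that when there are no large smooth factors, some combination lies in the Type II range.
\item $\xi_1+9\xi_2<4$ and $\xi_2\leq \xi_3$ handle the Type III configurations: three smooth variables each of size about $x^{1/3}$ with no Type II subproduct.
\item $2\xi_1+3\xi_2<2$ is used in the Type I/II split.
\end{itemize}

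The terms that cannot be classified are five-prime products. They are those with $1-2\xi_2<\alpha_4<\alpha_3<\alpha_2<\alpha_1<\xi_2$ and $\alpha_1+\alpha_2<\xi_2$, where no subsum lies in $[\xi_2,1-\xi_2]$ (this is the role of $\alpha_2+\alpha_3+\alpha_4>1-\xi_2$). A second family appears from the next Buchstab step on a different branch, restricted to $\alpha_i<8\xi_2-3$, and here $17\xi_2<7$ ensures consistency of the ranges. These terms carry negative sign, so I would discard them. This gives exactly the displayed $\rho$, and dropping them yields $\rho\leq 1_{\mathbb{P}}$.

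The four properties then follow in turn:
\begin{itemize}
\item Property (2): $\rho$ equals the kept linear combination, a bounded number (up to $\log$ powers, via dyadic splitting of the $\alpha_i$) of functions in $\mathcal{H}$. Each satisfies (\ref{inequ:harmrequ}) by hypothesis, so summing gives the equidistribution for moduli in $Q^*$.
\item Property (3): every term in the decomposition, and every discarded term, only involves $n$ all of whose prime factors exceed $x^{1-2\xi_2}$. This holds since $\rho$ is built from $1_{\mathbb{P}}$ minus products of primes $>x^{1-2\xi_2}$.
\item Property (4): by the prime number theorem, the discarded sums have size $c_1x/\log x+o(x/\log x)$. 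Writing $\sum_{p_1\cdots p_5\sim x}1\sim \frac{x}{\log x}\int\frac{d\alpha}{\alpha_1\cdots\alpha_4(1-\sum\alpha_i)}$ gives the displayed integrals. The inner limits $\alpha_4<\alpha_5=1-\sum$ appear as $(1-\alpha_1-\cdots-\alpha_3)/2$.
\item Property (1): lower bound. If $\xi_2\leq 4/10$ the discarded regions are empty, since $1-2\xi_2\geq\xi_2-(1-2\xi_2)$ forces contradictions, so $c_2=0$. Otherwise $n\in[x,2x]$ with all prime factors above $x^{1-2\xi_2}>x^{1/5}$ has at most $4$ prime factors, or $5$ at the borderline. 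The number of ordered factorizations $p_1\cdots p_5$ counted by the two discarded sums is then at most the number of orderings, giving a crude bound like $24$.
\end{itemize}

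The main obstacle is the combinatorial verification that the Buchstab tree can be arranged so that every non-discarded term falls in (I), (II) or (III) under exactly the five stated inequalities. I would do this by case analysis on the number of prime variables and their sizes, mirroring the analogous argument in~\cite{Stadlmann:2023:PAP}. The bookkeeping for the second discarded family, with its bound $8\xi_2-3$, is where I expect most care is needed.
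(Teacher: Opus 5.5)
Your overall route, a Buchstab/Harman decomposition along the lines of Baker--Irving and \cite{Stadlmann:2023:PAP}, is the paper's. However, the step that produces the second subtracted family is missing, and your sign logic is backwards.

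In Harman's sieve a nonnegative term may be dropped only if it occurs with a \emph{plus} sign in the decomposition of $1_{\mathbb{P}}$. Here one needs $1_{\mathbb{P}}=G+\Gamma_2+\Gamma_2^*$, with $G$ equidistributed and $\Gamma_2,\Gamma_2^*$ the two subtracted sums, so that $\rho=G$. Discarding a negatively signed nonnegative term produces a majorant, so ``these terms carry negative sign, so I would discard them'' cannot be how $\rho$ arises.

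In the paper, Buchstab's identity from $x^{\zeta}$ with $\zeta=1-\xi_1-\xi_2$ gives (\ref{equ:decstart}); this is where $2\xi_1+3\xi_2<2$, i.e.\ $2\zeta>\xi_2$, is used. The last sum there, $-\sum_{n=p_1p_2p_3n_4}\psi(n_4,p_3)$ (with $\psi(n_4,p_3)$ your $S(n_4,p_3)$), is negatively signed and contains configurations with no Type~II subproduct. Iterating Buchstab on that branch does not help: $7\xi_2<3$ forces $n_4$ to be prime, and the bad configurations stay negatively signed.

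The paper instead uses \emph{reversal of roles}. It treats $p_1$ as the sieved variable and applies Buchstab's identity to it, which flips the sign; only then is the bad part discarded. This explains the shape of $\Gamma_2^*$: $p_1$ disappears, two new primes $p_5<p_6$ appear, and $p_2,p_4>p_3$ with no order between $p_2$ and $p_4$. It is also where three of the hypotheses enter:
\begin{itemize}
\item $2\xi_1+\xi_2>1$, to drop $\alpha_2<\alpha_1$;
\item $12\xi_2<5$, for primality of $p_6$;
\item $17\xi_2<7$, i.e.\ $2(8\xi_2-3)<1-\xi_2$.
\end{itemize}
Without this step you cannot reach the displayed $\rho$. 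Property~(2), which you defer to an unspecified case analysis, is therefore not established.

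Property~(1) also has a gap. For $\xi_2>4/10$ one has $x^{1-2\xi_2}<x^{1/5}$, not $>$. The relevant $n$ are products of five primes, and counting all orderings only bounds $\Gamma_2(n)+\Gamma_2^*(n)$ by $2\cdot 5!$. To get $24$ you must use the ordering constraints:
\begin{itemize}
\item In $\Gamma_2$, $p_4$ is the least prime and $p_1>p_2>p_3$. Choosing $p_5$ among the other four primes determines the tuple, giving at most $4$.
\item In $\Gamma_2^*$, there are $\binom{5}{3}=10$ choices of $\{p_2,p_3,p_4\}$, and $p_3$ must be their least element. The pair $(p_2,p_4)$ can be ordered in $2$ ways, and $p_5<p_6$ is then forced, giving at most $20$.
\end{itemize}
Your arguments for properties~(3) and~(4), and for the case $\xi_2\le 4/10$, are fine.
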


(Note: If $\xi_2 \leq  \frac{4}{10}$, then $\rho(n;x)$ is simply $1_{\mathbb{P}}(n)$.)

\begin{proof}
The construction of $\rho(n;x)$ is essentially the same as in Baker and Irving's work~\cite{Baker:2017:BIP} and in the author's paper~\cite{Stadlmann:2023:PAP}.  We have simply replaced specific numeric values by variables. However, along the way numerous inequalities need to be satisfied, so to justify this choice of $\rho(n;x)$, we look a little more closely at the proofs in Section~4 of~\cite{Stadlmann:2023:PAP}: The construction begins on page~35 with the statement of various equidistribution estimates. We replace the values $0.33856$, $0.40481$, $0.59519$, $0.19$, $0.405$ and $0.595$, which are used in~\cite{Stadlmann:2023:PAP}, by $\xi_1$, $\xi_2$, $1-\xi_2$, $1-2\xi_3$, $\xi_3$ and $1-\xi_3$, respectively.  In Lemma~19 on page~35 we then instead have $\alpha = \xi_2$, $\beta = 1-2\xi_2 =\lambda$ and $M= x^{1-\xi_1}$ and $\zeta = 1-\xi_1-\xi_2$. In particular, $0.19038$ is replaced by $1-2\xi_2$ and $0.25663$ is replaced by $1-\xi_1-\xi_2$.  Using Buchstab decompositions as on page 36 of~\cite{Stadlmann:2023:PAP}, and making the   assumption
$$2\zeta = 2( 1-\xi_1-\xi_2 )> \xi_2,$$
we then arrive at  
\begin{align} \label{equ:decstart}
1_{\mathbb{P}}(n) = \theta_0(n;x) + \!\!\!\!\!\!\!\!\!\!\sum_{\substack{ n = p_1p_2n_3 \\ 1-2\xi_2 \leq \alpha_2 < \alpha_1 < \xi_2 \\ \alpha_1+\alpha_2 > 1-\xi_2 \\ \alpha_2 > 1-\xi_1-\xi_2 }} \!\!\!\!\!\!\!\!\!\!\psi(n_3, p_2) + \!\!\!\!\!\!\!\!\!\!\sum_{\substack{ n = p_1p_2p_3p_4n_5 \\ 1-2\xi_2\leq \alpha_4 < \alpha_3 < \alpha_2 < \alpha_1 < \xi_2 \\ \alpha_1+\alpha_2 < \xi_2 \\ \alpha_3 < 1-\xi_1-\xi_2}} \!\!\!\!\!\!\!\!\!\!\!\!\!\! \psi(n_5, p_4)- \!\!\!\!\!\!\!\!\!\!\sum_{\substack{ n = p_1p_2p_3n_4 \\ 1-2\xi_2 \leq \alpha_3 <\alpha_2<\alpha_1 < \xi_2 \\ \alpha_1+\alpha_2 > 1-\xi_2 \\ \alpha_2 < 1-\xi_1-\xi_2 }}\!\!\!\!\!\!\!\!\!\!\!\!\!\!\!\! \psi(n_4, p_3),
\end{align}
where $\theta_0(n;x)$ has  equidistribution properties for  moduli corresponding to $T_k(\delta,\underline{A},\underline{B},\varepsilon)$. 

\medskip For $\xi_1 =0.33856$ and $\xi_2=0.40481$, the first sum in  (\ref{equ:decstart}) is treated in Lemma~20 on page 36 of~\cite{Stadlmann:2023:PAP}. There it is shown that this sum also has the desired equidistribution properties. For this proof to work for our more general parameters $\xi_1$, $\xi_2$ and $\xi_3$, we need the following: At the start of the proof of Lemma~20 we use that $n_3$ must be prime. Since $\alpha_1+\alpha_2 > 1-\xi_2$ and $n_3$ not prime would imply $n_3 > x^{2(1-\xi_1-x_2)}$  this is ensured by assumption $2(1-\xi_1-x_2)  >\xi_2$. Next, in the fourth paragraph on page 37, Polymath's Type III equidistribution estimate is implied. To make sure this is possible, we have the important assumption
$$  \xi_2 \leq \xi_3.$$ 
A paragraph further,  $\mu_2 + \mu_3 < \xi_2$ is treated. There $1-\xi_1-\xi_2/2 > \xi_2$ is used to get $\mu_4, \mu_5>1-2\xi_2$. (For the meaning of $\mu_i$, see~\cite{Stadlmann:2023:PAP}. We skip this discussion here, as it is not relevant for the inequality check.) $\mu_1 + (\mu_6 + \dots + \mu_j)<1-\xi_1-\xi_2$ is then deduced from  $1-4(1-2\xi_2) < 1-\xi_1-\xi_2$, which rearranges to
\begin{align*}
\xi_1+ 9\xi_2 < 4.
\end{align*}
The inequalities $2\xi_1 + 3\xi_2 <2$ and  $\xi_2 < \xi_3$ and $\xi_1+9\xi_2 <4$ are all we need for the proof of Lemma 20 to work.  Moving on, the second sum in (\ref{equ:decstart}) is treated by Lemma 21 on page 37. 

\medskip For the proof of Lemma~21 to work with general $\xi_1$, $\xi_2$ and $\xi_3$, we need the following: First, the second sum is split up depending on whether $\alpha_2+\alpha_3+\alpha_4 \leq 1-\xi_2$ or $\alpha_2+\alpha_3+\alpha_4 > 1-\xi_2$. In the first case, we   need $3 (1-2\xi_2) > \xi_2$ to deduce that the sum has the desired equidistribution properties. Here we get the additional assumption
$ 7 \xi_2 < 3.$ 

\medskip
In the second case, we use $\xi_2/2 < 1-\xi_1-\xi_2$ to drop the assumption $\alpha_3 < 1-\xi_1-\xi_2$, and    $\alpha_2+\alpha_3+\alpha_4 > 1-\xi_2$ means that we need $(4/3)(1-\xi_2)+2(1-2\xi_2) >1$
to deduce that $n_5$ is prime. The condition $(4/3)(1-\xi_2)+2(1-2\xi_2) >1$ rearranges to
$  16\xi_2 < 7
$, which is already implied by $ 7 \xi_2 < 3.$ Hence we are left with the sum 
\begin{align}\label{equ:decpart1}
\Gamma_2(n;x)= \sum_{\substack{ n = p_1p_2p_3p_4p_5 \\ 1-2\xi_2 < \alpha_4 < \alpha_3 < \alpha_2 < \alpha_1 < \xi_2 \\ \alpha_1+\alpha_2 <  \xi_2 \\ \alpha_2+\alpha_3 +\alpha_4 > 1-\xi_2 \\ \alpha_5 > \alpha_4}} \!\!\!\!\!\! 1,
\end{align}
for which we do not have good equidistribution estimates. It thus needs to be subtracted from $1_{\mathbb{P}}(n)$. Using the prime number theorem for short intervals, we find that $\sum_{n \in [x,2x]} \Gamma_2(n;x)$ equals 
\begin{align*}
\dfrac{x(1+o(1))}{\log(x)}\int^{\xi_2}_{1 -2\xi_2}\int^{\min\{\alpha_1, \xi_2-\alpha_1\}}_{1 -2\xi_2}\int^{\alpha_2}_{1 -2\xi_2}\int^{\min\{\alpha_3,(1-\alpha_1-\alpha_2-\alpha_3)/2\}}_{\max\{1 -2\xi_2, 1 - \xi_2-\alpha_2-\alpha_3\}} \frac{1}{\alpha_1\alpha_2\alpha_3\alpha_4(1-\sum_{i=1}^4\alpha_i)} \, \mbox{d}\alpha_1 \mbox{d}\alpha_2\mbox{d}\alpha_3\mbox{d}\alpha_4. 
\end{align*}  

Finally, we need to look at the third sum in (\ref{equ:decstart}). It  is treated by Lemma 22 on page 38 of~\cite{Stadlmann:2023:PAP}.  The proof first needs $(1-\xi_2) + 3(1-2\xi_2) >1$  to deduce that $n_4$ is prime. This is implied by $7\xi_2 <3$. Next, $(1-\xi_2)/2> 1-\xi_1-\xi_2$ is needed to drop condition $\alpha_2 <\alpha_1$. This condition rearranges to 
$$   2\xi_1+ \xi_2  > 1. $$
Further, we have $1-3(1-2\xi_2) < 1-\xi_2$ (since $  7\xi_2  <  3$), and this inequality  is used to drop condition $\alpha_1 < 1-\xi_2$.  At that point the proof of Lemma~22  splits the sum into two parts, $\Upsilon_1$ and $\Upsilon_2$. $\Upsilon_2$ has good equidistribution properties, while reversal of roles is applied to $\Upsilon_1$. We end up with 
 \begin{align*}
\Upsilon_3(n;x) = \sum_{\substack{ n = p_2p_3p_4p_5p_6 \\ 1-2\xi_2 \leq \alpha_3 <\alpha_2 < \xi_2 \\ \alpha_3+\alpha_4 < \xi_2 \\ \alpha_2 < 1-\xi_1-\xi_2 \\ \alpha_4 \geq \alpha_3 \\ 1-2\xi_2 \leq \alpha_5 \leq \alpha_6}} 1, 
\end{align*} 
where $6(1-2\xi_2)>1$  was needed to get the prime $p_6$. This rearranges to condition
$
12\xi_2<5.
$
Note that $
12\xi_2<5 
$ implies $
7\xi_2<3.
$
Next we observe that $\alpha_i > 1-2\xi_2$ for all $i$ implies that also $\alpha_i < 1-4(1-2\xi_2)= 8\xi_2-3   $ for all $i$. Furthermore, $8\xi_2-3 < \xi_2$ since $7\xi_2 <3$. So condition $\alpha_i < \xi_2$ can be replaced by $\alpha_i < 8\xi_2-3  $.  Finally, the proof of Lemma~22  also requires $2( 8\xi_2-3) < 1- \xi_2$ . We need
  $$17\xi_2  < 7 $$
  to get that $\Upsilon_3$ is the sum of a function with the desired equidistribution properties and of $\Gamma^*_2(n;x)$, where
  \begin{align}\label{equ:decpart2}
\Gamma^*_2(n;x)=\sum_{\substack{ n = p_2p_3p_4p_5p_6 \\1-2\xi_2 \leq \alpha_2, \alpha_3, \alpha_4, \alpha_5, \alpha_6 \leq 8\xi_2-3   \\ \alpha_2, \alpha_4 > \alpha_3  \\ \alpha_2+\alpha_4 <\xi_2\\ \alpha_3 + \alpha_4 + \alpha_5 >1- \xi_2 \\   \alpha_6 > \alpha_5 }} 1.
\end{align}
   
Using standard techniques, we find that $\sum_{ n \in[x,2x]}\Gamma^*_2(n;x)$ is asymptotically equal to 
\begin{align*} 
\dfrac{x (1+o(1))}{\log(x)}\!\! \int^{8\xi_2-3 }_{1 -2\xi_2}\int^{\min\{8\xi_2-3 , \alpha_2\}}_{1 -2\xi_2}\!\!\!\!\int_{\max\{\alpha_3, 1 -2\xi_2\}}^{\min\{8\xi_2-3 , \xi_2-\alpha_2\}}\!\!\!\!\int^{\min\{8\xi_2-3 ,(1-\sum_{i=2}^4\alpha_i)/2\}}_{\max\{1 -2\xi_2,1 - \xi_2-\alpha_3-\alpha_4\}} \frac{1}{\prod_{i=2}^5\alpha_i(1-\sum_{i=2}^5\alpha_i)} \, \mbox{d}\alpha_2 \mbox{d}\alpha_3\mbox{d}\alpha_4\mbox{d}\alpha_5.
\end{align*} 
Overall, this tells us that $\rho(n;x)$ has  equidistribution properties for  moduli corresponding to $T_k(\delta,\underline{A},\underline{B},\varepsilon)$, and that $\sum_{n \in [x,2x]} \rho(n;x) = (1-c_1 +o(1)) \frac{x}{\log(x)}$ for the given value of $c_1$. Furthermore, clearly $\rho(n;x) \leq 1_{\mathbb{P}}(n)$ and  $\rho(n;x) \neq 0$   implies that all prime factors of $n$ exceed $x^{1-2\xi_2}$.  Now we   need to think about $c_2$. 

\medskip If $\xi_2 \leq \frac{4}{10}$, we note that $1-2\xi_2 \geq \frac{1}{5}$ and hence  $\alpha_i > \frac{1}{5}$ for all $i$ in both $\Gamma_2(n;x)$ and $\Gamma^*_2(n;x)$. This means that $\Gamma_2(n;x)$ and $\Gamma^*_2(n;x)$ are simply empty sums and $\rho(n;x)=1_{\mathbb{P}}(n)$. We can take $c_2=0$. If $\xi_2 >\frac{4}{10}$ we need to be more careful. Looking at  $\Gamma_2(n;x)$, defined in (\ref{equ:decpart1}), we only get a negative contribution if $n=p'_1p'_2p'_3p'_4p'_5$ for some primes $p'_i$ with $p'_1 > \dots > p'_5$. Further, $p_4$ needs to be the smallest prime, so there are only $4$ choices for $p_5$. Since $p_1 > p_2 > p_3 > p_4$, the choice of $p_5$ immediately   also determines the values of $p_1$, $p_2$ and $p_3$. No $n$ can be counted more than $4$ times. Thus  $\Gamma_2(n;x) \geq -4$. Similarly, looking at $\Gamma^*_2(n;x)$, defined in (\ref{equ:decpart2}), we only get a negative contribution if  $n=p'_1p'_2p'_3p'_4p'_5$ for some primes $p'_i$ with $p'_1 > \dots > p'_5$. There are at most $\binom{5}{3}=10$ choices for $\{p_2,p_3, p_4\}$, and since we need $p_2>p_3$ and $p_4 > p_3$ and $p_6>p_5$, any $n$ can be counted at most $20$ times. We get $\Gamma_2(n;x) \geq -20$ and overall have $\rho(n;x) \geq -24$. This gives us choice $c_2=24$.
\end{proof}

\subsection{Partition lemmas}

In Proposition~\ref{prop:harman} we chose parameters $(\xi_1, \xi_2,\xi_3)$ and constructed a prime minorant $\rho(n;x)$ which has  equidistribution properties for  moduli corresponding to $T_k(\delta,\underline{A},\underline{B},\varepsilon)$ provided that every $f \in \mathcal{H}(\xi_1, \xi_2,\xi_3)$  has equidistribution properties for moduli in $Q^*(x;\delta,\underline{A},\underline{B},\varepsilon,\varepsilon_0)$. We now need to translate the equidistribution estimates proved in Section~\ref{sec:equiest} into conditions on  $(\xi_1, \xi_2,\xi_3)$  and $(\delta, \underline{A}, \underline{B})$ which ensure that $f \in \mathcal{H}(\xi_1, \xi_2,\xi_3)$ equidistributes for moduli in $Q^*(x;\delta,\underline{A},\underline{B},\varepsilon,\varepsilon_0)$. We will do this by thinking  about the following sets:

\begin{defn}\label{def:checkset} For given $B_1, B_2, \delta \in (0,\infty)$ and given $m_1, m_2 \in \mathbb{N}\cup\{0\}$, we set 
$$\Xi(B_1,B_2,m_1,m_2,\delta)=\left\{(y_1, \dots, y_{m_1+m_2}) \in [\delta,1]^{m_1+m_2}: \sum_{i=1}^{m_1} y_i \leq  B_{1}  \mbox{ and }   \sum_{i=m_1+1}^{m_1+m_2} y_i \leq B_{2} \right\}.$$ 
\end{defn}
Properties of $\Xi(B_{j,m},B_{j',m'},m,m',\delta)$ will be used to factorize moduli in $Q(x;\delta,\underline{A},\underline{B},\varepsilon,j,j',m,m',\varepsilon_0)$.

\subsubsection{2 factors}

\medskip We begin by asking what conditions on $T_k(\delta,\underline{A},\underline{B},\varepsilon)$ ensure that   $q \in Q^*(x;\delta,\underline{A},\underline{B},\varepsilon,\varepsilon_0)$ has a factor in some fixed interval $[x^a, x^b]$. 
We make the following simple observation:
\begin{lemma}\label{lem:partition1} Let $Q(x;\delta,\underline{A},\underline{B},\varepsilon,j,j',m,m',\varepsilon_0)$ be as given in Definition~\ref{def:neededmoduli} and let $\Xi(B_1,B_2,m_1,m_2,\delta)$ be as given in Definition~\ref{def:checkset}. 
 Consider $a, b \in (0,\frac{1}{2})$   with $b-a \geq \delta$.

 \smallskip 
  Suppose that for every  $(y_1, \dots, y_{m+m'}) \in \Xi(B_{j,m},B_{j',m'},m,m',\delta)$  there exists a partition $I_1 \cup I_2$ of $\{1, \dots, m+m'\}$ with 
$$ \sum_{i \in I_1} y_i \leq b \quad \mbox{ and } \quad  \sum_{i \in I_2} y_i \leq  \frac{1}{2}-a. $$
Then there exists a constant $\varepsilon_1>0$ (dependent only on $\varepsilon_0$ and $\delta$), so that the following is true for large $x$:  If $q \in Q(x;\delta,\underline{A},\underline{B},\varepsilon,j,j',m,m',\varepsilon_0)$ and $q > x^{1/2-\varepsilon_1}$, then $q$ has some $r \mid q$ with $r \in  [x^{a}, x^b]$. 
\end{lemma}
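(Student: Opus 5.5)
Write $q = e e' \prod_{i=1}^m f_i \prod_{i=1}^{m'} f'_i$ as in Definition~\ref{def:neededmoduli}, and set $y_i = \log_x(f_i)$ for $i \le m$ and $y_{m+i} = \log_x(f'_i)$ for $i \le m'$. Since each $y_i \ge \delta$, $\sum_{i\le m} y_i \le (1-\varepsilon_0)B_{j,m} \le B_{j,m}$ and $\sum_{i>m} y_i \le B_{j',m'}$, the vector $(y_1,\dots,y_{m+m'})$ lies in $\Xi(B_{j,m},B_{j',m'},m,m',\delta)$. By hypothesis there is a partition $I_1\cup I_2$ with $\sum_{I_1} y_i \le b$ and $\sum_{I_2} y_i \le \tfrac12 - a$. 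Let $r_0 = \prod_{i\in I_1}(\text{corresponding } f\text{'s})$, so $r_0 \le x^b$, and the complementary product $s_0$ of large factors satisfies $s_0 \le x^{1/2-a}$.

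The plan is to grow $r_0$ greedily using the $x^\delta$-smooth part $ee'$. Since $q > x^{1/2-\varepsilon_1}$ and $r_0 s_0 ee' = q$, we get $r_0 \cdot ee' = q/s_0 \ge x^{1/2-\varepsilon_1-(1/2-a)} = x^{a-\varepsilon_1}$. If $r_0 \ge x^a$ we are done. Otherwise, multiply $r_0$ by the prime factors of $ee'$ one at a time; each prime is $< x^\delta$, so the partial products increase by factors $< x^\delta \le x^{b-a}$. Hence, as long as the full product $r_0 ee'$ reaches $x^a$, some partial product lands in $[x^a, x^b]$: the first partial product exceeding $x^a$ is below $x^a\cdot x^{\delta} \le x^b$. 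Here we use $b - a \ge \delta$, with equality requiring care at the endpoints, since the smoothness gives a strict bound $<x^\delta$.

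The main obstacle is the $\varepsilon_1$ loss. The argument above only guarantees $r_0 ee' \ge x^{a-\varepsilon_1}$, not $x^a$. To fix this I would exploit the slack coming from the factor $(1-\varepsilon_0)$ in the definition of $Q$. The fix is to apply the hypothesis not to $(y_i)$ itself but to a rescaled vector. Take $y_i^* = y_i/(1-\varepsilon_0)$, capped appropriately so that $y_i^*\ge\delta$ still holds. This vector still lies in $\Xi$, because $\sum_{i\le m} y^*_i \le B_{j,m}$ and similarly for the second block. Its partition gives $\sum_{I_2} y_i \le (1-\varepsilon_0)(\tfrac12-a) \le \tfrac12 - a - \varepsilon_0(\tfrac12-a)$ and $\sum_{I_1} y_i \le (1-\varepsilon_0) b< b$. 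Choosing $\varepsilon_1 < \varepsilon_0(\tfrac12 - a)$ then yields $r_0 ee' \ge x^{a+\varepsilon_0(1/2-a)-\varepsilon_1} > x^a$, and the greedy step completes the argument. One must check that $\varepsilon_1$ can be taken depending only on $\varepsilon_0$ and $\delta$. This works because $a<\tfrac12-\delta$ can be assumed: otherwise $b$ exceeds $\tfrac12$, contradicting $b<\tfrac12$. So we may take $\varepsilon_1=\varepsilon_0\delta/2$. The case $r_0\ge x^a$ requires no smooth factors and is immediate.
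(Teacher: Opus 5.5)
Your proof is correct and, after the rescaling fix in your last paragraph, it is essentially the paper's argument. You apply the hypothesis to $y_i^*=\log_x(f_i)/(1-\varepsilon_0)\in\Xi$, which gains the slack $\varepsilon_0(\tfrac12-a)$ in the $I_2$-bound and absorbs the loss from $q>x^{1/2-\varepsilon_1}$. You then adjoin the primes of $ee'$ (each $<x^\delta\le x^{b-a}$) one at a time until the product lands in $[x^a,x^b]$, exactly as the paper does.

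No capping of $y_i^*$ is needed, since $y_i^*\ge y_i\ge\delta$ and $y_i^*\le A_{j'}+\varepsilon<\tfrac12$ hold automatically. Your explicit choice $\varepsilon_1=\varepsilon_0\delta/2$, justified by $\tfrac12-a>b-a\ge\delta$, is a detail the paper leaves implicit.
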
 

\begin{proof} Recall that we defined $Q(x;\delta,\underline{A},\underline{B},\varepsilon,j,j',m,m',\varepsilon_0)$ to be the following set:
\begin{align*}\left\{ e e' \prod_{i=1}^m f_i \prod_{i=1}^{m'} f'_i    \in [1,x]: 
\begin{array}{ll}
  \log_x(f_1 \dots f_m) \leq (1-\varepsilon_0)B_{j,m}, & \log_x(f'_1 \dots f'_{m'}) \leq  (1-\varepsilon_0)B_{j',m'}, \\
  \log_x(f_1 \dots f_m e) \leq (1-\varepsilon_0)(A_j-\varepsilon), &  \log_x(f'_1 \dots f'_{m'} e') \leq (1-\varepsilon_0)(A_{j'}+\varepsilon), \\
ee' \mbox{ is } x^\delta\mbox{-smooth}, & \log_x(f_i), \log_x(f'_i) \geq  \delta \mbox{ for all } i
\end{array}\right\}
\end{align*} 
Fix some $q=e e' \prod_{i=1}^m f_i \prod_{i=1}^{m'} f'_i    \in  Q(x;\delta,\underline{A},\underline{B},\varepsilon,j,j',m,m',\varepsilon_0)$ with $q > x^{1/2-\epsilon}$, where $\epsilon$ is very small.  Let $p_1 \dots p_s$ be the prime factorization of $e e'$. We then know that $p_i <x^\delta$ for all $i$. 
We   set
\begin{align*}
&y_i = \frac{1}{(1-\varepsilon_0)}  \log_x(f_i)  \qquad\,\,\, \mbox{ for } 1 \leq i \leq m, \\
 &y_{m+i} = \frac{1}{(1-\varepsilon_0)}   \log_x(f'_i)  \quad \mbox{ for } 1 \leq i \leq m', \\
 &z_i = \log_x(p_i)   \qquad \qquad \qquad \,\, \mbox{ for } 1 \leq i \leq s.
\end{align*} 
Since $y_i \geq \delta$ for all $i$ and $\sum_{i=1}^m y_i \leq B_{j,m}$ and $\sum_{i=m+1}^{m+m'} y_i \leq B_{j',m'}$, there exists a partition 
$I_1 \cup I_2$ of $\{1, \dots, m+m'\}$ with 
$$ \sum_{i \in I_1} (1-\varepsilon_0) y_i \leq  (1-\varepsilon_0) b < b \quad \mbox{ and } \quad  \sum_{i \in I_2} (1-\varepsilon_0) y_i \leq  (1-\varepsilon_0) \left(\tfrac{1}{2} -a\right).$$
Since $q > x^{1/2-\epsilon}$, we also know that $$ \sum_{i \in I_1} (1-\varepsilon_0) y_i +  \sum_{i=1}^s z_i \geq \tfrac{1}{2}-\epsilon-  (1-\varepsilon_0)\left(\tfrac{1}{2} -a \right)   \geq a,$$
provided $\epsilon$ is sufficiently small compared to $\varepsilon_0$.  But $z_i < \delta$   and so there exists $J \subseteq \{1, \dots, s\}$ with 
\begin{align*}
 \sum_{i \in I_1} (1-\varepsilon_0) y_i +  \sum_{i \in J}^s z_i  \in [a,b].
\end{align*}
Multiplying the corresponding $f_i$, $f'_i$ and $p_i$ together, we then get a factor $r \mid q$ with $r \in [x^a,x^b]$.
\end{proof}

\subsubsection{3 factors}

Similarly, we next ask what condition ensures that $q \in Q^*(x;\delta,\underline{A},\underline{B},\varepsilon,\varepsilon_0)$ has a factorization $q=uvr$ with $r \in [x^{a_1},x^{b_1}]$ and $u \in [x^{a_2},x^{b_2}]$.

\begin{lemma}\label{lem:partition2} Let $Q(x;\delta,\underline{A},\underline{B},\varepsilon,j,j',m,m',\varepsilon_0)$ be as given in Definition~\ref{def:neededmoduli} and let $\Xi(B_1,B_2,m_1,m_2,\delta)$ be as given in Definition~\ref{def:checkset}. 
 Consider $a_1, a_2, b_1, b_2 \in (0,\frac{1}{2})$   with $b_1 -b_2 \geq a_1-a_2$ and  $b_1 + b_2 < \frac{1}{2}$ and $b_1-a_1 \geq \delta$ and $b_2-a_2 \geq \delta$.
 
 \smallskip
 
  Suppose that for every  $(y_1, \dots, y_{m+m'}) \in \Xi(B_{j,m},B_{j',m'},m,m',\delta)$  there exists a partition  $I_1 \cup I_2 \cup I_3$ of $\{1, \dots, m+m'\}$ with 
$$ \sum_{i \in I_1} y_i \leq b_1 \quad \mbox{ and } \quad  \sum_{i \in I_2} y_i \leq  b_2  
\quad \mbox{ and } \quad  \sum_{i \in I_3} y_i \leq  \frac{1}{2} - b_1 -a_2. $$
Then there exists a constant $\varepsilon_1>0$ (dependent only on $\varepsilon_0$ and $\delta$), so that the following is true for large $x$:  If $q \in Q(x;\delta,\underline{A},\underline{B},\varepsilon,j,j',m,m',\varepsilon_0)$ and $q > x^{1/2-\varepsilon_1}$, then $q$ has a factorization $q=uvr$ with with $r \in  [x^{a_1}, x^{b_1}]$ and $u \in [x^{a_2},x^{b_2}]$. 
\end{lemma}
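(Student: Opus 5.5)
The plan is to mirror the proof of Lemma~\ref{lem:partition1}, now building two factors in succession by a greedy procedure that uses the small primes of $ee'$ as fine adjusters. Fix $q=ee'\prod_{i=1}^m f_i\prod_{i=1}^{m'} f'_i\in Q(x;\delta,\underline{A},\underline{B},\varepsilon,j,j',m,m',\varepsilon_0)$ with $q>x^{1/2-\epsilon}$, where $\epsilon$ is small compared to $\varepsilon_0$ and $\delta$. As before, set $y_i=\log_x(f_i)/(1-\varepsilon_0)$, $y_{m+i}=\log_x(f'_i)/(1-\varepsilon_0)$, and let $z_1,\dots,z_s<\delta$ be the logarithms (base $x$) of the prime factors of $ee'$. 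Then $(y_1,\dots,y_{m+m'})\in\Xi(B_{j,m},B_{j',m'},m,m',\delta)$, so the hypothesis supplies a partition $I_1\cup I_2\cup I_3$ with the three stated bounds. Write $Y_k=\sum_{i\in I_k}(1-\varepsilon_0)y_i$. Then $Y_1\le(1-\varepsilon_0)b_1$, $Y_2\le(1-\varepsilon_0)b_2$, $Y_3\le(1-\varepsilon_0)(\tfrac12-b_1-a_2)$, and $Y_1+Y_2+Y_3+Z=\log_x q>\tfrac12-\epsilon$, where $Z=\sum z_i$.

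First I construct $r$. Start from $Y_1$ and add small primes $z_i$ one at a time until the total reaches $a_1$, or until the small primes run out. Since each $z_i<\delta\le b_1-a_1$ and $Y_1\le b_1$, the running total never jumps over the window $[a_1,b_1]$. Two cases arise. In case (a) we reach a total $\rho_1\in[a_1,b_1]$. In case (b) all small primes are used up and the total $Y_1+Z$ is still below $a_1$. Case (b) cannot happen: it would give
\[
\log_x q< a_1+Y_2+Y_3\le a_1+(1-\varepsilon_0)\big(b_2+\tfrac12-b_1-a_2\big)\le \tfrac12-\varepsilon_0(\tfrac12-b_1-a_2+b_2),
\]
where the last step uses $a_1-a_2\le b_1-b_2$. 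This contradicts $q>x^{1/2-\epsilon}$ once $\epsilon$ is small enough. Let $r$ be the product of the $f$'s indexed by $I_1$ and the chosen small primes.

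Next I construct $u$. Start from $Y_2$ and add the remaining small primes until the total reaches $a_2$. Again each step is smaller than $\delta\le b_2-a_2$ and $Y_2\le b_2$, so the total stays at most $b_2$. If the small primes run out first, then $\log_x q<\rho_1+a_2+Y_3\le b_1+a_2+(1-\varepsilon_0)(\tfrac12-b_1-a_2)$, which is less than $\tfrac12$ by an amount of order $\varepsilon_0$, using $b_1+a_2\le b_1+b_2<\tfrac12$. This is again a contradiction. So we obtain $u\mid q/r$ with $u\in[x^{a_2},x^{b_2}]$, and we put $v=q/(ur)$. Finally, choose $\varepsilon_1$ smaller than a fixed fraction of $\varepsilon_0\min\{\tfrac12-b_1-a_2,\ \tfrac12-b_1-a_2+b_2\}$. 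This quantity depends only on the parameters; if $\varepsilon_1$ must depend only on $\varepsilon_0$ and $\delta$, note that all the gaps can be bounded below in terms of these.

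The main obstacle is the failure case in step (a). The inequality $b_1-b_2\ge a_1-a_2$ is exactly what is needed to rule it out, and I would verify this bookkeeping carefully. The rest is the same discrete intermediate-value argument as in Lemma~\ref{lem:partition1}.
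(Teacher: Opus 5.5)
Your proof is correct and is essentially the paper's argument: the same discrete intermediate-value step, using the prime factors of $ee'$ (each smaller than $x^\delta$) as fine adjusters. The only difference is the order. You build $r$ first and then $u$ from the leftover small primes, whereas the paper builds $u$ first from all the small primes and then $r$ from the rest; in both orders the hypothesis $b_1-b_2\ge a_1-a_2$ is used exactly in the step that produces $r$. Your remark on $\varepsilon_1$ also holds: $a_2\le b_2-\delta$ and $b_1+b_2<\tfrac12$ give $\tfrac12-b_1-a_2>\delta$, so $\varepsilon_1=\varepsilon_0\delta/2$ suffices.
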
 

\begin{proof} The proof of Lemma~\ref{lem:partition2} is very similar to the proof of Lemma~\ref{lem:partition1}:

 We first fix some $q=e e' \prod_{i=1}^m f_i \prod_{i=1}^{m'} f'_i    \in  Q(x;\delta,\underline{A},\underline{B},\varepsilon,j,j',m,m',\varepsilon_0)$ with $q > x^{1/2-\epsilon}$, where $\epsilon$ is very small.  Denoting by $p_1 \dots p_s$   the prime factorization of $e e'$, we know that  $p_i <x^\delta$ for all $i$ and set
\begin{align*}
&y_i = \frac{1}{(1-\varepsilon_0)}  \log_x(f_i)  \qquad\,\,\, \mbox{ for } 1 \leq i \leq m, \\
 &y_{m+i} = \frac{1}{(1-\varepsilon_0)}   \log_x(f'_i)  \quad \mbox{ for } 1 \leq i \leq m', \\
 &z_i = \log_x(p_i)   \qquad \qquad \qquad \,\, \mbox{ for } 1 \leq i \leq s.
\end{align*} 
Then there exists a partition 
$I_1 \cup I_2 \cup I_3$ of $\{1, \dots, m+m'\}$ with 
$$ \sum_{i \in I_1} (1-\varepsilon_0) y_i   < b_1 \quad \mbox{ and } \quad  \sum_{i \in I_2} (1-\varepsilon_0) y_i   < b_2 \quad \mbox{ and } \quad \sum_{i \in I_3} (1-\varepsilon_0) y_i \leq  (1-\varepsilon_0) \left(\tfrac{1}{2} -b_1-a_2\right).$$
Since $q > x^{1/2-\epsilon}$, we also know that 
\begin{align*}
&\sum_{i \in I_2} (1-\varepsilon_0) y_i +  \sum_{i=1}^s z_i > \tfrac{1}{2}-\epsilon- b_1- (1-\varepsilon_0)\left(\tfrac{1}{2} -b_1 -a_2 \right) \geq    a_2.
 \end{align*} 
 But $z_i < \delta$ for all $i$  and so there exists $J_2 \subseteq \{1, \dots, s\}$ with  
 \begin{align*}
&\sum_{i \in I_2} (1-\varepsilon_0) y_i +  \sum_{i \in J_2} z_i \in [a_2,b_2].
 \end{align*} 
 Further,  we then also have  
 \begin{align*}
&\sum_{i \in I_1} (1-\varepsilon_0) y_i +  \sum_{i \not\in J_2} z_i > \tfrac{1}{2}-\epsilon- b_2-  (1-\varepsilon_0)\left(\tfrac{1}{2} -b_1 -a_2 \right) \geq  b_1  - b_2 +a_2 \geq  a_1.
\end{align*} 
Hence there also exists $J_1 \subseteq \{1, \dots, s\} \setminus J_2$ with 
\begin{align*}
 \sum_{i \in I_1} (1-\varepsilon_0) y_i +  \sum_{i \in J_1}^s z_i  \in [a_1,b_1].
\end{align*}
Multiplying the corresponding $f_i$, $f'_i$ and $p_i$ together, we then get a factorization $q=uvr$ with   $r \in  [x^{a_1}, x^{b_1}]$ and $u \in [x^{a_2},x^{b_2}]$. 
\end{proof}

\subsubsection{4 factors}

Finally, we need a  condition which ensures that $q \in Q^*(x;\delta,\underline{A},\underline{B},\varepsilon,\varepsilon_0)$ has a factorization $q=uvr$ with the following properties: $r \in [x^{a_1},x^{b_1}]$ and $u \in [x^{a_2},x^{b_2}]$ and    there exists $d_1 \mid r$ with $d_1 \in [r^2x^{a_3},r^2x^{b_3}]$ . This is the condition which is typically the hardest to satisfy, and so we work more carefully and consider moduli $q$ of a specific size, given by $q \asymp x^{1/2+2\omega_0}$.

\begin{lemma}\label{lem:partition3} Let $Q(x;\delta,\underline{A},\underline{B},\varepsilon,j,j',m,m',\varepsilon_0)$ be as given in Definition~\ref{def:neededmoduli} and let $\Xi(B_1,B_2,m_1,m_2,\delta)$ be as given in Definition~\ref{def:checkset}. 
 Consider $a_1, a_2, a_3, b_1, b_2, b_3$   with   $3(b_1 - a_1) + (a_3 -b_3)\geq 0$ and $b_1 -b_2 \geq a_1-a_2$ and $b_1-a_1 \geq \delta$ and $b_2-a_2 \geq \delta$ and $b_3-a_3 \geq \delta$.
 
 \smallskip
 Let $\omega_0 \geq -\varepsilon_0$ and suppose  that for every  $(y_1, \dots, y_{m+m'}) \in \Xi(B_{j,m},B_{j',m'},m,m',\delta)$  there exists a   partition $I_1 \cup I_2 \cup I_3 \cup I_4 $ of $\{1, \dots, m+m'\}$ with 
$$ \sum_{i \in I_1} y_i \leq 2a_1+b_3 \quad \mbox{ and } \quad  \sum_{i \in I_2} y_i \leq  b_2  
\quad \mbox{ and } \quad  \sum_{i \in I_3} y_i \leq  \frac{1}{2} +2\omega_0 - b_1 -a_2  \quad \mbox{ and } \quad  \sum_{i \in I_4} y_i \leq a_1 -2b_1-a_3. $$

Then for   large $x$, every $q \in Q(x;\delta,\underline{A},\underline{B},\varepsilon,j,j',m,m',\varepsilon_0)$ with $q \asymp x^{1/2+2\omega_0}$ has a factorization  $q=uvr$ with the following properties: $r \in [x^{a_1},x^{b_1}]$ and $u \in [x^{a_2},x^{b_2}]$ and    there exists $d_1 \mid r$ with $d_1 \in [r^2x^{a_3},r^2x^{b_3}]$. 
\end{lemma}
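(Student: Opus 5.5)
The plan is to follow the proofs of Lemma~\ref{lem:partition1} and Lemma~\ref{lem:partition2}. The large factors $f_i, f'_i$ are distributed according to the partition $I_1\cup I_2\cup I_3\cup I_4$, and the $x^\delta$-smooth part $ee'$ is used as a reservoir of small primes that fills each factor into its target window.

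\textbf{Setup and the factor $u$.} Fix $q=ee'\prod f_i\prod f'_i \in Q(x;\delta,\underline{A},\underline{B},\varepsilon,j,j',m,m',\varepsilon_0)$ with $q\asymp x^{1/2+2\omega_0}$. Let $p_1\cdots p_s$ be the prime factorization of $ee'$, and set $y_i=(1-\varepsilon_0)^{-1}\log_x(f_i)$ (similarly for the $f'_i$) and $z_i=\log_x p_i<\delta$. Then $(y_1,\dots,y_{m+m'})\in\Xi(B_{j,m},B_{j',m'},m,m',\delta)$, so we obtain the partition $I_1,\dots,I_4$. Write $Y_t=\sum_{i\in I_t}(1-\varepsilon_0)y_i$ and $Z=\sum_i z_i$, so that $Y_1+Y_2+Y_3+Y_4+Z=\tfrac12+2\omega_0+O(1/\log x)$. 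The intended roles are: $u$ is built from $I_2$, $v$ from $I_3$, and $r=d_1r_1$ with $d_1$ built from $I_1$ and $r_1$ from $I_4$, each topped up with small primes. I first construct $u$ exactly as in Lemma~\ref{lem:partition2}. Since $Y_2<b_2$ and the total mass available is large (using the $I_3$ bound $\tfrac12+2\omega_0-b_1-a_2$ together with $Y_1+Y_4$ and the eventual $r\le x^{b_1}$), adding small primes one at a time carries $Y_2$ into $[a_2,b_2]$ without jumping over it, because $b_2-a_2\ge\delta$.

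\textbf{Reformulating the conditions on $r$.} Write $\eta=\log_x d_1$ and $\rho=\log_x r_1$. The requirement $d_1\in[r^2x^{a_3},r^2x^{b_3}]$ becomes $-\eta-2\rho\in[a_3,b_3]$, that is, $L_2:=\eta+2\rho\in[-b_3,-a_3]$. The requirement $r\in[x^{a_1},x^{b_1}]$ becomes $L_1:=\eta+\rho\in[a_1,b_1]$. So we must steer the pair $(\eta,\rho)$ into a parallelogram. Its corners are $(\eta,\rho)=(2a_1+b_3,\,-a_1-b_3)$, $(2b_1+a_3,\,-b_1-a_3)$ and so on. The hypotheses $Y_1\le 2a_1+b_3$ and $Y_4\le a_1-2b_1-a_3$ say that the starting point $(Y_1,Y_4)$ lies below and to the left of the appropriate corner. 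Adding a small prime to $d_1$ moves $(L_1,L_2)$ by $(z,z)$, and adding it to $r_1$ moves them by $(z,2z)$.

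\textbf{The walk (main obstacle).} I would first feed primes into $r_1$ until $L_2$ reaches $[-b_3,-a_3]$; no step jumps over this window, since the steps are $2z<2\delta$ and I can refine by switching to $d_1$ near the edge. I would then alternate moves: one prime to $d_1$ together with a compensating choice so that $L_2$ stays in its window while $L_1$ increases, until $L_1\in[a_1,b_1]$. The condition $3(b_1-a_1)+(a_3-b_3)\ge0$ should be exactly what guarantees that the width of the parallelogram in the relevant direction allows each increment of size $<\delta$ (or $<2\delta$) to be absorbed without leaving the $L_2$-window. The condition $b_1-b_2\ge a_1-a_2$, as in Lemma~\ref{lem:partition2}, together with the $I_3$ bound, guarantees that enough small mass remains after building $u$ for the walk to reach $L_1\ge a_1$. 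I expect this two-dimensional greedy step to be the delicate part, in particular checking that the order of moves never overshoots both windows simultaneously. If the naive alternation fails, I would instead first overfill $d_1$ up to the corner value $2a_1+b_3$ and then move only along $r_1$, checking the endpoints explicitly. Finally, $v$ is $q/(ur)$, and multiplying the chosen $f_i$, $f'_i$, $p_i$ gives the factorization $q=uvr$.
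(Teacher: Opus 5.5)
Your setup, your construction of $u$, and your reformulation $L_1=\eta+\rho\in[a_1,b_1]$, $L_2=\eta+2\rho\in[-b_3,-a_3]$ are all fine. The gap is in the step that builds $r$ and $d_1$, which is the only genuinely new part beyond Lemma~\ref{lem:partition2}: that step is not established, and the walk you propose does not work.

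Each of your two moves increases $L_2$ by at least as much as $L_1$, since they shift $(L_1,L_2)$ by $(z,z)$ or by $(z,2z)$. So there is no ``compensating choice'': once $L_2\ge -b_3$, $L_1$ can grow by at most $b_3-a_3$ before $L_2$ leaves its window. Feeding $r_1$ first is therefore exactly the wrong order. Take $\delta=0.03$, $\omega_0=0$, $(a_1,b_1)=(0.37,0.40)$, $(a_2,b_2)=(0.05,0.08)$, $(a_3,b_3)=(-0.43,-0.40)$; all hypotheses hold. Let $q$ be entirely $x^\delta$-smooth. When $L_2$ first enters $[0.40,0.43]$ you have $L_1=(L_2+\eta)/2\approx 0.2$, and afterwards $L_1$ can gain at most $0.03$, so it never reaches $a_1$. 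Yet a valid factorization exists, for example $\log_x r\approx0.38$, $\log_x d_1\approx 0.34$, $\log_x r_1\approx0.04$: here $d_1$ carries almost all of $r$.

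Your fallback is also unverified. Along any line of fixed $\eta$, the admissible $\rho$-window has width at most $(b_3-a_3)/2$, which can be as small as $\delta/2$, while a single prime moves $\rho$ by almost $\delta$. Note too that $3(b_1-a_1)+(a_3-b_3)\ge 0$ is not a width condition. Its role is to give $Y_1+Y_4\le b_1$ and $Y_2+Z\ge a_2$.

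The missing idea is to decouple the two conditions. Use a move your walk never considers: transferring a small prime between $d_1$ and $r_1$, which keeps $L_1$ fixed and changes $L_2$ by $\pm z$.

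First build $r$ alone. By the hypothesis $3(b_1-a_1)+(a_3-b_3)\ge0$ you have $Y_1+Y_4\le (2a_1+b_3)+(a_1-2b_1-a_3)\le b_1$. The primes not used for $u$ give $Y_1+Y_4+\sum_{i\notin J_2} z_i\ge \tfrac12+2\omega_0-b_2-(\tfrac12+2\omega_0-b_1-a_2)\ge a_1$. Hence some $J_1$ yields $\log_x r=c\in[a_1,b_1]$.

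Only then split $r$. Let $d_1$ be the $I_1$-factors times the primes of some $J'\subseteq J_1$. As $J'$ grows from $\emptyset$ to $J_1$ one prime at a time, $\log_x d_1$ increases from $Y_1\le 2a_1+b_3\le 2c+b_3$ to $c-Y_4\ge 2b_1+a_3\ge 2c+a_3$. It moves in steps $<\delta\le b_3-a_3$, so it must land in $[2c+a_3,2c+b_3]$, which is exactly $d_1\in[r^2x^{a_3},r^2x^{b_3}]$. This two-stage, one-dimensional intermediate-value argument is the paper's proof.
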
 

\begin{proof} We first fix some $q=e e' \prod_{i=1}^m f_i \prod_{i=1}^{m'} f'_i    \in  Q(x;\delta,\underline{A},\underline{B},\varepsilon,j,j',m,m',\varepsilon_0)$ with $q \asymp x^{1/2+2\omega_0}$.  Denoting by $p_1 \dots p_s$   the prime factorization of $e e'$, we know that  $p_i <x^\delta$  and set
\begin{align*}
&y_i =   \log_x(f_i)  \qquad\,\,\, \mbox{ for } 1 \leq i \leq m, \\
 &y_{m+i} =    \log_x(f'_i)  \quad \mbox{ for } 1 \leq i \leq m', \\
 &z_i = \log_x(p_i)     \qquad \, \,\, \mbox{ for } 1 \leq i \leq s.
\end{align*} 
Then there exists a partition 
$I_1 \cup I_2 \cup I_3 \cup I_4$ of $\{1, \dots, m+m'\}$ with 
$$ \sum_{i \in I_1} y_i \leq 2a_1+b_3 \quad \mbox{ and } \quad  \sum_{i \in I_2} y_i \leq  b_2  
\quad \mbox{ and } \quad  \sum_{i \in I_3} y_i \leq  \frac{1}{2} +2\omega_0 - b_1 -a_2  \quad \mbox{ and } \quad  \sum_{i \in I_4} y_i \leq  a_1 -2b_1-a_3. $$ 
Since $q \asymp x^{1/2+2\omega_0}$, we also know that 
\begin{align*}
 \sum_{i \in I_2}   y_i +  \sum_{i=1}^s z_i &> \tfrac{1}{2}+2 \omega_0-  (2a_1+b_3)- ( \tfrac{1}{2} +2\omega_0 - b_1 -a_2) - (a_1 -2b_1-a_3 ) \\
&\geq a_2  +(3b_1 - 3a_1+a_3 -b_3  )   \geq a_2.
 \end{align*} 
 But $z_i < \delta$ for all $i$  and so there exists $J_2 \subseteq \{1, \dots, s\}$ with  
 \begin{align*}
&\sum_{i \in I_2}   y_i +  \sum_{i \in J_2} z_i \in [a_2,b_2].
 \end{align*} 
This sum corresponds to factor $u$. Next, we observe that 
 \begin{align*}
 &\sum_{i \in I_1}   y_i + \sum_{i \in I_4} y_i \leq (2a_1+b_3 ) +  (  a_1 -2b_1-a_3) = b_1+ (3a_1 -3b_1 + b_3-a_3 )  \leq b_1,
 \\
&\sum_{i \in I_1}   y_i + \sum_{i \in I_4} y_i+ \sum_{i \not\in J_2} z_i > \tfrac{1}{2}+2\omega_0- b_2-   \left(\tfrac{1}{2}+2\omega_0 -b_1 -a_2 \right) \geq  b_1  - b_2 +a_2 \geq  a_1.
\end{align*} 
Hence there also exists $J_1 \subseteq \{1, \dots, s\} \setminus J_2$ with 
\begin{align*}
 \sum_{i \in I_1}   y_i + \sum_{i \in I_4} y_i+ \sum_{i  \in J_1} z_i \in [a_1,b_1].
\end{align*} 
This sum corresponds to factor $r$. 
Finally, consider $c \in [a_1,b_1]$. Notice that 
 \begin{align*}
 &\sum_{i \in I_1}   y_i  \leq 2a_1+b_3  \leq  2c +b_3 ,
 \\
&\sum_{i \in I_1}   y_i +  \sum_{i \in J_1} z_i \geq a_1 - (a_1 -2b_1-a_3) \geq 2b_1 +a_3 \geq 2c+a_3.
\end{align*}

Hence for any $c \in [a_1,b_1]$ we can find $J' \subseteq J_1$ with 
\begin{align*}
\sum_{i \in I_1}   y_i +  \sum_{i \in J'} z_i  \in [2c+a_3,2c+b_3].
\end{align*}
This sum corresponds to factor $d_1$ of $r$. Overall, multiplying the corresponding $f_i$, $f'_i$ and $p_i$ together, we then get that $q \asymp x^{1/2+2\omega_0}$ has a factorization  $q=uvr$ with   $r \in [x^{a_1},x^{b_1}]$ and $u \in [x^{a_2},x^{b_2}]$ so that  for every $R \in   [x^{a_1},x^{b_1}]$, there exists $d_1 \mid r$ with $d_1 \in [R^2x^{a_3},R^2x^{b_3}]$. 
\end{proof}

\subsection{An equidistribution criterion} 

Recall now that our goal is to find conditions on $\delta$, $\underline{A}$ and $\underline{B}$ which ensure that every $f \in \mathcal{H}(\xi_1, \xi_2,\xi_3)$  has equidistribution properties for moduli in $Q^*(x;\delta,\underline{A},\underline{B},\varepsilon,\varepsilon_0)$.  Combining the partition lemmas we just proved with the equidistribution estimates of Section~\ref{sec:equiest}, we can rephrase these conditions in terms of properties of $\Xi(B_1,B_2,m_1,m_2,\delta)$, as seen in Proposition~\ref{prop:tupleconditions}.

\begin{prop}\label{prop:tupleconditions}
Let  $T_k(\delta,\underline{A},\underline{B},\varepsilon)$ be as given in Definition~\ref{def:intregion},  let $Q^*(x;\delta,\underline{A},\underline{B},\varepsilon,\varepsilon_0)$ be as given in Definition~\ref{def:neededmoduli} and  let $\Xi(B_1,B_2,m_1,m_2,\delta)$ be as given in Definition~\ref{def:checkset}.  \smallskip

Let $\epsilon = 10^{-10}$. Suppose $\xi_1, \xi_2, \xi_3 \in (0,1)$ satisfy the following conditions: 
\begin{enumerate}[{\rm (I)}]
\item Type I condition: $\min\left\{  \xi_1 - 4 A_n + \tfrac{2}{3}   , \tfrac{9}{7}-\tfrac{34}{7} A_n  \right\}-2\epsilon  > \delta$. $($Recall that $\underline{A} =( A_1, \dots, A_n).)$
\item Type II condition: $
\tfrac{19}{2}-36 A_n   - 13\delta   +100\epsilon  \geq 0 $ and 
$$\min\left\{  \, \tfrac{\xi_2}{10} -\tfrac{32A_n}{10} + \tfrac{8}{10}  , \,
\tfrac{\xi_2}{4} +\tfrac{11}{16} -3A_n  \right\} -2\epsilon \geq \delta.$$
\item Type III condition:  $ \frac{11}{8}-\frac{7}{2}
 A_n -\frac{9}{8}\xi_3   -2\epsilon > \delta$.
\end{enumerate}

\smallskip 
Suppose further that for all $j, j'\in \{1, \dots, n\}$ and for all $m, m' \in \{1, \dots, \lfloor \frac{1}{\delta} \rfloor\}$ with $m+m'>0$, every   tuple  $(y_1, \dots, y_{m+m'}) \in \Xi(B_{j,m},B_{j',m'},m,m',\delta)$ has all of the following properties: 
\begin{enumerate}[{\rm (A)}]
\item Type I condition: There exists a partition $I_1 \cup I_2$ of $\{1, \dots, m+m'\}$ with  
\begin{align*}
\sum_{i \in I_1} y_i \leq \xi_1-2\epsilon \quad \mbox{ and } \quad  \sum_{i \in I_2} y_i \leq  \frac{1}{6}-   4 \omega(j,j')  -2\epsilon \quad \mbox{ where } \quad  \omega(j,j')=\left( \frac{A_j + A_{j'}}{2}  - \frac{1}{4} \right)  . 
\end{align*}
\item Type IIa condition: 
 There exists a partition $I_1 \cup I_2$ of $\{1, \dots, m+m'\}$ with  
\begin{align*}
\sum_{i \in I_1} y_i \leq \frac{2}{5}+\frac{24\omega(j,j')}{5} +\frac{7\delta}{5}   -2 \epsilon \quad \mbox{ and } \quad  \sum_{i \in I_2} y_i \leq \frac{1}{14}     -\frac{24\omega(j,j')}{7} -2\epsilon.
\end{align*}
\item Type IIb condition:
There also exists a partition $I_1 \cup I_2 \cup I_3$ of $\{1, \dots, m+m'\}$ with 
 \begin{align*}
 &\sum_{i \in I_1} y_i \leq  \frac{1}{3}+\frac{24\omega(j,j')}{3} +\frac{7\delta}{3}   -4\epsilon, \\  &\sum_{i \in I_2} y_i \leq    
  \frac{1}{10}-\frac{34\omega(j,j')}{5} -\frac{7\delta}{5}    
 -4 \epsilon,  \nonumber  \\
&\sum_{i \in I_3} y_i \leq            \frac{1}{35}+\frac{22\omega(j,j')}{35} +\frac{21\delta}{35}     - 4\epsilon . \nonumber
\end{align*}
\item Type IIc condition: For any $\omega_0 \in [-\epsilon, \omega(j,j')]$ and any $\gamma$ with $\xi_2 -\epsilon \leq \gamma \leq  \tfrac{1}{3}+\tfrac{24\omega(j,j')}{3} +\tfrac{7\delta}{3}  +3\epsilon$, 
there   exists a partition $I_1 \cup I_2 \cup I_3 \cup I_4$   with 
\begin{align*}
&\sum_{i \in I_1} y_i \leq   \gamma-2\delta       -    8\omega_0    -\epsilon, \\
  &\sum_{i \in I_2} y_i \leq  \frac{1}{2}-\gamma- 2\omega_0 -\epsilon, \\
   &\sum_{i \in I_3} y_i \leq  4\omega_0   +\delta    -\epsilon ,\\
    &\sum_{i \in I_4} y_i \leq   8\omega_0 .
\end{align*} 
\item Type III condition: There also exists a partition $I_1 \cup I_2$ of $\{1, \dots, m+m'\}$ with  
\begin{align*}
\sum_{i \in I_1} y_i \leq  1- 6\omega(j,j') -\frac{3\xi_3}{2}-2\epsilon \quad \mbox{ and } \quad  \sum_{i \in I_2} y_i \leq    \frac{5\omega(j,j')}{2} +\frac{3 \xi_3}{8}  -2\epsilon. 
\end{align*} 
\end{enumerate}
Then for every $f \in \mathcal{H}(\xi_1,\xi_2,\xi_3)$,  $x>1$, $\varepsilon_0 >0$, $A>0$ and    $a\in \mathbb{Z}$  with $(a,P(x))=1$,   
\begin{align*} 
\sum_{\substack{q \in Q^*(x;\delta,\underline{A},\underline{B},\varepsilon,\varepsilon_0) \\  q \mbox{ \scriptsize  is squarefree  }   }}  \Bigg| \sum_{\substack{n \in [x,2x] \\ n \equiv a (q)}} f(n;x) - \dfrac{1}{\phi(q)} \sum_{\substack{ n \in [x,2x] \\ (n,q)=1}}f(n;x) \Bigg| \ll_{A,\varepsilon_0} \dfrac{x}{\log(x)^A}.
\end{align*}
\end{prop}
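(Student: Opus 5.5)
Fix $f\in\mathcal{H}(\xi_1,\xi_2,\xi_3)$. The set $Q^*$ is a union of only $O_\delta(1)$ pieces $Q(x;\delta,\underline{A},\underline{B},\varepsilon,j,j',m,m',\varepsilon_0)$, so it is enough to treat one piece at a time. I would also split each piece dyadically, into $q\asymp x^{1/2+2\omega_0}$ with $O(\log x)$ values of $\omega_0$.

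\emph{Small moduli.} For $q\le x^{1/2-\varepsilon_1}$ the estimate follows from the Bombieri--Vinogradov theorem for Dirichlet convolutions. Type II sequences satisfy Siegel--Walfisz, and Type I and III sequences contain a smooth factor, so they do too.

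\emph{Large moduli.} Every $q$ in the piece satisfies $q\le x^{(1-\varepsilon_0)(A_j+A_{j'})}$. Hence $q\ll x^{1/2+2\omega}$ with $\omega=\omega(j,j')\le 2A_n-\tfrac12$. For $q>x^{1/2-\varepsilon_1}$ I would apply the equidistribution lemma appropriate to the type of $f$, with this $\omega$.

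\textbf{Type I} ($\beta$ smooth, $\gamma\ge\xi_1-\epsilon$). If $\gamma>\tfrac12+2\omega+\varepsilon$, Lemma~\ref{lem:typeIBI} needs no factorization. If $\gamma\in(\tfrac12,\tfrac12+2\omega+\varepsilon]$, it needs $68\omega+14\delta<1$, which is the second entry of (I) since $\omega\le 2A_n-\tfrac12$. If $\gamma\le\tfrac12$, it needs $3\gamma-12\omega-3\delta>1$; this follows from the first entry of (I), possibly after applying Lemma~\ref{lem:typeIIPoly} near $\gamma=\tfrac12$. The required divisor $r\in[x^{\gamma-\delta-3\varepsilon},x^{\gamma-3\varepsilon}]$ (or the analogue with $1-\gamma$) comes from Lemma~\ref{lem:partition1}. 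I would take $b\approx\xi_1$ and $\tfrac12-a\approx\tfrac16-4\omega$; condition (A) supplies the hypothesis, while (I) guarantees $b-a\ge\delta$ over the whole $\gamma$-range. For $\gamma$ larger than $\xi_1$ the window shifts upward, and I would argue monotonically that the partition from (A) still suffices.

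\textbf{Type II} ($\xi_2-\epsilon\le\gamma\le\tfrac12$ after swapping $\alpha,\beta$). I would cover the $\gamma$-range by three estimates:
\begin{itemize}
\item Lemma~\ref{lem:typeIIPoly} for $\gamma>\tfrac25+\tfrac{24\omega}{5}+\tfrac{7\delta}{5}$, using Lemma~\ref{lem:partition1} with (B);
\item Lemma~\ref{lem:typeI(ii)Poly} for $\gamma>\tfrac13+8\omega+\tfrac{7\delta}{3}$, using Lemma~\ref{lem:partition2} with (C);
\item Lemma~\ref{lem:typeIS} for the remaining range $[\xi_2-\epsilon,\tfrac13+8\omega+\tfrac{7\delta}3]$, using Lemma~\ref{lem:partition3} with (D).
\end{itemize}
In the last case the dyadic parameter $\omega_0$ plays the role of $\omega$ in Lemma~\ref{lem:typeIS}, since $q\asymp x^{1/2+2\omega_0}$. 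I would choose $a_1=\gamma-\delta-3\varepsilon$ and $b_1=\gamma-3\varepsilon$, take $a_2,b_2$ from the $u$-range $x^{1-\gamma-\cdots}d^{-1}$ with $d=x^{1/2+2\omega_0}$, and take $a_3,b_3$ from the $d_1$-range $r^2x^{2-\gamma}d^{-4}$. With these choices the four bounds in (D) are exactly $2a_1+b_3$, $b_2$, $\tfrac12+2\omega_0-b_1-a_2$ and $a_1-2b_1-a_3$, up to $\epsilon$-adjustments. The scalar inequalities needed are $8\omega+4\delta+2\gamma<1$, $32\omega+10\delta<\gamma$ and $48\omega+16\delta-4\gamma<-1$, together with those of the other two lemmas. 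At the endpoints $\gamma=\xi_2$ and $\gamma=\tfrac12$ these should reduce to (II). I expect them to take the form $\tfrac{\xi_2}{10}-\tfrac{32A_n}{10}+\tfrac8{10}\ge\delta$, and so on, after substituting $\omega\le2A_n-\tfrac12$.

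\textbf{Type III.} Lemma~\ref{lem:typeIII} needs $28\omega+9\xi_3+8\delta<4$, which is (III), and a divisor $r\in[x^{1/3+\delta/3-4\omega/3},x^{1/3+4\delta/3-4\omega/3}]$. That divisor comes from Lemma~\ref{lem:partition1} with (E).

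\emph{Remaining cases and the main obstacle.} Moduli with many small prime factors $p\le D_0$ are handled by the trivial bound, as in Lemma~\ref{lem:theorem58}. The step I expect to be hardest is the Type II bookkeeping. One has to check that the three ranges of $\gamma$ overlap and cover $[\xi_2-\epsilon,\tfrac12]$, and that the fixed interval bounds in (B)--(D) really match the moving windows of Lemmas~\ref{lem:partition2} and~\ref{lem:partition3} uniformly in $\gamma$ and $\omega_0$. This matters most in (D), where the windows depend on $r$ and $d$ and $\omega_0$ may be much smaller than $\omega(j,j')$. For that step I would first verify the hypotheses of Lemma~\ref{lem:partition3} symbolically at the worst-case $\gamma$ and $\omega_0$.
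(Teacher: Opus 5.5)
\textbf{Gap: the lemmas' $\delta$-parameter is never enlarged, so the divisor windows do not match (A), (B), (E).} Your overall structure follows the paper: split $Q^*$ into the pieces $Q(\cdot\,;j,j',m,m')$, use Bombieri--Vinogradov below $x^{1/2-\varepsilon_1}$, then one equidistribution lemma per type and $\gamma$-range plus a partition lemma. The problem is that you apply Lemmas~\ref{lem:typeIBI}, \ref{lem:typeIIPoly} and~\ref{lem:typeIII} with their $\delta$-parameter equal to the support's $\delta$. The required divisor then lies in a window of width $\delta$ that slides with $\gamma$, and the hypotheses are not calibrated to such windows.
\begin{itemize}
\item In Type I, Lemma~\ref{lem:partition1} with $b\approx\xi_1$ and $a\approx\frac13+4\omega$ gives $r\in[x^{1/3+4\omega},x^{\xi_1}]$. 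This lies inside $[x^{\gamma-\delta},x^{\gamma}]$ only if $\gamma\le\frac13+4\omega+\delta$, and (I) excludes exactly that for every $\gamma\ge\xi_1-\epsilon$. No monotonicity argument rescues a fixed-width window that moves with $\gamma$.
\item In Type IIa at $\gamma=\frac12$, a width-$\delta$ window would require the $I_2$-part to be at most $\delta$, whereas (B) only bounds it by $\frac1{14}-\frac{24\omega}{7}$.
\item In Type III, the $I_1$-bound in (E) exceeds $\frac13+\frac{4\delta}{3}-\frac{4\omega}{3}$ by $\frac43\bigl(\frac12-\frac72\omega-\frac98\xi_3-\delta\bigr)$. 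This is positive precisely because (III) is strict.
\end{itemize}

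\textbf{The missing idea.} The $\delta$ in the equidistribution lemmas is a free parameter, and for each $\gamma$ you should take it as large as the lemma allows:
\begin{itemize}
\item $\delta^*(\gamma)=\gamma-4\omega-\frac13-\epsilon$ in Type I with $\gamma\le\frac12$;
\item $\frac{5\gamma}{7}-\frac27-\frac{24\omega}{7}-\epsilon$ in IIa;
\item $\frac{3\gamma}{7}-\frac17-\frac{24\omega}{7}-\epsilon$ in IIb;
\item $\frac12-\frac72\omega-\frac98\xi_3-\epsilon$ in Type III.
\end{itemize}
With these choices the windows' endpoints are fixed or move monotonically in $\gamma$. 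Conditions (A), (B) and (E) are then precisely the hypotheses of Lemma~\ref{lem:partition1} for these windows, taken at the worst $\gamma$. Conditions (I)--(III) and the $\gamma$-thresholds serve only to guarantee $\delta^*\ge\delta$, i.e.\ $b-a\ge\delta$ in the partition lemmas.

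Only in IIc does the paper take the lemma's parameter equal to $\delta$. There your assignment of $a_i,b_i$ reproducing (D) is correct, and using $\omega_0$ in place of $\omega(j,j')$ in Lemma~\ref{lem:typeIS} is harmless.

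\textbf{Two smaller points.}
\begin{itemize}
\item The correct bound is $\omega(j,j')\le A_n-\frac14$, not $2A_n-\frac12$. Only the correct bound turns (I)--(III) into the lemmas' inequalities.
\item For $\gamma\in(\frac12,\frac12+2\omega]$ the divisor must sit just below $x^{1-\gamma}\approx x^{1/2}$. Condition (A) does not supply this, because its $I_2$-bound $\frac16-4\omega$ is too large. The paper handles this subrange separately, with $\delta^*=\frac1{14}-\frac{34}{7}\omega$ and its own partition requirement.
\end{itemize}
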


\begin{proof}
Recall that $\mathcal{H}(\xi_1,\xi_2,\xi_3)$ consists of convolutions of three different types, I, II and III.   Our goal is to show that for each type of $f$ and for   all $j, j'\in \{1, \dots, n\}$ and for all $m, m' \in \{1, \dots, \lfloor \frac{1}{\delta} \rfloor\}$,
\begin{align*} 
\sum_{\substack{q \in Q(x;\delta,\underline{A},\underline{B},\varepsilon,j,j',m,m',\varepsilon_0) \\  q \mbox{ \scriptsize  is squarefree  }   }}  \Bigg| \sum_{\substack{n \in [x,2x] \\ n \equiv a (q)}} f(n;x) - \dfrac{1}{\phi(q)} \sum_{\substack{ n \in [x,2x] \\ (n,q)=1}}f(n;x) \Bigg| \ll_{A,\varepsilon_0} \dfrac{x}{\log(x)^A}.
\end{align*}

We fix some choice of $j, j'\in \{1, \dots, n\}$ and   $m, m' \in \{1, \dots, \lfloor \frac{1}{\delta} \rfloor\}$ and may assume $m+m' >0$ (as the $m+m'=0$ case follows trivially from the other cases). For $q  \in Q(x;\delta,\underline{A},\underline{B},\varepsilon,j,j',m,m',\varepsilon_0)$ we have 
$ q \leq  x^{(1-\varepsilon_0)(A_j + A_{j'})} $ and so we will work with 
$$ \omega_{\begin{scriptsize}
\mbox{max}
\end{scriptsize}} =   \frac{A_j + A_{j'}}{2}  - \frac{1}{4}.$$

\smallskip
\underline{Case I}: First consider  $f(n;x) = (\alpha \star \beta)(n;x)$, where $\alpha$ is a coefficient sequence at scale $M$ and $\beta$ is a  smooth  coefficient sequence at scale $N$ with $M(x) N(x) \asymp x$  and  $
N(x) \geq x^{ \xi_1-\epsilon} 
$.

\medskip   Initially we assume $\gamma \leq \frac{1}{2}$ and define $
\delta^*(\gamma) =     \gamma - 4\omega_{\begin{scriptsize}
\mbox{max}
\end{scriptsize}} -\tfrac{1}{3}  -\epsilon 
$. For $\gamma \geq \xi_1-\epsilon$ we now have 
\begin{align*} 
 &3\gamma-12\omega_{\begin{scriptsize}
\mbox{max}
\end{scriptsize}}  -3\delta^*(\gamma) >1. 
\end{align*}
Also, the assumption $  \xi_1 - 4 A_n + \tfrac{2}{3}    -2\epsilon  > \delta$ ensures that $\delta^*(\gamma) \geq \delta$.

\medskip Now we   apply Lemma~\ref{lem:typeIBI} with $(\omega_{\begin{scriptsize}
\mbox{max}
\end{scriptsize}}, \gamma, \delta^*(\gamma))$ in the place of $(\omega, \gamma, \delta)$. For sufficiently small $\varepsilon'$,  any $f(n;x)$ of Type I has good equidistribution properties for moduli in  $D_I(x;\omega_{\begin{scriptsize}
\mbox{max}
\end{scriptsize}},\gamma,\delta^*(\gamma);\varepsilon')$. 
But this means $ Q(x;\delta,\underline{A},\underline{B},\varepsilon,j,j',m,m',\varepsilon_0)$ also has the desired equidistribution properties, provided it is a subset of $D_I(x;\omega_{\begin{scriptsize}
\mbox{max}
\end{scriptsize}},\gamma,\delta^*(\gamma);\varepsilon')$.  Recall that for $\gamma \leq \frac{1}{2}$, 
\begin{align*}
D_I(x;\omega,\gamma,\delta;\varepsilon) =
 \left\{d \in  [1, x^{1/2+2\omega}]\cap \mathbb{N}:  \exists r \mid d \mbox{ with } x^{\gamma-\delta -3\varepsilon} < r < x^{\gamma - 3\varepsilon} \right\}.
\end{align*}
 To show $ Q(x;\delta,\underline{A},\underline{B},\varepsilon,j,j',m,m',\varepsilon_0) \subseteq D_I(x;\omega_{\begin{scriptsize}
\mbox{max}
\end{scriptsize}},\gamma,\delta^*(\gamma);\varepsilon')$,  we now apply Lemma~\ref{lem:partition1}. We need that the relevant moduli $q$ have a factor $r$ with $x^{\gamma-\delta^*(\gamma) -3\varepsilon'} < r < x^{\gamma - 3\varepsilon'}$ when $\xi_1 -\epsilon \leq \gamma \leq \frac{1}{2}$. But 
$$   \gamma-\delta^*(\gamma) -3\varepsilon' =  
  4\omega_{\begin{scriptsize}
\mbox{max}
\end{scriptsize}} +\tfrac{1}{3}  +\epsilon -3\varepsilon'.$$

By Lemma~\ref{lem:partition1} elements of  $ Q(x;\delta,\underline{A},\underline{B},\varepsilon,j,j',m,m',\varepsilon_0) $ thus  have the desired factors provided that for  every  $(y_1, \dots, y_{m+m'}) \in \Xi(B_{j,m},B_{j',m'},m,m',\delta)$  there exists a partition $I_1 \cup I_2$ of $\{1, \dots, m+m'\}$ with  
\begin{align*}
\sum_{i \in I_1} y_i \leq \xi_1-2\epsilon \quad \mbox{ and } \quad  \sum_{i \in I_2} y_i \leq  \frac{1}{6}-   4\omega_{\begin{scriptsize}
\mbox{max}
\end{scriptsize}}   -2\epsilon. 
\end{align*}

\medskip On the other hand, if $\gamma \in (\frac{1}{2},\frac{1}{2} + 2\omega_{\begin{scriptsize}\mbox{max}\end{scriptsize}} + \varepsilon']$, we use  $
\delta^*(\gamma) =   \tfrac{1}{14}-\tfrac{68}{14}\omega_{\begin{scriptsize}
\mbox{max}
\end{scriptsize}} -\epsilon 
$, so that 
\begin{align}
 &68\omega_{\begin{scriptsize}
\mbox{max}
\end{scriptsize}} + 14 \delta^*(\gamma)    <1.
\end{align}  
Then $f(n;x)$ equidistributes for moduli in $D_I(x;\omega_{\begin{scriptsize}
\mbox{max}
\end{scriptsize}},\gamma,\delta^*(\gamma);\varepsilon')$. This time, $D_I$ consists of $q$ which have a factor 
  $r$ with $x^{1-\gamma-\delta^*(\gamma) -3\varepsilon'} < r < x^{1-\gamma - 3\varepsilon'} $. Thus Lemma~\ref{lem:partition1} applies to 
$ Q(x;\delta,\underline{A},\underline{B},\varepsilon,j,j',m,m',\varepsilon_0) $  provided that for  every  $(y_1, \dots, y_{m+m'}) \in \Xi(B_{j,m},B_{j',m'},m,m',\delta)$  there exists a partition $I_1 \cup I_2$ with  
\begin{align}
\sum_{i \in I_1} y_i \leq \tfrac{1}{2} -2 \omega_{\begin{scriptsize}
\mbox{max}\end{scriptsize}} -2\epsilon \quad \mbox{ and } \quad  \sum_{i \in I_2} y_i \leq \tfrac{1}{14}-\tfrac{68}{14}\omega_{\begin{scriptsize}
\mbox{max}
\end{scriptsize}}     -2\epsilon. 
\end{align}

\medskip
\underline{Case II}: Next consider $f(n;x) = (\alpha \star \beta)(n;x)$, where $\alpha$ is  coefficient sequence at scale $M$, $\beta$ is a  coefficient sequence at scale $N$ with $M(x) N(x) \asymp x$, and $x^{ \xi_2 -\epsilon  } \leq N(x)=x^\gamma \leq x^{ 1- \xi_2+\epsilon}$. Assume also that $\alpha$ and $\beta$ both have the Siegel--Walfisz property.    By symmetry, we only need to consider $N(x) \leq x^{1/2}$. 

\medskip

We will now apply the equidistribution estimates given in Lemma~\ref{lem:typeIIPoly}, Lemma~\ref{lem:typeI(ii)Poly} and Lemma~\ref{lem:typeIS}.

\medskip
\underline{Case IIa}:  We begin with Lemma~\ref{lem:typeIIPoly} and set 
$\delta^*(\gamma) =  \frac{5\gamma}{7}-\frac{2}{7}  -\frac{24\omega_{\begin{scriptsize}
\mbox{max}\end{scriptsize}}}{7} -\epsilon$. This choice ensures that 
 \begin{align*}
&24\omega_{\begin{scriptsize}
\mbox{max}\end{scriptsize}} + 7\delta^*(\gamma) -5\gamma <-2, \\
&8\omega_{\begin{scriptsize}
\mbox{max}\end{scriptsize}} + 3\delta^*(\gamma) -\gamma <0,
\end{align*}
so that we can apply Lemma~\ref{lem:typeIIPoly} with $(\omega_{\begin{scriptsize}
\mbox{max}
\end{scriptsize}}, \gamma, \delta^*(\gamma))$ in the place of $(\omega, \gamma, \delta)$. We get that $f$ has good equidistribution properties for moduli in $D_{IIa}(x;\omega_{\begin{scriptsize}
\mbox{max}
\end{scriptsize}},\gamma,\delta^*(\gamma);\varepsilon')$, provided $\varepsilon'$ is very small. Now we   need to show that   $ Q(x;\delta,\underline{A},\underline{B},\varepsilon,j,j',m,m',\varepsilon_0) \subseteq D_{IIa}(x;\omega_{\begin{scriptsize}
\mbox{max}
\end{scriptsize}},\gamma,\delta^*(\gamma);\varepsilon')$. To apply Lemma~\ref{lem:partition1}, we need to restrict to $\gamma$ with 
$
\delta^*(\gamma) =  \frac{5\gamma}{7}-\frac{2}{7}  -\frac{24\omega_{\begin{scriptsize}
\mbox{max}\end{scriptsize}}}{7} -\epsilon \geq \delta.
$
This means for now we only consider $$\gamma \geq  \tfrac{2}{5}+\tfrac{24\omega_{\begin{scriptsize}
\mbox{max}\end{scriptsize}}}{5} +\tfrac{7\delta}{5}  +2\epsilon.$$

\bigskip
We then get that any $q \in Q(x;\delta,\underline{A},\underline{B},\varepsilon,j,j',m,m',\varepsilon_0)$ has a divisor $r \mid q$ with  $   x^{\gamma -3\varepsilon-\delta^*(\gamma)} < r < x^{\gamma - 3\varepsilon}$ provided that  for  every  $(y_1, \dots, y_{m+m'}) \in \Xi(B_{j,m},B_{j',m'},m,m',\delta)$  there exists a partition $I_1 \cup I_2$ with  
\begin{align}
\sum_{i \in I_1} y_i \leq \tfrac{2}{5}+\tfrac{24\omega_{\begin{scriptsize}
\mbox{max}\end{scriptsize}}}{5} +\tfrac{7\delta}{5}   -2 \epsilon \quad \mbox{ and } \quad  \sum_{i \in I_2} y_i \leq \tfrac{1}{14}     -\tfrac{24\omega_{\begin{scriptsize}
\mbox{max}\end{scriptsize}}}{7} -2\epsilon.
\end{align}
 
\medskip
\underline{Case IIb}: 
Now we consider $\gamma <  \tfrac{2}{5}+\tfrac{24\omega_{\begin{scriptsize}
\mbox{max}\end{scriptsize}}}{5} +\tfrac{7\delta}{5}  +2\epsilon$ and  use  Lemma~\ref{lem:typeI(ii)Poly}. Here we instead choose  
$\delta^*(\gamma) =  \frac{3\gamma}{7}-\frac{1}{7}  -\frac{24\omega_{\begin{scriptsize}
\mbox{max}\end{scriptsize}}}{7} -\epsilon$, which ensures that 
 \begin{align*}
&24\omega_{\begin{scriptsize}
\mbox{max}\end{scriptsize}} + 7\delta^*(\gamma) -3\gamma <-1, \\
&8\omega_{\begin{scriptsize}
\mbox{max}\end{scriptsize}} + 3\delta^*(\gamma) -\gamma <0.
\end{align*}
Hence we can apply Lemma~\ref{lem:typeI(ii)Poly} with $(\omega_{\begin{scriptsize}
\mbox{max}
\end{scriptsize}}, \gamma, \delta^*(\gamma))$ and get that $f$ has good equidistribution properties for moduli in $D_{IIb}(x;\omega_{\begin{scriptsize}
\mbox{max}
\end{scriptsize}},\gamma,\delta^*(\gamma);\varepsilon')$. To show    $ Q(x;\delta,\underline{A},\underline{B},\varepsilon,j,j',m,m',\varepsilon_0) \subseteq D_{IIb}(x;\omega_{\begin{scriptsize}
\mbox{max}
\end{scriptsize}},\gamma,\delta^*(\gamma);\varepsilon')$, we  apply Lemma~\ref{lem:partition2}. However, we need to restrict to $\gamma$ with 
$
\delta^*(\gamma) =   \frac{3\gamma}{7}-\frac{1}{7}  -\frac{24\omega_{\begin{scriptsize}
\mbox{max}\end{scriptsize}}}{7}  - \epsilon \geq \delta$, so   have $$   \tfrac{1}{3}+\tfrac{24\omega_{\begin{scriptsize}
\mbox{max}\end{scriptsize}}}{3} +\tfrac{7\delta}{3}  +3\epsilon< \gamma <   \tfrac{2}{5}+\tfrac{24\omega_{\begin{scriptsize}
\mbox{max}\end{scriptsize}}}{5} +\tfrac{7\delta}{5}  +2\epsilon $$
Recall now that
\begin{align*}
D_{IIb}(x;\omega,\gamma,\delta;\varepsilon) = \left\{d \in  [1, x^{1/2+2\omega}]\cap \mathbb{N}:  \begin{array}{l} 
\exists r \mid d \,\, \exists u \mid \frac{d}{r}  \mbox{ with } x^{\gamma -3\varepsilon-\delta} < r < x^{\gamma - 3\varepsilon} \\ \mbox{and } x^{1/2 -\gamma-2\omega -6 \varepsilon-\delta } < u < x^{1/2-\gamma-2\omega  -6\varepsilon}
\end{array}\right\}.
\end{align*}
Any $q \in Q(x;\delta,\underline{A},\underline{B},\varepsilon,j,j',m,m',\varepsilon_0)$ has a divisor $r \mid q$  and a divisor $u \mid \frac{q}{r}$ with  $   x^{\gamma -3\varepsilon-\delta^*(\gamma)} < r < x^{\gamma - 3\varepsilon}$ and $x^{1/2 -\gamma-2\omega_{\begin{scriptsize}
\mbox{max}\end{scriptsize}} -6 \varepsilon-\delta^*(\gamma) } < u < x^{1/2-\gamma-2\omega_{\begin{scriptsize}
\mbox{max}\end{scriptsize}} -6\varepsilon}$
provided that the following is true: For  every  $(y_1, \dots, y_{m+m'}) \in \Xi(B_{j,m},B_{j',m'},m,m',\delta)$  there exists a partition $I_1 \cup I_2 \cup I_3$ with  
\begin{align*}
 \sum_{i \in I_1} y_i \leq \gamma-2\epsilon, \quad  \sum_{i \in I_2} y_i \leq  1/2 -\gamma-2\omega_{\begin{scriptsize}
\mbox{max}\end{scriptsize}} -2 \epsilon 
\quad \mbox{ and } \quad  \sum_{i \in I_3} y_i \leq         \frac{3\gamma}{7}-\frac{1}{7}  -\frac{10\omega_{\begin{scriptsize}
\mbox{max}\end{scriptsize}}}{7}  - 2\epsilon .
\end{align*}
 Of course we could simply check this condition for every $\gamma$ in the relevant range, but Case IIb generally does not correspond to the most critical condition (Case IIc  does), and so we  simplify the above condition to the following criterion: There exists a partition $I_1 \cup I_2 \cup I_3$ with  
 \begin{align}
 &\sum_{i \in I_1} y_i \leq  \tfrac{1}{3}+\tfrac{24\omega_{\begin{scriptsize}
\mbox{max}\end{scriptsize}}}{3} +\tfrac{7\delta}{3}   -4\epsilon, \\  &\sum_{i \in I_2} y_i \leq    
  \tfrac{1}{10}-\tfrac{34\omega_{\begin{scriptsize}
\mbox{max}\end{scriptsize}}}{5} -\tfrac{7\delta}{5}    
 -4 \epsilon,  \nonumber  \\
&\sum_{i \in I_3} y_i \leq            \tfrac{1}{35}+\tfrac{22\omega_{\begin{scriptsize}
\mbox{max}\end{scriptsize}}}{35} +\tfrac{21\delta}{35}     - 4\epsilon . \nonumber
\end{align}

\underline{Case IIc}:  Finally, we use  Lemma~\ref{lem:typeIS} to treat the case $\xi_2 -\epsilon \leq \gamma \leq  \tfrac{1}{3}+\tfrac{24\omega_{\begin{scriptsize}
\mbox{max}\end{scriptsize}}}{3} +\tfrac{7\delta}{3}  +3\epsilon$.  
Here we will work with a function $\delta^*(\gamma)$ which satisfies
$$\delta^*(\gamma) \leq  \min\left\{\frac{1}{4}-2\omega_{\begin{scriptsize}
\mbox{max}\end{scriptsize}} -\frac{\gamma}{2} , \, \frac{\gamma}{10} -\frac{32\omega_{\begin{scriptsize}
\mbox{max}\end{scriptsize}}}{10}  , \,
\frac{\gamma}{4} -\frac{1}{16} -3\omega_{\begin{scriptsize}
\mbox{max}\end{scriptsize}} \right\} -\epsilon.$$
This ensures that we have the following inequalities:
\begin{align*}
&8\omega_{\begin{scriptsize}
\mbox{max}\end{scriptsize}} +4\delta^*(\gamma) + 2\gamma < 1, \\
&32\omega_{\begin{scriptsize}
\mbox{max}\end{scriptsize}}  + 10 \delta^*(\gamma) -\gamma<0, \\
&48\omega_{\begin{scriptsize}
\mbox{max}\end{scriptsize}}  +  16 \delta^*(\gamma)  -4\gamma  < -1.
\end{align*}
Hence we can apply Lemma~\ref{lem:typeIS} with $(\omega_{\begin{scriptsize}
\mbox{max}
\end{scriptsize}}, \gamma, \delta^*(\gamma))$ and get that $f$ has good equidistribution properties for moduli in $D_{IIc}(x;\omega_{\begin{scriptsize}
\mbox{max}
\end{scriptsize}},\gamma,\delta^*(\gamma);\varepsilon')$. Recall here that 
 \begin{align*}
D_{IIc}(x;\omega,\gamma,\delta;\varepsilon) = \left\{d \in  [1, x^{1/2+2\omega}]\cap \mathbb{N}:  \begin{array}{l} 
\exists r \mid d \,\, \exists u \mid \frac{d}{r} \,\, \exists d_1 \mid r \mbox{ with } x^{\gamma -3\varepsilon-\delta} < r < x^{\gamma - 3\varepsilon} \\ \mbox{and }    x^{1-\gamma-6\varepsilon-\delta}   d^{-1} < u < x^{1-\gamma-6\varepsilon}   d^{-1} \\ 
\mbox{and }  r^2 x^{2-\gamma-52\varepsilon -\delta}  d^{-4} < d_1 <  r^2 x^{2-\gamma-52\varepsilon}  d^{-4}
\end{array}\right\}.
\end{align*}

We now  ask if the given upper bound on $\delta^*(\gamma)$ is sufficiently large to allow $\delta^*(\gamma) \geq \delta$ for $\xi_2 -\epsilon \leq \gamma \leq  \tfrac{1}{3}+\tfrac{24\omega_{\begin{scriptsize}
\mbox{max}\end{scriptsize}}}{3} +\tfrac{7\delta}{3}  +3\epsilon$. On the one hand, we need
\begin{align*}
\tfrac{1}{4}-2\omega_{\begin{scriptsize}
\mbox{max}\end{scriptsize}} -\tfrac{1}{2} \left( \tfrac{1}{3}+\tfrac{24\omega_{\begin{scriptsize}
\mbox{max}\end{scriptsize}}}{3} +\tfrac{7\delta}{3}  +3\epsilon\right)  \geq \delta,
\end{align*}
which is implied by  
\begin{align*}
\tfrac{19}{2}-36 A_n   - 13\delta   +100\epsilon  \geq 0.
\end{align*}
 On the other hand, we also need that 
 $$  \min\left\{  \, \tfrac{\xi_2}{10} -\tfrac{32\omega_{\begin{scriptsize}
\mbox{max}\end{scriptsize}}}{10}  , \,
\tfrac{\xi_2}{4} -\tfrac{1}{16} -3\omega_{\begin{scriptsize}
\mbox{max}\end{scriptsize}} \right\} -2\epsilon \geq \delta.$$ Recalling that $\omega_{\begin{scriptsize}
\mbox{max}\end{scriptsize}} \leq A_n -\frac{1}{4}$, it suffices to have 
 $$  \min\left\{  \, \tfrac{\xi_2}{10} -\tfrac{32A_n}{10} + \tfrac{8}{10}  , \,
\tfrac{\xi_2}{4} +\tfrac{11}{16} -3A_n  \right\} -2\epsilon \geq \delta.$$
 These two criteria form condition (II) of Proposition~\ref{prop:tupleconditions}.  Hence we can work with any $\delta^*(\gamma)$ which satisfies
 \begin{align*}
\delta \leq \delta^*(\gamma) \leq  \min\left\{\frac{1}{4}-2\omega_{\begin{scriptsize}
\mbox{max}\end{scriptsize}} -\frac{\gamma}{2} , \, \frac{\gamma}{10} -\frac{32\omega_{\begin{scriptsize}
\mbox{max}\end{scriptsize}}}{10}  , \,
\frac{\gamma}{4} -\frac{1}{16} -3\omega_{\begin{scriptsize}
\mbox{max}\end{scriptsize}} \right\} -\epsilon.
\end{align*}  
 
  Now we can apply Lemma~\ref{lem:partition3} to show that $  Q^*(x;\delta,\underline{A},\underline{B},\varepsilon,\varepsilon_0) \subseteq  D_{IIc}(x;\omega_{\begin{scriptsize}
\mbox{max}
\end{scriptsize}},\gamma,\delta^*(\gamma);\varepsilon')$.   According to Lemma~\ref{lem:partition3}, every $q \in Q^*(x;\delta,\underline{A},\underline{B},\varepsilon,\varepsilon_0)$ has a suitable factorization provided the following is true:
For every  $\omega_0 \in [ -\varepsilon_0, \omega_{\begin{scriptsize}
\mbox{max}\end{scriptsize}}]$  and every  tuple $(y_1, \dots, y_{m+m'}) \in \Xi(B_{j,m},B_{j',m'},m,m',\delta)$  there exists a   partition $I_1 \cup I_2 \cup I_3 \cup I_4 $ of $\{1, \dots, m+m'\}$ which satisfies inequalities
\begin{align*}
&\sum_{i \in I_1} y_i \leq   \gamma-2\delta^*(\gamma)      -    8\omega_0    -\epsilon, \\
  &\sum_{i \in I_2} y_i \leq  \frac{1}{2}-\gamma- 2\omega_0 -\epsilon, \\
   &\sum_{i \in I_3} y_i \leq  4\omega_0   +\delta^*(\gamma)   -\epsilon ,\\
    &\sum_{i \in I_4} y_i \leq   8\omega_0.
\end{align*} 
Looking at these conditions, it is actually often better to work with a smaller $\delta$. Since $\delta^*(\gamma) \geq \delta$, we hence settle on the following criterion:
\begin{align*}
&\sum_{i \in I_1} y_i \leq   \gamma-2\delta       -    8\omega_0    -\epsilon, \\
  &\sum_{i \in I_2} y_i \leq  \frac{1}{2}-\gamma- 2\omega_0 -\epsilon, \\
   &\sum_{i \in I_3} y_i \leq  4\omega_0   +\delta    -\epsilon ,\\
    &\sum_{i \in I_4} y_i \leq   8\omega_0.
\end{align*} 

\medskip
\underline{Case III}: Finally, consider $f(n;x) = (\alpha \star \psi_1 \star \psi_2 \star \psi_3)(n;x)$, where $\alpha$ is a coefficient sequence at scale $M$ and $\psi_1$, $\psi_2$ and $\psi_3$ are   smooth  coefficient sequences at scales $N_1$, $N_2$ and $N_3$ with $M(x) N_1(x) N_2(x) N_3(x) \asymp x$.   Additionally, $x^{1-2\xi_3-\epsilon} \leq N_1(x), N_2(x), N_3(x) \leq x^{\xi_3+\epsilon}$ and $N_i(x)N_j(x) \geq x^{1-\xi_3-\epsilon}$ for $i \neq j$.

We define $\delta^* =  \frac{1}{2}-\frac{7}{2}\omega_{\begin{scriptsize}
\mbox{max}
\end{scriptsize}} -\frac{9}{8}\xi_3   -\epsilon$, so that 
\begin{align*}
28\omega_{\begin{scriptsize}
\mbox{max}
\end{scriptsize}}   +9\xi_3   +8\delta^*   <4. 
\end{align*}
The condition $ \frac{11}{8}-\frac{7}{2}
 A_n -\frac{9}{8}\xi_3   -2\epsilon > \delta$ ensures  $\delta^* >\delta$.  
  Now we   apply Lemma~\ref{lem:typeIII} with $(\omega_{\begin{scriptsize}
\mbox{max}
\end{scriptsize}}, \gamma, \delta^*)$ in the place of $(\omega, \gamma, \delta)$. We get that any $f(n;x)$ of Type III has good equidistribution properties for moduli in  $D_{III}(x;\omega_{\begin{scriptsize}
\mbox{max}
\end{scriptsize}},\gamma,\delta^* ;\varepsilon')$. 
To obtain $$ Q(x;\delta,\underline{A},\underline{B},\varepsilon,j,j',m,m',\varepsilon_0) \subseteq   D_{III}(x;\omega_{\begin{scriptsize}
\mbox{max}
\end{scriptsize}},\gamma,\delta^* ;\varepsilon')$$ we use once more Lemma~\ref{lem:partition1}.  We need to show that elements $q$ of $Q$  have a factor $r \mid q$ with 
$$x^{1/3+4\delta^*/3 -4\omega_{\begin{scriptsize}
\mbox{max}
\end{scriptsize}}/3-\delta^*} < r < x^{1/3+4\delta^*/3 -4\omega_{\begin{scriptsize}
\mbox{max}
\end{scriptsize}}/3}.$$ 
This is guaranteed if  every  $(y_1, \dots, y_{m+m'}) \in \Xi(B_{j,m},B_{j',m'},m,m',\delta)$ has a partition $I_1 \cup I_2$  with  
\begin{align*}
\sum_{i \in I_1} y_i \leq  \frac{1}{3}+\frac{4\delta^*}{3} -\frac{4\omega_{\begin{scriptsize}
\mbox{max}
\end{scriptsize}}}{3} \quad \mbox{ and } \quad  \sum_{i \in I_2} y_i \leq  \frac{1}{6}-\frac{\delta^*}{3} +\frac{4\omega_{\begin{scriptsize}
\mbox{max}
\end{scriptsize}}}{3}  . 
\end{align*} 
Plugging in $\delta^* =  \frac{1}{2}-\frac{7}{2}\omega_{\begin{scriptsize}
\mbox{max}
\end{scriptsize}} -\frac{9}{8}\xi_3   -\epsilon$, this is implied by
\begin{align}
\sum_{i \in I_1} y_i \leq  1- 6\omega_{\begin{scriptsize}
\mbox{max}
\end{scriptsize}} -\frac{3\xi_3}{2}-2\epsilon \quad \mbox{ and } \quad  \sum_{i \in I_2} y_i \leq    \frac{5\omega_{\begin{scriptsize}
\mbox{max}
\end{scriptsize}}}{2} +\frac{3 \xi_3}{8}  -2\epsilon. 
\end{align} 
This concludes the proof.
\end{proof}

The statement of Proposition~\ref{prop:tupleconditions} may look a bit complicated at first, but it is extremely useful for identifying suitable choices of $B_{j,m}$ for given $\underline{A}$ and $\delta$. For instance, one may choose some small constant $c$ and sample tuples in $\Xi(B_1,B_2,m_1,m_2,\delta)$ by considering $(y_1, \dots, y_{m_1+m_2})$ with $y_i \in c\mathbb{N}$. Using a computer, it is then easy to check if there is a partition $I_1 \cup I_2 \cup I_3 \cup I_4$ so that $(y_1, \dots, y_{m_1+m_2})$ satisfies $\sum_{i \in I_j} y_i < C_j$ for $j=1,2,3,4$. Choosing constants $C_1$, $C_2$, $C_3$ and $C_4$ according to the partitions listed in Proposition~\ref{prop:tupleconditions}, but leaving room to spare to account for an error   $(m_1+m_2) c$, one can then find   $B_1$ and $B_2$ with the desired partition properties. Of course, the smaller the choice of $c$, the longer this computation takes, but at least for small $m$ and $m'$ it is not difficult to find near optimal values for $B_{j,m}$ by using this approach. However, to prove $H_1 \leq 240$, such a brute force computation is actually not necessary and we will instead pick $B_{j,m}$ for which the conditions of  Proposition~\ref{prop:tupleconditions} can easily be verified by hand.

\section{Integrals via matrix multiplication}\label{sec:code}

In the previous sections we discussed the theoretical framework of our proof: we proved a variant of the GPY sieve for the support $T_k(\delta,\underline{A},\underline{B},\varepsilon)$, derived useful equidistribution lemmas and constructed a  minorant for the prime indicator function. However, once we have chosen $\delta$, $\underline{A}$, $\underline{B}$ and $\varepsilon$, one major obstacle remains: we need to solve the optimization problem described in Proposition~\ref{prop:GPYsieve} and find a function $F$ which makes the following quantity large:
\begin{align*} 
\frac{k(1-c_1)  J(F;\delta,\underline{A},\underline{B},\varepsilon)-k c_2   K(F;\delta,\underline{A},\underline{B},\varepsilon)}{I(F;\delta,\underline{A},\underline{B},\varepsilon)} > 1.
\end{align*}

\subsection{A basis}  To find a good function $F$, we     first  choose some symmetric functions $G_1, \dots, G_l$ as a basis and consider linear combinations $F=c_1 G_1 + \dots + c_l G_l$. We set $\underline{c}=(c_1,\dots,c_l)$ and define matrices $\mathbf{M_1}$ and $\mathbf{M_2}$ as follows: 
\begin{align*}
(M_1)_{i,j}&=\mathop{\int \dots \int}_{T_k(\delta,\underline{A},\underline{B},\varepsilon)} G_i(t_1, \dots, t_k)G_j(t_1, \dots, t_k) \mbox{d}t_1 \dots \mbox{d}t_k, \\
(M_2)_{i,j}&= k (1-c_1) \sum_{m=1}^n\sum_{m'=1}^n\mathop{\int \dots \int}_{\substack{t_1+\dots+t_{i-1}+t_{i+1}+\dots+t_k \leq \max\{A_m-\varepsilon,A_{m'}-\varepsilon\} \\ t_1+\dots+t_{i-1}+t_i+t_{i+1}+\dots+t_k \in [A_{m-1}+\varepsilon,A_m+\varepsilon] \\  t_1+\dots+t_{i-1}+t'_i+t_{i+1}+\dots+t_k \in [A_{m'-1}+\varepsilon,A_{m'}+\varepsilon]  }} \!\!\!\!\!\!\!\!\!\!\!\!\!\!\!\!\!\!\!\!\!\!\!\!\! G_i(t_1,\dots, t_{k-1}, t_k)  G_j(t_1, \dots, t_{k-1}, t'_k)\mbox{d}t_k\mbox{d}t'_k \mbox{d}t_1 \dots \mbox{d}t_{k-1} \\
&-kc_2 \sum_{m=1}^n\sum_{m'=1}^n\mathop{\int \dots \int}_{\substack{ t_1 + \dots + t_{k-1} > \max\{A_m-\varepsilon,A_{m'}-\varepsilon\}\\ t_1+\dots+t_{k-1}+t_k \in [A_{m-1}+\varepsilon,A_m+\varepsilon] \\  t_1+\dots + t_{k-1}+t'_k \in [A_{m'-1}+\varepsilon,A_{m'}+\varepsilon]  }} G_i(t_1, \dots, t_k)G_j(t_1, \dots, t_k) \mbox{d}t_1 \dots \mbox{d}t_k
\end{align*}
This gives us the following identity:
\begin{align}\label{equ:matrixratio}
\frac{\underline{c} \, \mathbf{M_2} \,  \underline{c}^T}{\underline{c} \, \mathbf{M_1} \, \underline{c}^T} = \frac{k(1-c_1)  J(F;\delta,\underline{A},\underline{B},\varepsilon)-k c_2   K(F;\delta,\underline{A},\underline{B},\varepsilon)}{I(F;\delta,\underline{A},\underline{B},\varepsilon)} .
\end{align}
To find the optimal choice for $\underline{c}$, Maynard~\cite{Maynard:2015:SGP} and  Polymath~\cite{Polymath:2014:VSS} computed the largest eigenvalue of $\mathbf{M_2} \mathbf{M_1}^{-1}$, and chose  $\underline{c}$ to be a corresponding eigenvector. While our new choice of $\mathbf{M_1}$ and $\mathbf{M_2}$ may not be positive definite, we nonetheless follow the same approach: We compute exact values for the entries of $\mathbf{M_1}$ and $\mathbf{M_2}$, and then approximate numerically to obtain eigenvalues and eigenvectors. Choosing an eigenvector, we then approximate it via rational numbers, obtaining a good choice for $\underline{c}$. We then compute the ratio~(\ref{equ:matrixratio}) exactly, and if the result is greater than $1$, we have shown that $H_1 \leq H(k)$, where $H(k)$ is the length of the narrowest admissible $k$-tuple.

\medskip 

Regarding the choice of basis $G_1, \dots, G_l$, Polymath considered the set of symmetric polynomials   $$ \mathcal{B}_D=\left\{   p(t_1, \dots, t_k)^2 (1-t_1-\dots-t_k)^b :  p(t_1, \dots, t_k) \mbox{ symmetric polynomial}, \mbox{deg}(p)=a,   2a+b \leq D \right\}.$$ In particular, $H_1 \leq 246$ was obtained by optimization over  basis $\mathcal{B}_{27}$. To obtain $H_1 \leq 240$, we  work with the smaller basis $\mathcal{B}_{19}$, which indicates that our method gives quite a large improvement.
   
\subsection{Integration via recursion and matrix multiplication} 
  
To find eigenvectors of  $\mathbf{M_2} \mathbf{M_1}^{-1}$, one  must of course first   calculate the entries of matrices $\mathbf{M_1}$ and $\mathbf{M_2}$. We need to obtain exact integrals as even minor errors in the matrix entries quickly lead to incorrect results. However, trying to use Mathematica to integrate a polynomial in $49$ variables over a region like $T_k(\delta,\underline{A},\underline{B},\varepsilon)$ seems pretty hopeless: even for   polynomials  in 10 variables a computation can already take 10 minutes or more, and the duration appears to increase exponentially, so that we would never get any result for $49$ variables. Therefore we need to find a better way to integrate polynomials over $T_k(\delta,\underline{A},\underline{B},\varepsilon)$. We have carefully chosen the region to make it possible to completely rewrite integration   in terms of recursion and matrix multiplication. Below we will briefly explain the key ideas behind this process.  

\subsubsection{Building blocks}
   We begin by  considering much simpler sets 
 \begin{align*} 
 &T_s(k)=\{(t_1, \dots, t_k) \in [0,\delta]^k: t_1+\dots+t_k \leq 1\}, \\ 
 &T_b(k)=\{(t_1, \dots, t_k) \in [\delta,1]^k: t_1+\dots+t_k \leq 1\} 
  \end{align*} 
 and polynomials $ t_1^{a_1} \dots t_k^{a_k} (1-t_1-\dots-t_k)^b$.  If one naively  expands $(1-t_1-\dots-t_k)^b$ and integrates $ t_1^{a_1} \dots t_k^{a_k} (1-t_1-\dots-t_k)^b$ by hand, then the condition $t_1+\dots+t_k \leq 1$ has the effect that each evaluation of $\int\mbox{d}t_i$ increases the polynomial's degree by $1$. So we would quickly get an unmanageable number of different polynomials that need to be considered. To avoid such issues,   we use the following two tricks: 

\medskip  
 First, we note that the integral of $ t_1^{a_1} \dots t_k^{a_k} (1-t_1-\dots-t_k)^b$ over $T_s(k)$  or $T_b(k)$ can itself be expressed (piecewise) as a polynomial in $\delta$. Its degree is at most $k+a_1+\dots+a_k+b$. The coefficients of this polynomial only depend on the value of $\lfloor \frac{1}{\delta} \rfloor$. We may write
 \begin{align*}
 &\mathop{\int \dots \int }_{T_s (k)}    t_1^{a_1} \dots t_k^{a_k} (1-t_1-\dots-t_k)^b \mbox{d}t_1 \dots  \mbox{d}t_k = \sum_{i=0}^{k+a_1+\dots+a_k+b} C_{m,i}(k,\underline{a},b) \,\delta^i \quad \mbox{ for }  \left\lfloor \frac{1}{\delta} \right\rfloor \in [m,m+1], \\ 
  &\mathop{\int \dots \int }_{T_b (k)}    t_1^{a_1} \dots t_k^{a_k} (1-t_1-\dots-t_k)^b \mbox{d}t_1 \dots  \mbox{d}t_k = \sum_{i=0}^{k+a_1+\dots+a_k+b} D_{m,i}(k,\underline{a},b) \,\delta^i \quad \mbox{ for }  \left\lfloor \frac{1}{\delta} \right\rfloor \in [m,m+1].
 \end{align*}
 The goal  now is to find the coefficients $C_{m,i}(k,\underline{a},b)$ and $D_{m,i}(k,\underline{a},b)$. 
 
\medskip  
  Secondly, when $a_k \neq 0$ or $b \neq 0$  
 we introduce additional integrals to replace $t_k^{a_k}$ and $(1-t_1-\dots-t_k)^b$. This trick gives us  a nice recursive definition for the coefficients $C_{m,i}(k,\underline{a},b)$ and $D_{m,i}(k,\underline{a},b)$. Here is a brief example:
 \begin{align*}
 &\sum_{i=0}^{k+a_1+\dots+a_k+b} C_{m,i}(k,\underline{a},b) \, \delta^i  =\mathop{\int_0^\delta \dots \int_0^\delta}_{t_1 + \dots + t_k \leq 1}   t_1^{a_1} \dots t_k^{a_k} (1-t_1-\dots-t_k)^b \mbox{d}t_1 \dots  \mbox{d}t_k \\
 &= 
 \mathop{\int_0^\delta \dots \int_0^\delta}_{t_1 + \dots + t_k \leq 1} \int_{0}^{(1-t_1-\dots-t_k)^b}   t_1^{a_1} \dots t_{k}^{a_{k}}  \, \mbox{d}s \, \mbox{d}t_1 \dots  \mbox{d}t_k\\
 &= 
\int_{0}^{1}  (1 -s^{1/b} )^{k+ a_1+\dots + a_k} \mathop{\int_0^{\frac{\delta}{1 -s^{1/b} }}  \dots \int_0^{\frac{\delta}{1 -s^{1/b} }}}_{\substack{ u_1 + \dots + u_k \leq 1    }}   u_1^{a_1} \dots u_{k}^{a_{k}} \, \mbox{d}u_1 \dots  \mbox{d}u_k \, \mbox{d}s \\
&= \sum_{l=0}^\infty 
\int_{\max\{0,(1-\delta (l+1))^b\}}^{(1-\delta l)^b}  (1 -s^{1/b} )^{k+ a_1+\dots + a_k}   \sum_{i=0}^{k+a_1+\dots+a_k+b} C_{l,i}(k,\underline{a},0)\left(\frac{\delta}{1-s^{1/b}}\right)^i\, \mbox{d}s.
 \end{align*}
 Evaluating the integral over $s$, we get another polynomial in $\delta$. Carefully comparing coefficients on both sides, we can then express $C_{m,i}(k,\underline{a},b)$ in terms of $C_{l,i}(k,\underline{a},0)$ for various $l$. Similarly, $C_{m,i}(k,\underline{a},0)$ can be expressed in terms of $C_{m,i}(k,(a_1, \dots, a_{k-1}),0)$, $D_{m,i}(k,(a_1, \dots, a_{k-1}),0)$, $C_{m-1,i}(k,(a_1, \dots, a_{k-1}),0)$ and $D_{m-1,i}(k,(a_1, \dots, a_{k-1}),0)$. This algorithm drastically speeds up integration over $T_s(k)$ for polynomials of the form   $t_1^{a_1} \dots t_k^{a_k} (1-t_1-\dots-t_k)^b $.

\subsubsection{More complicated integrals}

The integrals discussed in the previous section work as building blocks for the calculation of more complicated integrals.  For instance, we may now consider the region
\begin{align*}
T_{b,s}(k)=\{(u_1,\dots, u_r, v_1, \dots, v_s) \in [\delta,1]^r\times [0,\delta]^s: u_1 + \dots + u_r+ v_1 + \dots + v_s< A, v_1 + \dots + v_r < D\}.
\end{align*}
Integration of $  u_1^{a_1} \dots u_r^{a_r}  v_1^{b_1} \dots v_s^{b_s} (A-u_1-\dots-u_r-v_1 - \dots -v_s)^c$ over $T_{b,s}(k)$ can be expressed in terms of the coefficients $C_{m,i}(r,\underline{b},n)$ and $D_{m,i}(s,\underline{a},n)$:
\begin{align*}
&\mathop{\int \dots \int}_{T_{b,s}(k)}  u_1^{a_1} \dots u_r^{a_r}  v_1^{b_1} \dots v_s^{b_s} (A-u_1-\dots-u_r-v_1 - \dots -v_s)^c \,\mbox{d}u_1 \dots \mbox{d}u_r \mbox{d}v_1 \dots \mbox{d}v_s \\
&= \mathop{\int_\delta^1 \dots \int_\delta^1}_{A-D<u_1 + \dots + u_r < A}   u_1^{a_1} \dots u_r^{a_r}   \!\!\!\!\! \!\!\!\!\! 
\mathop{\int_0^\delta \dots \int_0^\delta}_{\substack{ v_1 + \dots + v_s < A-u_1-\dots - u_r }}   \!\!\!\!\! \!\!\!\!\!    
v_1^{b_1} \dots v_s^{b_s} (A-u_1-\dots-u_r-v_1 - \dots -v_s)^c \\
&+\mathop{\int_\delta^1 \dots \int_\delta^1}_{ u_1 + \dots + u_r \leq  A-D}   u_1^{a_1} \dots u_r^{a_r}  \sum_{n=0}^c \binom{c}{n} (A-D-u_1-\dots-u_r)^{c-n}  
\mathop{\int_0^\delta \dots \int_0^\delta}_{\substack{ v_1 + \dots + v_s < D }}         
v_1^{b_1} \dots v_s^{b_s} (D-v_1 - \dots -v_s)^n 
\\
&=\sum_{l=0}^\infty \!\!\!\!\! \!\!\!\!\! \mathop{\int_\delta^1 \dots \int_\delta^1}_{\max\{A-D,A-\delta l\}<u_1 + \dots + u_r < A-\delta(l+1)} \!\!\!\!\! \!\!\!\!\!\!\!\!\!\! \!\!\!\!\!\!\!\!\!\! \!\!\!\!\!  u_1^{a_1} \dots u_r^{a_r}    (A-u_1-\dots-u_r )^{s+b_1+\dots+b_k+c} \sum_{i=0}^{s+b_1+\dots+b_s+c} C_{l,i}(s,\underline{b},c)\left(\frac{\delta}{A-u_1-\dots-u_r}\right)^i  \\
&+\mathop{\int_\delta^1 \dots \int_\delta^1}_{ u_1 + \dots + u_r \leq  A-D}   u_1^{a_1} \dots u_r^{a_r}  \sum_{n=0}^c \binom{c}{n} (A-D-u_1-\dots-u_r)^{c-n}  D^{s+b_1+\dots+b_k+n}    \sum_{i=0}^{s+b_1+\dots+b_s+n} C_{\lfloor \frac{D}{\delta}\rfloor,i}(s,\underline{b},c)\left(\frac{\delta}{D}\right)^i.
\end{align*}
The computation above already replaced the integrals over $v_i < \delta$ by coefficients $C_{m,i}(s,\underline{b},n)$, and the remaining integrals over $u_i > \delta$ can similarly be replaced by $D_{m,i}(r,\underline{a},n)$. Overall, the entire integral can be expressed by multiplying a suitable matrix left and right by the coefficient vectors $C$ and $D$. 

\medskip The integral over $T_{b,s}(k)$ is itself  a  building block in our ultimate goal of integrating symmetric polynomials over  $T_k(\delta,\underline{A},\underline{B},\varepsilon)$.  My code  decomposes all such integrals into matrix multiplications and recursions.  The above discussion highlighted the key ideas behind my calculation process, but I will also upload my full code at a later time, when I have had time to insert more comments to make it reader friendly. 

\section{Final steps}\label{sec:proof}

To conclude the proof of $H_1 \leq 246$, we first briefly discuss our general strategy for proving bounds on $H_1$ via Proposition~\ref{prop:GPYsieve}, Proposition~\ref{prop:harman} and Proposition~\ref{prop:tupleconditions}.

\subsection{General Strategy} Recall that   $T_k(\delta,\underline{A},\underline{B},\varepsilon)$ is defined as follows:
\begin{align*}
T_k(\delta,\underline{A},\underline{B},\varepsilon)=\bigcup_{j=1}^n \left\{(t_1, \dots, t_k) \in [0,1]^k:     \sum_{i=1}^k t_i \in \left[A_{j-1}+\varepsilon,  A_j+\varepsilon \right) \mbox{ and }  \sum_{i \in I} t_i \leq B_{j,|I|} \mbox{ for }  I=\{i: t_i > \delta\} \right\}.
\end{align*}
We need to find good $\underline{A}$, $\underline{B}$, $\varepsilon$ and $\delta$. We start by   selecting some   vector $(A_1, \dots, A_n)$ and some $\delta >0$. 
We then take a look at Proposition~\ref{prop:tupleconditions} and choose parameters $(\xi_1, \xi_2, \xi_3)$ as follows:
\begin{align*}
&\xi_1 \approx 4A_n + \delta - \frac{2}{3}, \\
&\xi_2  \approx \max\left\{ \tfrac{4}{10}, \, 32A_n - 8 + 10 \delta, \, 12A_n - \tfrac{11}{4} + 4\delta \right\}, \\
&\xi_3 \approx \tfrac{11}{9} - \tfrac{28}{9} A_n - \tfrac{8}{9} \delta.
\end{align*}
Next we apply Proposition~\ref{prop:harman} to construct a minorant $\rho(n;x)$ for the prime indicator function. If the conditions on $\xi_1, \xi_2, \xi_3$ stated in Proposition~\ref{prop:harman} fail, or if $c_1$ is large, we need to decrease the size of $\delta$.

\medskip Having chosen $(\xi_1,\xi_2,\xi_3)$ and $\underline{A}$ and $\delta$, we then use Proposition~\ref{prop:tupleconditions} to find a suitable choice for $\underline{B}$, ideally with $B_{m,j}$ large. This determines the support $T_k(\delta,\underline{A},\underline{B},\varepsilon)$ and we can now compute matrices $\mathbf{M}_1$ and $\mathbf{M}_2$ and search for a good choice of vector $\underline{c}$. If we find $\underline{c}$ with $$\frac{\underline{c} \, \mathbf{M_2} \,  \underline{c}^T}{\underline{c} \, \mathbf{M_1} \, \underline{c}^T}>1,$$
then Proposition~\ref{prop:GPYsieve} tells us that $H_1 \leq H(k)$.

\subsection{Proof of Theorem~\ref{thm}}

Proposition~\ref{prop:GPYsieve}, Proposition~\ref{prop:harman} and Proposition~\ref{prop:tupleconditions} were all stated much more generally than is required for the proof of $H_1 \leq 240$. As will be discussed below, a quite simple choice of $T_k(\delta,\underline{A},\underline{B},\varepsilon)$ can be used to get this result, and we can work directly with $1_{\mathbb{P}}(n)$ rather than a minorant $\rho(n;x)$. We   do not need the full strength of Proposition~\ref{prop:tupleconditions} either. I developed these more general results with the intend of proving better bounds on $H_1$, but due to time limitations, this paper only proves the minimal possible improvement.

\begin{proof}[Proof of Theorem~\ref{thm}]
We choose $\varepsilon =  0.0075
$ and  $\underline{A} = (-\varepsilon, 0.253$) and $\delta = 0.028$.  Furthermore, we take $\underline{B} = (B_{1,1}, \dots, B_{1, \lfloor \frac{1}{\delta} \rfloor})$ with $B_{1,1}=B_{1,2} = 0.15$ and $B_{1,m} = 0.17$ for $m \geq 3$.

\medskip Next  we choose $\xi_1= 0.38 $  and $\xi_2=\xi_3= 0.4$. For this choice, conditions (I), (II) and (III) of Proposition~\ref{prop:tupleconditions} are satisfied. Furthermore, the upper bound on the sum over $I_1$ in each of the conditions (A), (B), (C) and (E) is greater than $0.34$. Since $B_{1,m}+B_{1,m'} \leq 0.34$, this means we can always simply choose $I_1 = \{1, \dots, m+m'\}$. In condition (D) we are done if we have $ \{1, \dots, m+m'\} = I_1 \cup I_2$ with bounds
\begin{align*}
&\sum_{i \in I_1} y_i \leq  0.32 \quad  \mbox{ and } \quad 
   \sum_{i \in I_2} y_i \leq  0.07.
\end{align*} 
 This is possible since $B_{1,m}+ B_{1,m'} \leq 0.32$ if $m\leq 2$ or $m' \leq 2$, and for tuples with $m\geq 3$ we must have a subsum between $0.02$ and $0.07$. Thus  by Proposition~\ref{prop:tupleconditions},  $f \in \mathcal{H}(\xi_1,\xi_2,\xi_3)$ have the desired equidistribution properties.   We also see that $(\xi_1,\xi_2,\xi_3)$ satisfies all   inequalities given in Proposition~\ref{prop:harman}, and this proposition tells us to use $\rho(n;x)=1_{\mathbb{P}}(n)$, which means $c_1=c_2=0$. We compute the relevant matrices $\mathbf{M}_1$ and $\mathbf{M}_2$ for $k=49$ and are able to obtain a vector $\underline{c}$ with 
  $$\frac{\underline{c} \, \mathbf{M_2} \,  \underline{c}^T}{\underline{c} \, \mathbf{M_1} \, \underline{c}^T}>1.$$
  Therefore  Proposition~\ref{prop:GPYsieve} tells us that $H_1 \leq 240$.
\end{proof}

\section*{Acknowledgements}

I would like to thank James Maynard,    Kannan Soundararajan, Kevin Ford and Jesse Thorner for their support and various helpful conversations  during different stages of this project. 

\bibliography{bibstad}
\bibliographystyle{acm} 

\end{document}